\def\Luoma#1{\uppercase\expandafter{\romannumeral#1}}
\def\luoma#1{\romannumeral#1}
\newtheorem{mythm}[subsection]{Theorem}
\newtheorem{mylem}[subsection]{Lemma}
\newtheorem{myprop}[subsection]{Proposition}
\newtheorem{mycor}[subsection]{Corollary}
\theoremstyle{definition}
\newtheorem{mydefn}[subsection]{Definition}
\theoremstyle{remark}
\newtheorem{myrem}[subsection]{Remark}
\numberwithin{equation}{subsection}
\newcommand{\bb}{\mathbb}
\newcommand{\ca}{\mathcal}
\newcommand{\ak}{\mathfrak}
\newcommand{\scr}{\mathscr}
\newcommand{\mrm}{\mathrm}
\newcommand{\op}{\operatorname}
\newcommand{\ho}{\op{Hom}}
\newcommand{\ke}{\op{Ker}}
\newcommand{\im}{\op{Im}}
\newcommand{\df}{\mrm{d}}
\newcommand{\iso}{\stackrel{\sim}{\longrightarrow}}
\newcommand{\plim}{\varprojlim}
\newcommand{\colim}{\mathop{\mathrm{colim}}}
\def\omhat#1#2{\widehat{\Omega}_{\ca{O}_{#1}}^1(\ca{O}_{#2})}
\title[Faltings Extension and Hodge-Tate Filtration for Abelian Varieties]{Faltings Extension and Hodge-Tate Filtration for Abelian Varieties over $p$-adic Local Fields with Imperfect Residue Fields}
\author{Tongmu He}
\address{Tongmu He, Institut des Hautes \'Etudes Scientifiques, 35 route de Chartres, 91440 Bures-sur-Yvette, France}
\email{hetm15@ihes.fr}
\begin{document}
\maketitle
\begin{abstract}
	Let $K$ be a complete discrete valuation field of characteristic $0$, with not necessarily perfect residue field of characteristic $p>0$. We define a Faltings extension of $\ca{O}_K$ over $\bb{Z}_p$, and we construct a Hodge-Tate filtration for abelian varieties over $K$ by generalizing Fontaine's construction \cite{fontaine1982formes} where he treated the perfect residue field case.
	
	\noindent
	\emph{2010 Mathematics Subject Classification.} Primary 14F30; Secondary 14G20, 14K15.
\end{abstract}

\footnotetext{This draft is going to appear in \emph{Canadian Mathematical Bulletin}. See \url{https://doi.org/10.4153/S0008439520000399}.}

\tableofcontents

\section{Introduction}
\subsection{}
Let $K$ be a complete discrete valuation field of characteristic $0$, with residue field $k$ of characteristic $p>0$. Let $\overline{K}$ be an algebraic closure of $K$, $G_K$ the Galois group of $\overline{K}$ over $K$, $C$ the $p$-adic completion of $\overline{K}$. We denote by $C(r)$ the $r$-th Tate twist. For an abelian variety $X$ over $K$, we denote its Tate module by $T_p(X)$. When $k$ is perfect and $X$ has good reduction, Tate \cite{tate1967p} constructed a canonical $G_K$-equivariant exact sequence
\begin{align}\label{eq:HTdecom-abvar}
\xymatrix{
	0\ar[r]& H^1(X,\ca{O}_X)\otimes_KC(1)\ar[r]&\ho_{\bb{Z}_p}(T_p(X),C(1))\ar[r]& H^0(X,\Omega_{X/K}^1)\otimes_KC\ar[r]&0.
}
\end{align}
In the same paper, Tate also computed the Galois cohomology groups of $C(r)$. He proved in particular that $H^1(G_K,C(r))=0$ for any $r\neq 0$, which implies that the sequence \eqref{eq:HTdecom-abvar} has a $G_K$-equivariant splitting, and that $H^0(G_K,C(r))=0$ for any $r\neq 0$, which implies that the splitting is unique. Tate conjectured that for any proper smooth scheme $X$ over $K$, there is a canonical $G_K$-equivariant decomposition (called the \emph{Hodge-Tate decomposition})
\begin{align}\label{eq:HTdecom}
H^n_{\textrm{\'et}}(X_{\overline{K}},\bb{Q}_p)\otimes_{\bb{Q}_p}C=\bigoplus\nolimits_{i=0}^n H^{i}(X,\Omega_{X/K}^{n-i})\otimes_K C(i-n).
\end{align}
Then subsequently Raynaud used the semistable reduction theorem to show that any abelian variety over $K$ admits a Hodge-Tate decomposition (\cite{sga7-1} \Luoma{9} 3.6, 5.6). Afterwards, Fontaine \cite{fontaine1982formes} gave a new proof for general abelian varieties. He constructed a canonical map $H^0(X,\Omega_{X/K}^1)\to \ho_{\bb{Z}_p[G_K]}(T_p(X),C(1))$, by computing $\Omega_{\ca{O}_{\overline{K}}/\ca{O}_K}^1$ and pulling back differentials. The conjecture of Tate was finally settled by Faltings \cite{faltings1988p,faltings2002almost} and Tsuji \cite{tsuji1999p,tsuji2002semi} independently.

When $k$ is not necessarily perfect, Hyodo proved that there is still an exact sequence \eqref{eq:HTdecom-abvar} for abelian varieties with good reduction, following the same argument as in \cite{tate1967p} (\cite{hyodo1986hodge} Remark 1). But the sequence does not split in general (\cite{hyodo1986hodge} Theorem 3). In this paper, we will construct the exact sequence \eqref{eq:HTdecom-abvar} for general abelian varieties by generalizing Fontaine's method to the imperfect residue field case. 

We remark that Scholze \cite{scholze2013perfectoid} has generalized the conjecture of Tate to any proper smooth rigid-analytic variety $X$ over $C$. He proved that there is a canonical filtration (called the \emph{Hodge-Tate filtration}) $\op{Fil}^\bullet$ on $H^{n}_{\textrm{\'et}}(X,\bb{Q}_p)\otimes_{\bb{Q}_p}C$, such that
\begin{align}\label{eq:HTfiltration}
\op{Fil}^i(H^{n}_{\textrm{\'et}}(X,\bb{Q}_p)\otimes_{\bb{Q}_p}C)/\op{Fil}^{i+1}(H^{n}_{\textrm{\'et}}(X,\bb{Q}_p)\otimes_{\bb{Q}_p}C)=H^i(X,\Omega_{X/C}^{n-i})\otimes_CC(i-n).
\end{align}

\subsection{}
For any abelian group $M$, we set
\begin{align}
T_p(M)=\ho_{\bb{Z}}(\bb{Z}[1/p]/\bb{Z},M),\ V_p(M)=\ho_{\bb{Z}}(\bb{Z}[1/p],M).
\end{align}

In section \ref{sec:falext}, we construct a Faltings extension of $\ca{O}_K$ over $\bb{Z}_p$. It is a canonical exact sequence of $C$-$G_K$-modules which splits as a sequence of $C$-modules (cf. \ref{thm:falext}),
\begin{align}\label{eq:intro-falext}
\xymatrix{
	0\ar[r]&C(1)\ar[r]^-{\iota}& V_p(\Omega_{\ca{O}_{\overline{K}}/\ca{O}_K}^1)\ar[r]^-{\nu}&C\otimes_{\ca{O}_C}(\ca{O}_{\overline{K}}\otimes_{\ca{O}_K} \Omega_{\ca{O}_{K}/\bb{Z}_p}^1)^\wedge\ar[r]&0,
}
\end{align}
where $(-)^\wedge$ denotes the $p$-adic completion.
Based on Hyodo's computation of Galois cohomology (cf. \ref{thm:hyodo-galcoh}), we will show that the connecting map of the above sequence
\begin{align}\label{eq:intro-connmap}
\delta: (C\otimes_{\ca{O}_C}(\ca{O}_{\overline{K}}\otimes_{\ca{O}_K} \Omega_{\ca{O}_{K}/\bb{Z}_p}^1)^\wedge)^{G_K}\longrightarrow H^1(G_K,C(1))
\end{align}
is an isomorphism (cf. \ref{cor:falext-conn}).

Following Fontaine, we deduce from the above sequence and its cohomological properties a canonical $K$-linear injective homomorphism (cf. \ref{thm:fon-inj})
\begin{align}\label{eq:intro-foninj}
\rho:H^0(X,\Omega_{X/K}^1)\longrightarrow \ho_{\bb{Z}_p[G_K]}(T_p(X),V_p(\Omega_{\ca{O}_{\overline{K}}/\ca{O}_K}^1)).
\end{align}
The arguments are essentially the same as in \cite{fontaine1982formes}.

Our main result can be stated as follows (cf. \ref{thm:HTfil}, \ref{ssec:7.5}, \ref{ssec:7.6}):
\begin{mythm}\label{thm:main}
	For any abelian variety $X$ over $K$, there is a canonical exact sequence of $C$-$G_K$-modules
	\begin{align}\label{eq:intro-HTfil}
	\xymatrix{
		0\ar[r]& H^1(X,\ca{O}_X)\otimes_KC(1)\ar[r]^-{\psi}&\ho_{\bb{Z}_p}(T_p(X),C(1))\ar[r]^-{\phi}& H^0(X,\Omega_{X/K}^1)\otimes_KC\ar[r]&0
	}
	\end{align}
	satisfying the following properties:
	\begin{enumerate}[\rm (i)]
		\item Any $C$-linear retraction of $\iota$ in \eqref{eq:intro-falext} induces a $C$-linear section of $\phi$. More precisely, we have a commutative diagram
		\begin{align}\label{diam:intro-HTcomp1}
		\xymatrix{
			\ho_{\bb{Z}_p}(T_p(X),C(1))\ar[r]^-{\phi}&H^0(X,\Omega_{X/K}^1)\otimes_K C\ar[d]^-{\rho}\\
			&\ho_{\bb{Z}_p}(T_p(X),V_p(\Omega_{\ca{O}_{\overline{K}}/\ca{O}_K}^1))\ar[lu]^-{\pi}
		}
		\end{align}
		where $\rho$ is induced by the map \eqref{eq:intro-foninj} and $\pi$ is induced by any retraction of $\iota$ in \eqref{eq:intro-falext}.
		\item The connecting map $\delta'$ associated to \eqref{eq:intro-HTfil} fits into a commutative diagram
		\begin{align}\label{diam:intro-HTcomp2}
		\xymatrix{
			H^0(X,\Omega_{X/K}^1)\ar[d]_-{\rho}\ar[r]^-{\delta'}&H^1(G_K,H^1(X,\ca{O}_X)\otimes_KC(1))\ar[d]\\
			\ho_{\bb{Z}_p}(T_p(X),V_p(\Omega_{\ca{O}_{\overline{K}}/\ca{O}_K}^1))\ar[r]^-{\pi'}&\ho_{\bb{Z}_p}(T_p(X),C\otimes_{\ca{O}_C}(\ca{O}_{\overline{K}}\otimes_{\ca{O}_K} \Omega_{\ca{O}_{K}/\bb{Z}_p}^1)^\wedge)
		}
		\end{align}
		where $\rho$ is the map \eqref{eq:intro-foninj}, $\pi'$ is induced by $-\nu$ of \eqref{eq:intro-falext}, and the unlabelled arrow is induced by $\delta^{-1}$ \eqref{eq:intro-connmap} and $\psi$ of \eqref{eq:intro-HTfil}.
	\end{enumerate}
\end{mythm}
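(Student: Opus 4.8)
The plan is to follow Fontaine's argument \cite{fontaine1982formes}: apply $\ho_{\bb{Z}_p}(T_p(X),-)$ to the Faltings extension \eqref{eq:intro-falext}, feed in Fontaine's map \eqref{eq:intro-foninj} (and its analogue for the dual abelian variety $X^\vee$), and use Hyodo's Galois cohomology (\ref{thm:hyodo-galcoh}) together with the isomorphism $\delta$ of \ref{cor:falext-conn} to rigidify all the choices. In the perfect residue field case $V_p(\Omega_{\ca{O}_{\overline{K}}/\ca{O}_K}^1)\cong C(1)$ and $\nu=0$, so the Faltings extension is here playing exactly the role Fontaine's computation of $\widehat{\Omega}_{\ca{O}_{\overline{K}}/\ca{O}_K}^1$ did there.

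First I apply $\ho_{\bb{Z}_p}(T_p(X),-)$ to \eqref{eq:intro-falext}; since $T_p(X)$ is free of finite rank over $\bb{Z}_p$ this functor is exact, so I get a short exact sequence of $C$-$G_K$-modules $0\to\ho_{\bb{Z}_p}(T_p(X),C(1))\xrightarrow{\iota_*}\ho_{\bb{Z}_p}(T_p(X),V_p(\Omega_{\ca{O}_{\overline{K}}/\ca{O}_K}^1))\xrightarrow{\nu_*}\ho_{\bb{Z}_p}(T_p(X),C\otimes_{\ca{O}_C}(\ca{O}_{\overline{K}}\otimes_{\ca{O}_K}\Omega_{\ca{O}_K/\bb{Z}_p}^1)^\wedge)\to 0$, which splits $C$-linearly, and a $C$-linear retraction $r$ of $\iota$ induces by post-composition a $C$-linear retraction $\pi:=r_*$ of $\iota_*$. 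Because $G_K$ acts trivially on $H^0(X,\Omega_{X/K}^1)$, the map \eqref{eq:intro-foninj} extends to a $C$-linear $G_K$-equivariant homomorphism $\rho\colon H^0(X,\Omega_{X/K}^1)\otimes_KC\to\ho_{\bb{Z}_p}(T_p(X),V_p(\Omega_{\ca{O}_{\overline{K}}/\ca{O}_K}^1))$, and it is injective because $W^{G_K}\otimes_KC\to W$ is injective for any $C$-representation $W$ of $G_K$. Note that $\nu_*\circ\rho\neq 0$ in general, for otherwise $\rho$ would factor $G_K$-equivariantly through $\iota_*$ and the sequence being built would split, contrary to \cite{hyodo1986hodge}.

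Next, using $\rho$ and the Faltings extension for $X^\vee$ and Poincar\'e duality, I would construct the inclusion $\psi\colon H^1(X,\ca{O}_X)\otimes_KC(1)\inj\ho_{\bb{Z}_p}(T_p(X),C(1))$ (after the identification $\ho_{\bb{Z}_p}(T_p(X),C(1))=H^1_{\textrm{\'et}}(X_{\overline{K}},\bb{Z}_p)\otimes_{\bb{Z}_p}C(1)$ this is the ``positive weight'' Hodge--Tate period), and let $\phi$ be the cokernel map onto $H^0(X,\Omega_{X/K}^1)\otimes_KC$. The two substantive points are: \textbf{(a)} each $\pi\circ\rho\colon H^0(X,\Omega_{X/K}^1)\otimes_KC\to\ho_{\bb{Z}_p}(T_p(X),C(1))$ is injective (equivalently, for $0\neq v$ the image of $\rho(v)$ in $V_p(\Omega_{\ca{O}_{\overline{K}}/\ca{O}_K}^1)$ lies in no $C$-linear complement of $\iota(C(1))$) and is a section of $\phi$, which gives property (i) and \eqref{diam:intro-HTcomp1}; and \textbf{(b)} the displayed sequence \eqref{eq:intro-HTfil} is exact with the stated graded pieces. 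For (a)--(b) I compare two retractions $r,r'$ of $\iota$: their difference kills $\iota(C(1))$ and so factors through $\nu$, whence $\pi\circ\rho$ and $\pi'\circ\rho$ differ by a map factoring through $\nu_*\circ\rho$, and the induced ambiguity is a $G_K$-equivariant $C$-linear map factoring through $\ho_{\bb{Z}_p[G_K]}(-,C(s))$ with $s\neq 0$, hence $0$ by Hyodo's $H^0(G_K,C(s))=0$; combined with the dimension count $g+g=2g=\mathrm{rank}_{\bb{Z}_p}T_p(X)$ and the isomorphism $\delta$ of \eqref{eq:intro-connmap}, this forces exactness, the identification of the graded pieces, the independence of everything from $r$, and the $G_K$-equivariance of $\phi$ and $\psi$.

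Finally, property (ii) I would prove by the Bockstein computation attached to \eqref{eq:intro-HTfil}: for $\omega\in H^0(X,\Omega_{X/K}^1)=(H^0(X,\Omega_{X/K}^1)\otimes_KC)^{G_K}$, the class $\delta'(\omega)$ is represented by $\sigma\mapsto(\sigma-1)\pi(\rho(\omega))$, and since $\rho$ is $G_K$-equivariant this equals $((\sigma-1)\pi)(\rho(\omega))$, which depends only on the failure of the retraction $r$ of $\iota$ to be $G_K$-equivariant — precisely the datum that $\delta$ of \ref{cor:falext-conn} converts into $H^1(G_K,C(1))$. Chasing this through the identifications of \eqref{diam:intro-HTcomp2} (the inclusion $\psi$, the isomorphism $\delta$, and $-\nu$) shows the cocycle is the image of $-\nu_*(\rho(\omega))$, which is the asserted commutativity; in particular $\delta'\neq 0$ in general, recovering the non-splitting of \cite{hyodo1986hodge}. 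I expect the main obstacle to be keeping all of these identifications mutually compatible — the Weil-pairing presentation of $\ho_{\bb{Z}_p}(T_p(X),C(1))$, Poincar\'e duality and Fontaine's map for $X^\vee$ for the Hodge piece, the isomorphism $\delta$, and the Tate twists and signs — so that one and the same pair $(\phi,\psi)$ makes $\pi\circ\rho$ a section of $\phi$ for \emph{every} retraction and makes both \eqref{diam:intro-HTcomp1} and \eqref{diam:intro-HTcomp2} commute; the rigidity that makes this possible is supplied entirely by $H^0(G_K,C(s))=0$ for $s\neq 0$ and the computation of $H^1(G_K,C(1))$ in \ref{thm:hyodo-galcoh} and \ref{cor:falext-conn}.
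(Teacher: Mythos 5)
Your overall strategy is the paper's own: apply $\ho_{\bb{Z}_p}(T_p(X),-)$ to the Faltings extension, feed in Fontaine's injection, use the dual abelian variety together with Hyodo's cohomology. But your central step (a)--(b) rests on a claim that is false, and this leaves a genuine gap. You assert that for two retractions $r,r'$ of $\iota$ the induced ambiguity in $\pi\circ\rho$ is ``a $G_K$-equivariant $C$-linear map \dots hence $0$ by $H^0(G_K,C(s))=0$.'' In fact $r-r'=s\circ\nu$ for a $C$-linear map $s$ that is \emph{not} $G_K$-equivariant (no equivariant retraction exists, precisely because $\delta$ in \eqref{eq:intro-connmap} is an isomorphism), so Hyodo's vanishing does not apply. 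Writing $E=\ho_{\bb{Z}_p}(T_p(X),C)$ and $E^G=\ho_{\bb{Z}_p[G_K]}(T_p(X),C)$, the difference $(\pi-\pi')\circ\rho$ takes values in $E^G(1)=E^G\otimes_KC(1)$ and is nonzero in general: $\pi\circ\rho$ genuinely depends on the retraction, so ``the independence of everything from $r$'' fails. The paper's construction is organized exactly around this point: the canonical object is not $\pi\circ\rho$ but its image $\tilde{\rho}$ in $(E(1)/E^{G}(1))^{G}$ (Lemma \ref{lem:projfoninj}), and property (i) holds for every retraction only because the ambiguity lands in $E^G(1)=\ke\phi$ --- which presupposes that $\phi$ has already been constructed with precisely that kernel.

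That construction is the second gap. You say the dimension count ``forces exactness,'' but the count only yields the multiset of weak Hodge--Tate weights $\{0^{(d)},1^{(d)}\}$ of $E(1)$ (weight $0$ with multiplicity $\geq d$ from the injectivity of $\tilde{\rho}$, weight $1$ from the same argument for the dual abelian variety and the Weil pairing identification of $\ho_{\bb{Z}_p}(T_p(X'),C)$ with $E(1)^*$). Promoting this to an actual subrepresentation $C(1)^{\oplus d}$ and quotient $C^{\oplus d}$, hence to the exact sequence \eqref{eq:intro-HTfil}, requires Propositions \ref{prop:C^s} and \ref{prop:repn-split}, whose proofs use the degree-one vanishing $H^1(G_K,C(-1))=0$ and the injectivity of $W^{G_K}\otimes_KC\to W$ --- inputs absent from your sketch. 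Likewise the injectivity of $\pi\circ\rho$ does not follow from that of $\rho\otimes_KC$, since $\pi$ is far from injective; it is a consequence of $\pi\circ\rho$ being a section of $\phi$, i.e.\ it is downstream of the construction you have not supplied. Your outline of property (ii) via the cocycle $\sigma\mapsto(\sigma-1)(\pi(\rho(\omega)))$ does match the computation in \ref{ssec:7.5}.
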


\begin{mycor}\label{cor:main}
	For any abelian variety $X$ over $K$,
	the sequence \eqref{eq:intro-HTfil} splits if and only if the image of $\rho$ \eqref{eq:intro-foninj} lies in $\ho_{\bb{Z}_p[G_K]}(T_p(X),C(1))$. In fact, when it splits, the splitting is unique.
\end{mycor}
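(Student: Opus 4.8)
The plan is to obtain the corollary as a formal consequence of Theorem~\ref{thm:main}, the only external input being Hyodo's computation of $H^0(G_K, C(r))$ (cf.\ \ref{thm:hyodo-galcoh})---namely $C^{G_K}=K$ and $H^0(G_K, C(r))=0$ for $r\neq 0$. Throughout, $\rho$ denotes the $K$-linear map \eqref{eq:intro-foninj} and $\rho_C\colon H^0(X,\Omega_{X/K}^1)\otimes_K C\to\ho_{\bb{Z}_p}(T_p(X),V_p(\Omega_{\ca{O}_{\overline{K}}/\ca{O}_K}^1))$ its $C$-linear extension, as in \eqref{diam:intro-HTcomp1}. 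Since $\iota$ realizes $C(1)$ as a $C$-$G_K$-submodule of $V_p(\Omega_{\ca{O}_{\overline{K}}/\ca{O}_K}^1)$, the condition that the image of $\rho$ lie in $\ho_{\bb{Z}_p[G_K]}(T_p(X),C(1))$ means that each $\rho(\omega)\colon T_p(X)\to V_p(\Omega_{\ca{O}_{\overline{K}}/\ca{O}_K}^1)$ factors through $\iota$; and ``splits'' means splitting as a sequence of $C$-$G_K$-modules.

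For the direction ``image of $\rho$ in $\ho_{\bb{Z}_p[G_K]}(T_p(X),C(1))$ $\Rightarrow$ \eqref{eq:intro-HTfil} splits'', I would start from the $C$-linear section $\pi\circ\rho_C$ of $\phi$ furnished by part~(i) of Theorem~\ref{thm:main}, where $\pi=r_*$ for some $C$-linear retraction $r$ of $\iota$, and check that the hypothesis forces it to be $G_K$-equivariant. The crucial observation is that when $\rho(\omega)=\iota\circ g_\omega$ (the map $g_\omega$ being automatically $G_K$-equivariant, as $\iota$ is an injective $G_K$-map and $\rho(\omega)$ is $G_K$-equivariant), one has $\pi(\rho(\omega))=r\circ\iota\circ g_\omega=g_\omega$, \emph{independently of the chosen retraction $r$}. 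Hence $\pi\circ\rho$ takes values in the $G_K$-invariant submodule $\ho_{\bb{Z}_p[G_K]}(T_p(X),C(1))$, so its $C$-linear extension $\pi\circ\rho_C$ is $G_K$-equivariant; being a section of $\phi$, it splits \eqref{eq:intro-HTfil} as $C$-$G_K$-modules.

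For the converse I would pass to the long exact cohomology sequence. A $C$-$G_K$-linear section of \eqref{eq:intro-HTfil} restricts to a section of $\phi$ on $G_K$-invariants---here $(H^0(X,\Omega_{X/K}^1)\otimes_K C)^{G_K}=H^0(X,\Omega_{X/K}^1)$ since $C^{G_K}=K$---so the connecting map $\delta'$ of part~(ii) vanishes. Feeding $\delta'=0$ into the commutative square \eqref{diam:intro-HTcomp2} gives $\pi'\circ\rho=0$, where $\pi'$ is post-composition with $-\nu$. Since $T_p(X)$ is finite free over $\bb{Z}_p$, the functor $\ho_{\bb{Z}_p}(T_p(X),-)$ is exact, so $\ker\pi'=\ho_{\bb{Z}_p}(T_p(X),\ker\nu)=\ho_{\bb{Z}_p}(T_p(X),C(1))$ via $\iota$; as the image of $\rho$ already consists of $G_K$-equivariant maps, $\pi'\circ\rho=0$ says precisely that it lies in $\ho_{\bb{Z}_p[G_K]}(T_p(X),C(1))$.

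Uniqueness follows at once: the difference of two $C$-$G_K$-linear sections of $\phi$ is a $C$-linear $G_K$-equivariant map $H^0(X,\Omega_{X/K}^1)\otimes_K C\to\ker\phi=H^1(X,\ca{O}_X)\otimes_K C(1)$, and evaluating it on a ($G_K$-fixed) $K$-basis of $H^0(X,\Omega_{X/K}^1)$ yields elements of $(H^1(X,\ca{O}_X)\otimes_K C(1))^{G_K}=H^1(X,\ca{O}_X)\otimes_K H^0(G_K,C(1))=0$, so the two sections coincide. I expect the one genuinely delicate point to be the careful tracking of $G_K$-equivariance when passing between $\rho$ and its $C$-linear extension $\rho_C$ in the two implications; once Theorem~\ref{thm:main} and Hyodo's computation are granted, the rest is bookkeeping.
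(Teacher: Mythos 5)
Your proposal is correct and follows essentially the same route as the paper: the ``splits $\Rightarrow$ image in $C(1)$'' direction via $\delta'=0$ and the commutative square of Theorem~\ref{thm:main}(ii), the converse via the section $\pi\circ\rho$ from Theorem~\ref{thm:main}(i) becoming $G_K$-equivariant, and uniqueness from $(C(1))^{G_K}=0$ (the paper's ``avoidance of $C(1)^{\oplus d}$ and $C^{\oplus d}$''). Your write-up merely spells out the equivariance bookkeeping that the paper leaves implicit.
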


\begin{myrem}\label{rem:HTss}
	Caraiani and Scholze \cite{caraiani2017generic} have constructed a relative version of Hodge-Tate filtration for proper smooth morphisms of adic spaces. And recently, Abbes and Gros \cite{abbes2020suite} have constructed a relative version of Hodge-Tate spectral sequence for projective smooth morphisms of logarithmic schemes. Unlike these works that rely on advanced theories and results, our proof for abelian varieties uses only basic algebraic geometry and $p$-adic Galois cohomology computation of Tate and Hyodo. For instance, we do not use Faltings' almost purity theorem.
\end{myrem}

\subsection{Acknowledgement}
I would like to express my sincere gratitude to my advisor Ahmed Abbes. He led me to this topic and spent a lot of time on discussing with me. He gave me plenty of helpful advice on both studying and writing. I would like to thank the referee for improving some statements and many other helpful comments. This work was done while I was a master's student at D\'epartement de Math\'ematiques d'Orsay, Universit\'e Paris-Saclay, supported by Fondation Math\'ematique Jacques Hadamard (FMJH).

\section{Notation}
\subsection{}
Let $K$ be a complete discrete valuation field of characteristic $0$, with residue field $k$ of characteristic $p>0$. Let $\overline{K}$ be an algebraic closure of $K$, $G_K$ the Galois group of $\overline{K}$ over $K$. Let $C$ be the $p$-adic completion of $\overline{K}$, $v_p$ the valuation on $C$ such that $v_p(p)=1$, $|\ |_p$ the absolute value on $C$ such that $|p|_p=1/p$.  We fix a complete discrete valuation subfield $K_0$ of $K$ such that $\ca{O}_{K_0}/p\ca{O}_{K_0}=k$ (by Cohen structure theorem, cf. \cite{ega4-1} $0_{\op{\Luoma{4}}}$ 19.8.6). We remark that $K/K_0$ is a totally ramified finite extension. We fix elements $(u_i)_{i\in I}$ of $\ca{O}_{K_0}$ such that the reductions $(\overline{u_i})_{i\in I}$ form a $p$-base of $k$. For each $i\in I$, we fix elements $(w_{im})_{m\geq 0}$ of $\ca{O}_{\overline{K}}$ such that $w_{i,m+1}^p=w_{i,m}$ and $w_{i,0}=u_i$. We denote by $(e_i)_{i\in I}$ the standard basis of $\oplus_{i\in I} \bb{Z}$.

\subsection{}
For any discrete valuation field $L$ of characteristic $0$, with residue field of characteristic $p$, we denote by
\begin{align}\label{eq:OmegaK1}
\widehat{\Omega}_{\ca{O}_L}^1=(\Omega_{\ca{O}_{L}/\bb{Z}_p}^1)^\wedge
\end{align}
the $p$-adic completion of the module of differentials of $\ca{O}_L$ over $\bb{Z}_p$.

For any algebraic extension $L'$ over $L$, we set
\begin{align}\label{eq:OmegaK2}
\omhat{L}{L'}=\colim_{L_1/L}\  \widehat{\Omega}_{\ca{O}_{L_1}}^1,
\end{align} 
where $L_1$ runs through all finite subextensions of $L'/L$. We remark that $\omhat{L}{L'}=\widehat{\Omega}_{\ca{O}_{L_1}}^1(\ca{O}_{L'})$ for any finite subextension $L_1$ of $L'/L$, and that $\omhat{L}{L}=\widehat{\Omega}_{\ca{O}_L}^1$.

\subsection{}\label{ssec:2.3}
For any abelian group $M$, we define
\begin{align}
T_p(M)&=\plim_{x\mapsto px} M[p^n]=\ho_{\bb{Z}}(\bb{Z}[1/p]/\bb{Z},M),\\
V_p(M)&=\plim_{x\mapsto px} M=\ho_{\bb{Z}}(\bb{Z}[1/p],M).
\end{align}
Being an inverse limit of $\bb{Z}$-modules each killed by some power of $p$, $T_p(M)$ is a $p$-adically complete $\bb{Z}_p$-module (\cite{jannsen1988cont} 4.4). If $M$ is $p$-primary torsion, then $V_p(M)=T_p(M)\otimes_{\bb{Z}_p}\bb{Q}_p$, and thus it has a natural $\bb{Q}_p$-module structure. If $M$ is a $\bb{Z}_p$-module, then $T_p(M)=\ho_{\bb{Z}_p}(\bb{Q}_p/\bb{Z}_p,M)$, $V_p(M)=\ho_{\bb{Z}_p}(\bb{Q}_p,M)$. We set $\bb{Z}_p(1)=T_p(\ca{O}_{\overline{K}}^\times)$, a free $\bb{Z}_p$-module of rank $1$ with continuous $G_K$-action. For any $\bb{Z}_p$-module $M$ and $r\in \bb{Z}$, we set $M(r)=M\otimes_{\bb{Z}_p}\bb{Z}_p(1)^{\otimes r}$, the $r$-th Tate twist of $M$.
Let $X$ be an abelian variety over $K$. We set $T_p(X)=T_p(X(\overline{K}))$ and $V_p(X)=V_p(X(\overline{K}))$.

\section{Review of Hyodo's Computation of Galois Cohomology Groups of $C(r)$}
\begin{mylem}\label{lem:dvr-diffinj}
	Let $B/A$ be a finite extension of discrete valuation rings, whose fraction field extension and residue field extension are both separable. We assume that $A$ is henselian, or that $B/A$ is totally ramified. Let $R$ be a subring of $A$. Then the canonical map $B\otimes_A \Omega_{A/R}^1\to \Omega_{B/R}^1$ is injective.
\end{mylem}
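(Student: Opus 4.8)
The plan is to reduce to a monogenic extension and then compute with the conormal (second fundamental) exact sequence of differentials. Write $K=\op{Frac}(A)$, $L=\op{Frac}(B)$, let $\kappa\subseteq\ell$ be the residue fields, and set $e=e(B/A)$, $f=f(B/A)$, so $[L:K]=ef$. First I would produce an intermediate discrete valuation ring $A\subseteq B'\subseteq B$, with fraction field $K'$, such that $B'/A$ is finite \'etale and $B/B'$ is totally ramified. If $B/A$ is already totally ramified, take $B'=A$. If $A$ is henselian, then $B$ is henselian local as well (being finite over a henselian local ring); since $\ell/\kappa$ is finite separable I choose a primitive element $\bar\beta$, lift its monic minimal polynomial $\bar g\in\kappa[x]$ to $g\in A[x]$, and use Hensel's lemma to lift $\bar\beta$ to a root $\beta\in B$ of $g$. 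Then $B':=A[\beta]\cong A[x]/(g)$ reduces modulo $\mathfrak m_A$ to $\kappa[x]/(\bar g)=\ell$ and has $g'(\beta)$ a unit (because $\bar\beta$ is a simple root of $\bar g$), so $B'/A$ is \'etale and $B'$ is a discrete valuation ring with residue field $\ell$ and uniformizer $\pi_A$. Counting degrees and ramification indices gives $f(B/B')=1$ and $e(B/B')=e=[L:K']$, i.e. $B/B'$ is totally ramified, and $L/K'$ is separable since $L/K$ is. For the totally ramified extension $B/B'$ one then checks that $B=B'[\pi_B]$ for any uniformizer $\pi_B$ of $B$: the $L$-valuations of $1,\pi_B,\dots,\pi_B^{e-1}$ are $0,\dots,e-1$ whereas $v_L(\pi_{B'})=e$, so a valuation count shows these elements form a $K'$-basis of $L$ and that any $K'$-combination lying in $B$ has coefficients in $B'$. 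The minimal polynomial $f_0\in B'[x]$ of $\pi_B$ is then Eisenstein of degree $e$ and separable (as $L/K'$ is), so $f_0'(\pi_B)\neq 0$ in the domain $B$.

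The core is then the following claim: if $S\lto T=S[x]/(h)$ with $h\in S[x]$ monic and $h'(\bar x)$ a nonzerodivisor in $T$ (where $\bar x$ denotes the image of $x$), and $R\subseteq S$ is any subring, then the canonical map $T\otimes_S\Omega_{S/R}^1\lto\Omega_{T/R}^1$ is injective, and an isomorphism when $h'(\bar x)$ is a unit. Indeed $\ke(S[x]\surj T)=(h)$ by division (with $h$ monic), and $\Omega_{S[x]/R}^1=(S[x]\otimes_S\Omega_{S/R}^1)\oplus S[x]\,dx$, so the conormal sequence reads
\[
(h)/(h)^2\lto(T\otimes_S\Omega_{S/R}^1)\oplus T\,dx\lto\Omega_{T/R}^1\lto 0,
\]
where $(h)/(h)^2$ is free of rank one over $T$ on the class of $h$, which maps to $(\eta,\,h'(\bar x))$ with $\eta\in T\otimes_S\Omega_{S/R}^1$ collecting the derivatives of the coefficients of $h$. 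Hence $\omega\in T\otimes_S\Omega_{S/R}^1$ dies in $\Omega_{T/R}^1$ iff $(\omega,0)=t\cdot(\eta,h'(\bar x))$ for some $t\in T$, forcing $t\,h'(\bar x)=0$, so $t=0$ and $\omega=0$; when $h'(\bar x)$ is a unit one may additionally solve for $dx$, yielding the isomorphism. Applying this to $A\to B'=A[x]/(g)$ (an isomorphism, since $g'(\beta)$ is a unit) and to $B'\to B=B'[x]/(f_0)$ (an injection, since $f_0'(\pi_B)\neq 0$ and $B$ is a domain), and then composing
\[
B\otimes_A\Omega_{A/R}^1=B\otimes_{B'}(B'\otimes_A\Omega_{A/R}^1)\iso B\otimes_{B'}\Omega_{B'/R}^1\inj\Omega_{B/R}^1,
\]
which is the canonical map by functoriality along $R\subseteq A\subseteq B'\subseteq B$, finishes the proof.

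The only genuinely non-formal part is the construction in the first paragraph: producing the \'etale-by-totally-ramified tower and the monogenicity of a totally ramified extension is precisely where the hypothesis ``$A$ henselian or $B/A$ totally ramified'' and the separability of the residue and fraction field extensions get used (and without them the statement can fail). Once one is in the monogenic situation the rest is routine; the computation in the second paragraph can also be read as the vanishing of $H_1$ of the cotangent complex $L_{T/S}\simeq[(h)/(h)^2\to T\,dx]$, which holds exactly because $h'$ is a nonzerodivisor.
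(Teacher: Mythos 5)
Your proof is correct and follows essentially the same route as the paper's: reduce to the totally ramified (monogenic) case via the maximal unramified subextension, then use the conormal sequence for $B=A[X]/(f)$ together with the fact that $f'(x)$ is a nonzerodivisor in the domain $B$. You simply fill in more details (the Hensel's lemma construction of the \'etale subring, the monogenicity of a totally ramified extension, and the explicit two-step composition) that the paper leaves implicit.
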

\begin{proof}
	After replacing $A$ by its maximal unramified extension in $B$, we may assume that $B$ is totally ramified over $A$. Hence, $B$ is of the form $A[X]/(f(X))$ for some irreducible polynomial $f\in A[X]$. Let $x$ be the image of $X$ in $B$. Then we have
	\begin{align}
	\Omega_{B/R}^1=(B\otimes_A\Omega_{A/R}^1\oplus B\df X)/B(\df_Af(x)+f'(x)\df X),
	\end{align}
	where $\df_A f\in A[X]\otimes_A \Omega_{A/R}^1$ is obtained by differentiating the coefficients of $f$.
	Since $f'(x)\neq 0$, the canonical map $B\otimes_A\Omega_{A/R}^1\to \Omega_{B/R}^1$ is injective.
\end{proof}

\begin{mylem}[\cite{hyodo1986hodge} 4-4]\label{lem:hyodo-K0diff}
	There is an isomorphism of $\ca{O}_{K_0}$-modules
	\begin{align}\label{eq:hyodo-K0diff}
	(\oplus_{i\in I}\ca{O}_{K_0})^\wedge\iso \widehat{\Omega}_{\ca{O}_{K_0}}^1 ,\ e_i\mapsto \df\log u_i,\ \forall i\in I.
	\end{align}
\end{mylem}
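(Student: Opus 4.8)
The plan is to reduce modulo $p^n$ and induct on $n$. Write $A=\ca{O}_{K_0}$ and $A_n=A/p^nA$, so that $A_1=k$ and each $A_n$ is a local Artinian ring whose maximal ideal $pA_n$ is nilpotent of order $n$. Since base change of K\"ahler differentials along $\bb{Z}_p\to\bb{Z}/p^n$ identifies $\Omega^1_{A/\bb{Z}_p}\otimes_{\bb{Z}_p}\bb{Z}/p^n$ with $\Omega^1_{A_n/(\bb{Z}/p^n)}=\Omega^1_{A_n/\bb{Z}}=:\Omega_n$ (the last equality because $\bb{Z}\to\bb{Z}/p^n$ is surjective), we have $\widehat{\Omega}^1_{\ca{O}_{K_0}}=\plim_n\Omega_n$. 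I would exhibit, compatibly in $n$, isomorphisms $\theta_n\colon\oplus_{i\in I}A_n\iso\Omega_n$ sending $e_i$ to $\df u_i$; passing to the inverse limit and precomposing with the automorphism $e_i\mapsto u_i^{-1}e_i$ of $(\oplus_{i\in I}\ca{O}_{K_0})^\wedge$ (legitimate since $\overline{u_i}$, lying in a $p$-basis, is nonzero in $k$, so $u_i\in\ca{O}_{K_0}^\times$) then yields the asserted isomorphism $e_i\mapsto\df\log u_i$.

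For $n=1$ the map $\theta_1$ is an isomorphism by the classical description of $p$-bases: the elements $\df\overline{u_i}$ form a $k$-basis of $\Omega^1_{k/\bb{F}_p}$, spanning because $\df$ kills $p$-th powers and linearly independent via the partial derivations attached to the $p$-basis. The inductive step rests on the observation that the square-zero extension $A_{n+1}\surj A_n$ with kernel $J=p^nA_{n+1}$ (so that $J^2=p^{2n}A_{n+1}=0$) is ``differentially trivial'': in the conormal exact sequence $J\xrightarrow{\bar d}\Omega_{n+1}\otimes_{A_{n+1}}A_n\to\Omega_n\to 0$, the $A_n$-linear map $\bar d$ is determined by $\bar d(\overline{p^n})=\df(p^n)\otimes 1=0$, since $p^n$ is the image of an element of $\bb{Z}_p$. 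Hence the natural map $\Omega_{n+1}\otimes_{A_{n+1}}A_n\to\Omega_n$ is an isomorphism, and iterating it gives $\Omega_n\otimes_{A_n}k\cong\Omega^1_{k/\bb{F}_p}$, free on the classes of the $\df u_i$. This is the one step that uses the absolute unramifiedness hypothesis $\ca{O}_{K_0}/p\ca{O}_{K_0}=k$ (equivalently, that $p$ is a uniformizer).

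Granting this, I would first note that the $\df u_i$ generate $\Omega_n$: the cokernel $Q$ of $\oplus_iA_n\df u_i\hookrightarrow\Omega_n$ satisfies $Q\otimes_{A_n}k=0$ (the $\df\overline{u_i}$ already span $\Omega_n\otimes_{A_n}k$), whence $Q=pQ=\dots=p^nQ=0$; thus $\theta_n$ is surjective and we may set $N_n=\ke(\theta_n)$. Tensoring the sequence $0\to N_n\to\oplus_iA_n\to\Omega_n\to 0$ over $A_n$ with $A_{n-1}$, and identifying the resulting middle map with $\theta_{n-1}$ via $\Omega_n\otimes_{A_n}A_{n-1}\cong\Omega_{n-1}$, produces a surjection $N_n/p^{n-1}N_n\surj N_{n-1}$; tensoring instead with $k$, and using that $\theta_n\otimes_{A_n}k=\theta_1$ is an isomorphism, shows $N_n\subseteq p(\oplus_iA_n)$. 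The induction now closes: if $N_{n-1}=0$ then $N_n=p^{n-1}N_n\subseteq p^{n-1}\cdot p(\oplus_iA_n)=p^n(\oplus_iA_n)=0$, so $\theta_n$ is an isomorphism. I anticipate no serious obstacle; the only delicate point is the vanishing $\bar d=0$ above, and the rest is routine commutative algebra together with the cited fact about $p$-bases in the base case.
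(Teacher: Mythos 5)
Your reduction to the finite levels $A_n=\ca{O}_{K_0}/p^n\ca{O}_{K_0}$, the base case via $p$-bases, the identification $\Omega_{n+1}\otimes_{A_{n+1}}A_n\cong\Omega_n$ (which is in fact just base change of K\"ahler differentials along $\bb{Z}/p^{n+1}\bb{Z}\to\bb{Z}/p^n\bb{Z}$, so the conormal-sequence detour is unnecessary), and the surjectivity of $\theta_n$ by nilpotent Nakayama are all fine. The gap is in the injectivity step of the induction. From the surjection $N_n/p^{n-1}N_n\surj N_{n-1}$ together with $N_{n-1}=0$ you conclude $N_n=p^{n-1}N_n$; but a surjection onto the zero module gives no information about its source. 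What $N_{n-1}=0$ actually yields is that the composite $N_n\hookrightarrow\oplus_iA_n\surj\oplus_iA_{n-1}$ vanishes, i.e.\ $N_n\subseteq p^{n-1}(\oplus_iA_n)$, and together with your other containment $N_n\subseteq p(\oplus_iA_n)$ this is not enough: already for $n=2$ the two containments coincide and do not force $N_2=0$. Indeed, when $N_{n-1}=0$ the exact sequence $0\to\mathrm{Tor}_1^{A_n}(\Omega_n,A_{n-1})\to N_n\otimes_{A_n}A_{n-1}\to\oplus_iA_{n-1}$ identifies $N_n/p^{n-1}N_n$ with $\mathrm{Tor}_1^{A_n}(\Omega_n,A_{n-1})$, so the equality you assert is precisely the flatness of $\Omega_n$ that your argument never establishes.

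That flatness is the essential input, and it is where the paper's proof does real work: since $\ca{O}_{K_0}$ is flat over $\bb{Z}_p$ and $k$ is formally smooth over $\bb{F}_p$, each $A_n$ is formally smooth over $\bb{Z}/p^n\bb{Z}$, hence $\Omega_n$ is a projective $A_n$-module. Granting this, injectivity follows at once: the surjection $\theta_n$ splits, so $\oplus_iA_n\cong\Omega_n\oplus N_n$, the split exact sequence stays exact after $\otimes_{A_n}k$, and $N_n\otimes_{A_n}k\inj\oplus_ik$ lands in $\ke(\theta_1)=0$, whence $N_n=pN_n=\dots=p^nN_n=0$. (The paper instead uses projectivity to get exactness of $0\to\Omega_1\xrightarrow{\cdot p^{n-1}}\Omega_n\to\Omega_{n-1}\to 0$ and compares it with the corresponding sequence of free modules.) Some such flatness statement must be injected into your argument before the induction closes; everything else in your write-up is correct.
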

\begin{proof}
	As $(\overline{u_i})_{i\in I}$ form a $p$-base of the residue field of $\ca{O}_{K_0}$, we have $\Omega_{\ca{O}_{K_0}/\bb{Z}_p}^1\otimes_{\bb{Z}_p}\bb{Z}_p/p\bb{Z}_p=\Omega_{k/\bb{F}_p}^1= \oplus_{i\in I}k$, where $e_i$ corresponds to $\df\log \overline{u_i}$. Since $\ca{O}_{K_0}$ is flat over $\bb{Z}_p$ and $k$ is formally smooth over $\bb{F}_p$, $\ca{O}_{K_0}/p^n\ca{O}_{K_0}$ is formally smooth over $\bb{Z}_p/p^n\bb{Z}_p$ for each $n\geq 1$ (\cite{ega4-1} $0_{\op{\Luoma{4}}}$ 19.7.1, \cite{stacks-project} \href{https://stacks.math.columbia.edu/tag/031L}{031L}). In particular, $\Omega_{\ca{O}_{K_0}/\bb{Z}_p}^1\otimes\bb{Z}_p/p^n\bb{Z}_p$ is a projective $\ca{O}_{K_0}/p^n\ca{O}_{K_0}$-module. Hence, we have an exact sequence
	\begin{align}
	\xymatrix{
	0\ar[r]& \Omega_{\ca{O}_{K_0}/\bb{Z}_p}^1\otimes\bb{Z}_p/p\bb{Z}_p\ar[r]^-{\cdot p^{n-1}}\ar[r]&\Omega_{\ca{O}_{K_0}/\bb{Z}_p}^1\otimes\bb{Z}_p/p^n\bb{Z}_p\ar[r]&\Omega_{\ca{O}_{K_0}/\bb{Z}_p}^1\otimes\bb{Z}_p/p^{n-1}\bb{Z}_p\ar[r]&0,
	}
	\end{align}
	from which we get isomorphisms $\oplus_{i\in I}\ca{O}_{K_0}/p^n\ca{O}_{K_0}\iso\Omega_{\ca{O}_{K_0}/\bb{Z}_p}^1\otimes\bb{Z}_p/p^n\bb{Z}_p$ by induction. The conclusion follows by taking limit over $n$.
\end{proof}

\begin{myprop}[\cite{hyodo1986hodge} 4-2-1]\label{prop:hyodo-K/Z-diff}
	There is an exact sequence of $\ca{O}_K$-modules
	\begin{align}\label{eq:hyodo-K/Z-diff}
	\xymatrix{
		0\ar[r]&(\oplus_{i\in I}\ca{O}_K)^\wedge\ar[r]^-{\theta}&\widehat{\Omega}_{\ca{O}_K}^1\ar[r]& \Omega_{\ca{O}_K/\ca{O}_{K_0}}^1\ar[r]&0,
	}
	\end{align}
	where $\theta(e_i)=\df\log u_i$ for any $i\in I$.
\end{myprop}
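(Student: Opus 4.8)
The plan is to obtain \eqref{eq:hyodo-K/Z-diff} by $p$-adically completing the first fundamental exact sequence of Kähler differentials attached to the tower $\bb{Z}_p\to\ca{O}_{K_0}\to\ca{O}_K$, namely
\begin{align}
\ca{O}_K\otimes_{\ca{O}_{K_0}}\Omega_{\ca{O}_{K_0}/\bb{Z}_p}^1\xrightarrow{\ \alpha\ }\Omega_{\ca{O}_K/\bb{Z}_p}^1\lto\Omega_{\ca{O}_K/\ca{O}_{K_0}}^1\lto 0.
\end{align}
First I would check that $\alpha$ is injective: this is Lemma \ref{lem:dvr-diffinj} applied with $R=\bb{Z}_p$, $A=\ca{O}_{K_0}$ and $B=\ca{O}_K$, since $\ca{O}_{K_0}$ is a complete, hence henselian, discrete valuation ring, the fraction field extension $K/K_0$ is separable (as $\op{char}K=0$), and the residue field extension is trivial. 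Next I would observe that $\Omega_{\ca{O}_K/\ca{O}_{K_0}}^1$ is a finitely generated $\ca{O}_K$-module (because $\ca{O}_K$ is module-finite, hence of finite type as an algebra, over $\ca{O}_{K_0}$) and is torsion (because $\Omega_{K/K_0}^1=0$ in characteristic $0$); being a finitely generated torsion module over the discrete valuation ring $\ca{O}_K$, it has finite length, so it is killed by $p^n$ for some $n\geq 1$ and in particular is already $p$-adically complete.

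Writing $P=\ca{O}_K\otimes_{\ca{O}_{K_0}}\Omega_{\ca{O}_{K_0}/\bb{Z}_p}^1$ and identifying $P$ with the submodule $\alpha(P)\subseteq Q:=\Omega_{\ca{O}_K/\bb{Z}_p}^1$, I would then $p$-adically complete the short exact sequence $0\to P\to Q\to\Omega_{\ca{O}_K/\ca{O}_{K_0}}^1\to 0$. The point to watch — and I expect this to be the only genuine subtlety — is that $p$-adic completion is not exact for modules that need not be finitely generated (when $I$ is infinite), so one must control the $\plim^1$ terms. Here this is forced by the bounded torsion of the cokernel: since $p^nQ\subseteq P$, one has $p^kP\subseteq P\cap p^kQ\subseteq p^{k-n}P$ for $k\geq n$, so the subspace topology on $P$ agrees with its $p$-adic topology and $\plim_k P/(P\cap p^kQ)=P^\wedge$; moreover, for each $k\geq n$ the sequence $0\to P/(P\cap p^kQ)\to Q/p^kQ\to\Omega_{\ca{O}_K/\ca{O}_{K_0}}^1\to 0$ is exact, and the left-hand inverse system has surjective transition maps, hence vanishing $\plim^1$. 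Passing to the limit yields an exact sequence
\begin{align}
0\lto P^\wedge\lto\widehat{\Omega}_{\ca{O}_K}^1\lto\Omega_{\ca{O}_K/\ca{O}_{K_0}}^1\lto 0.
\end{align}

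It remains to identify $P^\wedge$ together with the images of the standard generators. Since $\ca{O}_{K_0}$ is a discrete valuation ring and $\ca{O}_K$ is module-finite and torsion-free over it, $\ca{O}_K$ is a finite free $\ca{O}_{K_0}$-module; therefore $p$-adic completion commutes with $\ca{O}_K\otimes_{\ca{O}_{K_0}}(-)$ and preserves completeness, giving $P^\wedge=\ca{O}_K\otimes_{\ca{O}_{K_0}}\widehat{\Omega}_{\ca{O}_{K_0}}^1$. By Lemma \ref{lem:hyodo-K0diff} one has $\widehat{\Omega}_{\ca{O}_{K_0}}^1\cong(\oplus_{i\in I}\ca{O}_{K_0})^\wedge$ with $\df\log u_i\leftrightarrow e_i$, whence $P^\wedge\cong\ca{O}_K\otimes_{\ca{O}_{K_0}}(\oplus_{i\in I}\ca{O}_{K_0})^\wedge\cong(\oplus_{i\in I}\ca{O}_K)^\wedge$, the element $1\otimes\df\log u_i$ corresponding to $e_i$. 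Under these identifications the first map of the completed sequence sends $e_i$ to the image of $1\otimes\df\log u_i$ under $\alpha$, i.e. to $\df\log u_i\in\widehat{\Omega}_{\ca{O}_K}^1$; naming this map $\theta$ produces exactly \eqref{eq:hyodo-K/Z-diff}. To summarize, the only step requiring real care is the compatibility of $p$-adic completion with exactness in the middle paragraph; everything else is a formal manipulation of the fundamental exact sequences together with the inputs of Lemmas \ref{lem:dvr-diffinj} and \ref{lem:hyodo-K0diff}.
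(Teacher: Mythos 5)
Your proof is correct and follows essentially the same route as the paper: left-exactness of the fundamental sequence for $\bb{Z}_p\to\ca{O}_{K_0}\to\ca{O}_K$ via Lemma \ref{lem:dvr-diffinj}, exactness after $p$-adic completion because $\Omega_{\ca{O}_K/\ca{O}_{K_0}}^1$ is killed by a power of $p$, and the identification of the completed first term via Lemma \ref{lem:hyodo-K0diff} and finite-freeness of $\ca{O}_K$ over $\ca{O}_{K_0}$. The only difference is that you spell out the $\plim^1$/subspace-topology argument that the paper delegates to a Stacks Project citation.
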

\begin{proof}
	The sequence of modules of differentials of $\ca{O}_K/\ca{O}_{K_0}/\bb{Z}_p$,
	\begin{align}\label{eq:hyodo-K/Z-diff3}
	\xymatrix{
		0\ar[r]&\ca{O}_K\otimes_{\ca{O}_{K_0}}\Omega_{\ca{O}_{K_0}/\bb{Z}_p}^1\ar[r]&\Omega_{\ca{O}_{K}/\bb{Z}_p}^1\ar[r]& \Omega_{\ca{O}_K/\ca{O}_{K_0}}^1\ar[r]&0,
	}
	\end{align}
	is exact by \ref{lem:dvr-diffinj}. Passing to $p$-adic completions, as $\Omega_{\ca{O}_K/\ca{O}_{K_0}}^1$ is killed by a power of $p$, we still get an exact sequence (\cite{stacks-project} \href{https://stacks.math.columbia.edu/tag/0BNG}{0BNG}). The conclusion follows from \ref{lem:hyodo-K0diff} and the isomorphism $\ca{O}_K\otimes_{\ca{O}_{K_0}} (\oplus_{i\in I}\ca{O}_{K_0})^\wedge\iso (\oplus_{i\in I}\ca{O}_{K})^\wedge$ as $\ca{O}_K$ is finite free over $\ca{O}_{K_0}$.
\end{proof}

\begin{mylem}[\cite{hyodo1986hodge} 4-4]\label{lem:hyodo-M0diff}
	Let $M_0=\bigcup_{i\in I,m\geq 0}K_0(w_{im})\subseteq \overline{K}$.
	Then there is an isomorphism of $\ca{O}_{M_0}$-modules
	\begin{align}\label{eq:hyodo-M0diff}
	M_0\otimes_{\ca{O}_{K_0}}(\oplus_{i\in I}\ca{O}_{K_0})^\wedge\iso \omhat{K_0}{M_0},\  p^{-m}\otimes e_i\mapsto \df\log w_{im},\ \forall i\in I,m\in \bb{N}.
	\end{align}
\end{mylem}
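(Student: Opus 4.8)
The plan is to use Lemma~\ref{lem:hyodo-K0diff} to identify $(\oplus_{i\in I}\ca{O}_{K_0})^\wedge$ with $\widehat{\Omega}_{\ca{O}_{K_0}}^1$ (sending $e_i$ to $\df\log u_i$), so that the assertion becomes: the canonical base-change map $\widehat{\Omega}_{\ca{O}_{K_0}}^1\to\omhat{K_0}{M_0}$ extends to an isomorphism of $M_0$-modules
\begin{align*}
\Phi\colon M_0\otimes_{\ca{O}_{K_0}}\widehat{\Omega}_{\ca{O}_{K_0}}^1\iso\omhat{K_0}{M_0},
\end{align*}
after which $p^{-m}\otimes e_i\mapsto p^{-m}\df\log u_i=\df\log w_{im}$, the last equality because $u_i=w_{im}^{p^m}$ forces $\df\log u_i=p^m\df\log w_{im}$. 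The crux is that $\omhat{K_0}{M_0}$ really carries an $M_0$-module structure; this, and everything else, will come from an explicit description of the finite stages of the colimit in \eqref{eq:OmegaK2}.

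First I would fix the cofinal family of finite subextensions $L_{J,n}=K_0\big((w_{in})_{i\in J}\big)\subseteq M_0$, with $J\subseteq I$ finite and $n\geq 0$. Using that $(\overline{u_i})_{i\in I}$ is a $p$-base of $k$, the residue field of $\ca{O}_{L_{J,n}}$ is $k'=k\big((\overline{u_i}^{1/p^n})_{i\in J}\big)$, a purely inseparable extension of degree $p^{n\#J}$; hence $L_{J,n}/K_0$ has ramification index $1$ and $\ca{O}_{L_{J,n}}=\ca{O}_{K_0}[(w_{in})_{i\in J}]$ is finite free over $\ca{O}_{K_0}$. Moreover $\{\overline{u_j}:j\notin J\}\cup\{\overline{u_i}^{1/p^n}:i\in J\}$ is a $p$-base of $k'$, lifted by $\{u_j:j\notin J\}\cup\{w_{in}:i\in J\}$, so running the argument of Lemma~\ref{lem:hyodo-K0diff} for $\ca{O}_{L_{J,n}}$ gives an isomorphism of $\ca{O}_{L_{J,n}}$-modules
\begin{align*}
\Big(\bigoplus_{i\in I}\ca{O}_{L_{J,n}}\Big)^\wedge\iso\widehat{\Omega}_{\ca{O}_{L_{J,n}}}^1,\qquad e_j\mapsto\df\log u_j\ (j\notin J),\quad e_i\mapsto\df\log w_{in}\ (i\in J).
\end{align*}
Since $w_{in}=w_{in'}^{p^{n'-n}}$, in these coordinates the transition map $\widehat{\Omega}_{\ca{O}_{L_{J,n}}}^1\to\widehat{\Omega}_{\ca{O}_{L_{J',n'}}}^1$ (for $J\subseteq J'$, $n\le n'$) is the $p$-adic completion of the coordinatewise inclusion $\bigoplus_I\ca{O}_{L_{J,n}}\hookrightarrow\bigoplus_I\ca{O}_{L_{J',n'}}$ followed by multiplication by $1$, $p^{n'}$, or $p^{n'-n}$ in the coordinates indexed by $I\setminus J'$, $J'\setminus J$, $J$ respectively; comparing with the completed bases one reads off that it is injective.

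It follows that $\omhat{K_0}{M_0}=\bigcup_{J,n}\widehat{\Omega}_{\ca{O}_{L_{J,n}}}^1$ is $p$-torsion-free, each stage being a $p$-completion of a free module, and I claim it is also $p$-divisible: given $\omega=\sum_{j\notin J}a_j\df\log u_j+\sum_{i\in J}b_i\df\log w_{in}$ in $\widehat{\Omega}_{\ca{O}_{L_{J,n}}}^1$, only the finitely many coordinates $a_j$ not lying in $p\,\ca{O}_{L_{J,n}}$, say $j\in S$, obstruct an obvious division by $p$; writing $\df\log u_j=p\,\df\log w_{j1}$ for $j\in S$ and $\df\log w_{in}=p\,\df\log w_{i,n+1}$ for $i\in J$, and dividing the remaining coordinates formally by $p$, exhibits $\omega=p\eta$ with $\eta\in\widehat{\Omega}_{\ca{O}_{L'}}^1$ for the finite extension $L'=K_0\big((w_{j1})_{j\in S},(w_{i,n+1})_{i\in J}\big)$. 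Hence $p$ acts bijectively on $\omhat{K_0}{M_0}$, and together with its $\ca{O}_{M_0}$-action (note $\ca{O}_{M_0}[1/p]=M_0$) this is the sought $M_0$-structure, so $\Phi$ is defined. Surjectivity of $\Phi$ follows because $\widehat{\Omega}_{\ca{O}_{L_{J,n}}}^1$, decomposed along a finite $\ca{O}_{K_0}$-basis of $\ca{O}_{L_{J,n}}$, is spanned by the $p$-adically convergent sums of the $\df\log u_j$ (coming from $\widehat{\Omega}_{\ca{O}_{K_0}}^1$) together with the finitely many $\df\log w_{in}=\Phi(p^{-n}\otimes e_i)$, $i\in J$. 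For injectivity, after clearing denominators I may assume $\xi\in\ca{O}_L\otimes_{\ca{O}_{K_0}}\widehat{\Omega}_{\ca{O}_{K_0}}^1=(\oplus_I\ca{O}_L)^\wedge$ for some finite $L=L_{J,n}$; there $\Phi$ coincides with the base-change map $\ca{O}_L\otimes_{\ca{O}_{K_0}}\widehat{\Omega}_{\ca{O}_{K_0}}^1\to\widehat{\Omega}_{\ca{O}_L}^1$ — which in the coordinates above is diagonal multiplication by $1$ (on $I\setminus J$) or $p^n$ (on $J$), hence injective — followed by the injective inclusion $\widehat{\Omega}_{\ca{O}_L}^1\hookrightarrow\omhat{K_0}{M_0}$; thus $p^N\xi=0$, and $M_0\otimes_{\ca{O}_{K_0}}\widehat{\Omega}_{\ca{O}_{K_0}}^1$ is $p$-torsion-free ($M_0$ being flat over the valuation ring $\ca{O}_{K_0}$), so $\xi=0$.

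I expect the main obstacle to be the interaction of the $p$-adic completions with a colimit taken over \emph{finite} subextensions, which is most delicate when $I$ is infinite: this is exactly what makes the $p$-divisibility of $\omhat{K_0}{M_0}$, hence its $M_0$-module structure, non-formal. A secondary point needing care is that the relevant residue extensions are purely inseparable, so Lemma~\ref{lem:dvr-diffinj} cannot be invoked, and the injectivity of the transition maps (and of the base-change maps at each finite level) must instead be extracted from the explicit completed-free descriptions of the $\widehat{\Omega}_{\ca{O}_{L_{J,n}}}^1$.
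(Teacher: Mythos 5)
Your proposal is correct and follows essentially the same route as the paper: apply Lemma \ref{lem:hyodo-K0diff} to each finite stage $K_0\big((w_{in})_{i\in J}\big)$ using the modified $p$-base $\{u_j\}_{j\notin J}\cup\{w_{in}\}_{i\in J}$ of its residue field, then pass to the colimit over $J$ and $n$. The paper leaves the colimit bookkeeping (injectivity of transition maps, $p$-divisibility giving the $M_0$-module structure, and the identification with $M_0\otimes_{\ca{O}_{K_0}}(\oplus_{i\in I}\ca{O}_{K_0})^\wedge$) implicit, and your write-up simply supplies those details correctly.
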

\begin{proof}
	For an integer $N>0$ and a finite subset $J\subseteq I$, let $L_0=\bigcup_{i\in J}K_0(w_{iN})$. Then by \ref{lem:hyodo-K0diff}, $(\oplus_{i\in I}\ca{O}_{L_0})^\wedge$ is isomorphic to $\widehat{\Omega}_{\ca{O}_{L_0}}^1$ by sending $e_i$ to $\df\log w_{iN}$ if $i\in J$, and to $\df\log u_i$ if $i\notin J$. The conclusion follows by taking colimit over $J$ and $N$.
\end{proof}

\begin{mylem}[\cite{hyodo1986hodge} 4-7]\label{lem:hyodo-M/M0}
	With the same notation as in {\rm\ref{lem:hyodo-M0diff}}, let $M$ be a finite extension of $M_0$. 
	Then there is a canonical exact sequence of $\ca{O}_M$-modules
	\begin{align}\label{eq:hyodo-M/M0}
	\xymatrix{
		0\ar[r]&\ca{O}_{M}\otimes_{\ca{O}_{M_0}}\omhat{K_0}{M_0}\ar[r]&\omhat{K_0}{M}\ar[r]& \Omega_{\ca{O}_M/\ca{O}_{M_0}}^1\ar[r]&0.
	}
	\end{align}
\end{mylem}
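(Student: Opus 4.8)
The plan is to reduce the claim to Lemma~\ref{lem:dvr-diffinj} exactly as in the proof of Proposition~\ref{prop:hyodo-K/Z-diff}, but working over the base $K_0$ instead of $\bb{Z}_p$ and passing to the colimit over the finite subextensions of $M_0/K_0$ at the right moment. First I would write down, for each finite subextension $L_0$ of $M_0/K_0$ (so $L_0$ is of the form $\bigcup_{i\in J}K_0(w_{iN})$ for a finite $J\subseteq I$ and an integer $N$) and each finite extension $M_1$ of $L_0$ inside $M$, the cotangent exact sequence for $\ca{O}_{M_1}/\ca{O}_{L_0}/\bb{Z}_p$:
\begin{align}
\xymatrix{
0\ar[r]&\ca{O}_{M_1}\otimes_{\ca{O}_{L_0}}\Omega_{\ca{O}_{L_0}/\bb{Z}_p}^1\ar[r]&\Omega_{\ca{O}_{M_1}/\bb{Z}_p}^1\ar[r]&\Omega_{\ca{O}_{M_1}/\ca{O}_{L_0}}^1\ar[r]&0.
}
\end{align}
The left exactness is precisely the content of Lemma~\ref{lem:dvr-diffinj}: $\ca{O}_{M_1}/\ca{O}_{L_0}$ is a finite extension of discrete valuation rings with separable residue and fraction field extensions (both $M_0$ and $M$ are contained in the separable closure $\overline{K}$, and residue extensions of DVRs inside $\overline{K}$ over a fixed one are separable since $K_0$ already carries a $p$-base), $\ca{O}_{L_0}$ is henselian (it is a complete DVR, being a finite extension of the complete DVR $\ca{O}_{K_0}$), and $R=\bb{Z}_p\subseteq \ca{O}_{L_0}$; so the lemma applies verbatim.

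Next I would pass to $p$-adic completions. Since $\Omega_{\ca{O}_{M_1}/\ca{O}_{L_0}}^1$ is killed by a power of $p$ (the extension is generically separable, so $f'(x)\neq 0$ in the notation of Lemma~\ref{lem:dvr-diffinj}, and the different is nonzero), the completed sequence stays short exact by the same Mittag-Leffler / \cite{stacks-project} \href{https://stacks.math.columbia.edu/tag/0BNG}{0BNG} argument used in Proposition~\ref{prop:hyodo-K/Z-diff}; moreover $p$-adic completion commutes with the finite extension of scalars $\ca{O}_{M_1}\otimes_{\ca{O}_{L_0}}(-)$ since $\ca{O}_{M_1}$ is finite free over $\ca{O}_{L_0}$, so the left-hand term becomes $\ca{O}_{M_1}\otimes_{\ca{O}_{L_0}}\widehat{\Omega}_{\ca{O}_{L_0}}^1$. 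This gives, for every such pair $L_0\subseteq M_1$,
\begin{align}
\xymatrix{
0\ar[r]&\ca{O}_{M_1}\otimes_{\ca{O}_{L_0}}\widehat{\Omega}_{\ca{O}_{L_0}}^1\ar[r]&\widehat{\Omega}_{\ca{O}_{M_1}}^1\ar[r]&\Omega_{\ca{O}_{M_1}/\ca{O}_{L_0}}^1\ar[r]&0.
}
\end{align}

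Finally I would take the colimit. Fixing $M$ (hence only finitely many $w_{im}$ are relevant, and we may as well enlarge $L_0$ so that $M/L_0$ is finite with $L_0\subseteq M_0$), the term $\widehat{\Omega}_{\ca{O}_{M_1}}^1$ runs over the directed system defining $\widehat{\Omega}_{\ca{O}_{K_0}}^1(\ca{O}_M)$, and letting $L_0$ grow through the finite subextensions of $M_0/K_0$ while $M_1=M$ is fixed, $\ca{O}_M\otimes_{\ca{O}_{L_0}}\widehat{\Omega}_{\ca{O}_{L_0}}^1$ computes $\ca{O}_M\otimes_{\ca{O}_{M_0}}\widehat{\Omega}_{\ca{O}_{K_0}}^1(\ca{O}_{M_0})$ by Lemma~\ref{lem:hyodo-M0diff} and the compatibility of the isomorphisms there with base extension, while $\Omega_{\ca{O}_M/\ca{O}_{L_0}}^1$ has colimit $\Omega_{\ca{O}_M/\ca{O}_{M_0}}^1$ (differentials commute with filtered colimits of rings). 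Since filtered colimits are exact, the limiting sequence
\begin{align}
\xymatrix{
0\ar[r]&\ca{O}_M\otimes_{\ca{O}_{M_0}}\omhat{K_0}{M_0}\ar[r]&\omhat{K_0}{M}\ar[r]&\Omega_{\ca{O}_M/\ca{O}_{M_0}}^1\ar[r]&0
}
\end{align}
is exact, which is the assertion. The one point that needs genuine care—the place I expect to be the main obstacle—is the bookkeeping of the two independent colimit directions (enlarging $L_0$ inside $M_0$ versus the system computing $\widehat{\Omega}_{\ca{O}_{K_0}}^1(\ca{O}_M)$) and checking that the transition maps between the Lemma~\ref{lem:dvr-diffinj} sequences for $L_0\subseteq L_0'$ are compatible so that the colimit of short exact sequences is the one displayed; once one fixes $M$ and a cofinal choice of $L_0$ with $M/L_0$ finite, this is routine, but it must be set up cleanly to avoid circularity with Lemma~\ref{lem:hyodo-M0diff}.
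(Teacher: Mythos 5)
There is a genuine gap at the very first step. You apply Lemma \ref{lem:dvr-diffinj} to the extensions $\ca{O}_{M_1}/\ca{O}_{L_0}$ of finite subextensions of $M/K_0$, justifying its separability hypothesis by the assertion that ``residue extensions of DVRs inside $\overline{K}$ over a fixed one are separable since $K_0$ already carries a $p$-base.'' This is false, and in fact backwards: precisely because $k$ is imperfect, the residue field extensions occurring in your directed system are typically purely inseparable. For instance $L_0=K_0$ and $M_1=K_0(w_{i1})$ give the residue extension $k(\overline{u_i}^{1/p})/k$, purely inseparable of degree $p$ since $\overline{u_i}$ belongs to a $p$-base of $k$; such extensions are unavoidable because your colimit must exhaust $M\supseteq M_0$. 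Consequently Lemma \ref{lem:dvr-diffinj} does not apply verbatim: its proof uses the separable residue extension to reduce to a monogenic, totally ramified situation, and extensions such as the integral closure of $\ca{O}_{K_0}$ in $K_0(w_{11},w_{21})$ (when $I$ has at least two elements) are not even monogenic over $\ca{O}_{K_0}$, their residue fields not being generated by one element over $k$. So the left-exactness of the cotangent sequence for $\ca{O}_{M_1}/\ca{O}_{L_0}/\bb{Z}_p$ --- which is the entire content of the lemma --- is left unjustified.

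The paper's proof is organized precisely to sidestep this. One first observes that $\ca{O}_{M_0}$ is henselian with \emph{perfect} residue field, so that $M/M_0$ has separable residue extension; one then splits $M/M_0$ into its maximal unramified part $M_{\mathrm{ur}}/M_0$ and the totally ramified part $M=M_{\mathrm{ur}}(\varpi)$, descends the minimal polynomial of the uniformizer $\varpi$ to a finite level $L_1\subseteq M_{\mathrm{ur}}$ so that $L_2=L_1(\varpi)$ is a \emph{totally ramified} extension of complete DVRs --- to which Lemma \ref{lem:dvr-diffinj} does apply, under its alternative ``$B/A$ totally ramified'' hypothesis --- and handles the unramified part by a separate colimit argument (where the relative differentials vanish). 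To keep your more direct route you would need an independent proof that $\ca{O}_{M_1}\otimes_{\ca{O}_{L_0}}\Omega^1_{\ca{O}_{L_0}/\bb{Z}_p}\to\Omega^1_{\ca{O}_{M_1}/\bb{Z}_p}$ is injective for extensions with inseparable residue fields. A smaller, fixable slip: $M$ is not finite over any finite subextension $L_0$ of $M_0/K_0$ (as $M\supseteq M_0$ and $M_0/K_0$ is infinite), so you cannot ``enlarge $L_0$ so that $M/L_0$ is finite''; the passage to the limit must genuinely be a double colimit over pairs $(L_0,M_1)$.
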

\begin{proof}
		We notice that $\ca{O}_{M_0}$ is a henselian discrete valuation ring with perfect residue field. Let $M_{\op{ur}}$ be the maximal unramified subextension of $M/M_0$, $f\in \ca{O}_{M_{\op{ur}}}[X]$ the monic minimal polynomial of a uniformizer $\varpi$ of $\ca{O}_M$. Then we have $\ca{O}_M=\ca{O}_{M_{\op{ur}}}[X]/(f(X))$. For a sufficiently large finite subextension $L_1$ of $M_{\op{ur}}/K_0$ such that $f\in \ca{O}_{L_1}[X]$, $L_2=L_1(\varpi)$ is totally ramified over $L_1$. The same argument as in \ref{prop:hyodo-K/Z-diff} gives us a canonical exact sequence
	\begin{align}
	\xymatrix{
		0\ar[r]&\ca{O}_{L_2}\otimes_{\ca{O}_{L_1}}\widehat{\Omega}_{\ca{O}_{L_1}}^1 \ar[r]&\widehat{\Omega}_{\ca{O}_{L_2}}^1\ar[r]& \Omega_{\ca{O}_{L_2}/\ca{O}_{L_1}}^1\ar[r]&0.
	}
	\end{align}
	By taking colimit over $L_1$, we get an exact sequence
	\begin{align}\label{eq:2.6.3}
	\xymatrix{
		0\ar[r]&\ca{O}_{M}\otimes_{\ca{O}_{M_{\op{ur}}}}\omhat{K_0}{M_{\op{ur}}}\ar[r]&\omhat{K_0}{M}\ar[r]& \Omega_{\ca{O}_M/\ca{O}_{M_{\op{ur}}}}^1\ar[r]&0.
	}
	\end{align}
	A similar colimit argument shows that $\omhat{K_0}{M_{\op{ur}}}=\ca{O}_{M_{\op{ur}}}\otimes_{\ca{O}_{M_0}}\omhat{K_0}{M_{0}}$. The conclusion follows from \eqref{eq:2.6.3}.
\end{proof}

\begin{myprop}[\cite{hyodo1986hodge} 4-2-2]\label{prop:hyodo-Kbar/Z}
	There is an exact sequence of $\ca{O}_{\overline{K}}$-$G_K$-modules which splits as a sequence of $\ca{O}_{\overline{K}}$-modules,
	\begin{align}\label{eq:hyodo-Kbar/Z}
	\xymatrix{
		0\ar[r]&\overline{K}/\ak{a}(1)\ar[r]^-{\vartheta}&\omhat{K}{\overline{K}}\ar[r]& \overline{K}\otimes_{\ca{O}_K}(\oplus_{i\in I}\ca{O}_K)^\wedge\ar[r]&0,
	}
	\end{align}
	where $\ak{a}=\{x\in \overline{K}\ |\ v_p(x)\geq-1/(p-1)\}$, and $\vartheta(p^{-k}\otimes (\zeta_n)_n)=\df\log \zeta_k$ for any $k\in \bb{N}$ and any $(\zeta_n)_n\in \bb{Z}_p(1)$. The map $\overline{K}\otimes_{\ca{O}_K}(\oplus_{i\in I}\ca{O}_K)^\wedge\to \omhat{K}{\overline{K}}$, sending $p^{-m}\otimes e_i$ to $\df\log w_{im}$ for any $i\in I$ and $m\in \bb{N}$, gives a splitting of the sequence.
\end{myprop}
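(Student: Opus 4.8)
The strategy is to combine the ``$p$-base'' computations of Lemmas \ref{lem:hyodo-M0diff} and \ref{lem:hyodo-M/M0} with a direct analysis of the differentials $\df\log\zeta_k$ of cyclotomic elements. Since $\overline{K}$ is also an algebraic closure of $K_0$ and every finite extension of $K_0$ inside $\overline{K}$ lies in a finite extension of $K$, we have $\omhat{K}{\overline{K}}=\omhat{K_0}{\overline{K}}$, so I would work over $K_0$. Applying Lemma \ref{lem:hyodo-M/M0} to all finite subextensions $M$ of $\overline{K}/M_0$ and taking the (exact) filtered colimit over them, then identifying the first term by Lemma \ref{lem:hyodo-M0diff} together with $\ca{O}_K\otimes_{\ca{O}_{K_0}}(\oplus_{i\in I}\ca{O}_{K_0})^\wedge\iso(\oplus_{i\in I}\ca{O}_K)^\wedge$ (valid because $\ca{O}_K$ is finite free over $\ca{O}_{K_0}$, cf.\ the proof of \ref{prop:hyodo-K/Z-diff}), one gets a canonical exact sequence of $\ca{O}_{\overline{K}}$-$G_K$-modules
\begin{align*}
\xymatrix{
0\ar[r]&\overline{K}\otimes_{\ca{O}_K}(\oplus_{i\in I}\ca{O}_K)^\wedge\ar[r]^-{j}&\omhat{K}{\overline{K}}\ar[r]^-{q}&\Omega_{\ca{O}_{\overline{K}}/\ca{O}_{M_0}}^1\ar[r]&0,
}
\end{align*}
with $j(p^{-m}\otimes e_i)=\df\log w_{im}$. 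It then remains to identify $\Omega_{\ca{O}_{\overline{K}}/\ca{O}_{M_0}}^1$ with $\overline{K}/\ak{a}(1)$ compatibly with $\vartheta$, and to split off this piece.

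To construct $\vartheta$, fix a generator $(\zeta_n)_n$ of $\bb{Z}_p(1)$, so each $\zeta_k$ is a primitive $p^k$-th root of unity (the general case of the formulas below then follows by $\bb{Z}_p$-linearity in the second variable). Differentiating $\Phi_{p^k}(\zeta_k)=0$, with $\Phi_{p^k}\in\bb{Z}[X]$ the $p^k$-th cyclotomic polynomial, gives $\Phi_{p^k}'(\zeta_k)\,\df\log\zeta_k=0$ in $\widehat{\Omega}_{\ca{O}_{K_1}}^1$ for any finite $K_1/K$ containing $\zeta_k$, hence in $\omhat{K}{\overline{K}}$; and the identity $\Phi_{p^k}'(\zeta_k)=p^k\zeta_k^{-1}(\zeta_1-1)^{-1}$ gives $v_p(\Phi_{p^k}'(\zeta_k))=k-1/(p-1)$. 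Since $\df\log\zeta_{k-1}=p\,\df\log\zeta_k$, the element $xp^k\df\log\zeta_k$ is independent of $k$ for fixed $x\in\overline{K}$, and it vanishes once $v_p(x)\geq-1/(p-1)$, because then $xp^k/\Phi_{p^k}'(\zeta_k)=x(\zeta_1-1)\zeta_k$ is integral. Therefore $x\otimes(\zeta_n)_n\mapsto xp^k\df\log\zeta_k$ defines an $\ca{O}_{\overline{K}}$-linear map $\vartheta\colon\overline{K}/\ak{a}(1)\to\omhat{K}{\overline{K}}$ with $\vartheta(p^{-k}\otimes(\zeta_n)_n)=\df\log\zeta_k$, and it is $G_K$-equivariant precisely because the Tate twist absorbs the relation $\sigma(\df\log\zeta_k)=\chi(\sigma)\,\df\log\zeta_k$, $\chi$ being the cyclotomic character.

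Next, set $\ol\vartheta:=q\circ\vartheta\colon\overline{K}/\ak{a}(1)\to\Omega_{\ca{O}_{\overline{K}}/\ca{O}_{M_0}}^1$; the heart of the proof is that $\ol\vartheta$ is an isomorphism. As $\ca{O}_{M_0}$ is a henselian discrete valuation ring with perfect residue field, for finite $M/M_0$ the module $\Omega_{\ca{O}_M/\ca{O}_{M_0}}^1$ is cyclic, $\cong\ca{O}_M/\ak{d}_{M/M_0}$; since $M_0/\bb{Q}_p$ is unramified we have $v_p(\ak{d}_{M_0(\zeta_k)/M_0})=v_p(\ak{d}_{\bb{Q}_p(\zeta_k)/\bb{Q}_p})=k-1/(p-1)$, and the standard exact sequence of differentials for $\ca{O}_{M_0}\subseteq\ca{O}_{M_0(\zeta_k)}\subseteq\ca{O}_M$ together with transitivity of differents shows that $\df\log\zeta_k$ has valuation $v_p(\ak{d}_{M/M_0(\zeta_k)})$ in $\Omega_{\ca{O}_M/\ca{O}_{M_0}}^1$. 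Hence $xp^k\df\log\zeta_k$ vanishes in $\Omega_{\ca{O}_M/\ca{O}_{M_0}}^1$ if and only if $v_p(xp^k)\geq k-1/(p-1)$, independently of $M$, which gives $\ke(\ol\vartheta)=\ak{a}(1)$, i.e.\ injectivity. For surjectivity, any element of $\Omega_{\ca{O}_{\overline{K}}/\ca{O}_{M_0}}^1$ comes from some $\Omega_{\ca{O}_M/\ca{O}_{M_0}}^1=\ca{O}_M\,\df\varpi_M$; choosing $k$ with $k-1/(p-1)\geq v_p(\ak{d}_{M/M_0})$, transitivity of differents gives $v_p(\df\varpi_M)\geq v_p(\df\log\zeta_k)$ in $\Omega_{\ca{O}_{M(\zeta_k)}/\ca{O}_{M_0}}^1$, so $\df\varpi_M\in\ca{O}_{M(\zeta_k)}\,\df\log\zeta_k\subseteq\im(\ol\vartheta)$.

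Granting that $\ol\vartheta$ is an isomorphism, $\vartheta\circ\ol\vartheta^{-1}$ is a section of $q$, so $\omhat{K}{\overline{K}}=\im\vartheta\oplus\im j$; since $q\circ\vartheta$ is injective so is $\vartheta$, and $\omhat{K}{\overline{K}}/\im\vartheta\cong\overline{K}\otimes_{\ca{O}_K}(\oplus_{i\in I}\ca{O}_K)^\wedge$ as $\ca{O}_{\overline{K}}$-$G_K$-modules — the $G_K$-equivariance here using that $u_i\in K_0$, whence $\df\log u_i$, is $G_K$-fixed, even though the splitting $j\colon p^{-m}\otimes e_i\mapsto\df\log w_{im}$ itself is only $\ca{O}_{\overline{K}}$-linear (the ambiguity $\sigma(\df\log w_{im})-\df\log w_{im}$ being a cyclotomic differential, hence in $\im\vartheta$). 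This produces the asserted exact sequence and its $\ca{O}_{\overline{K}}$-linear splitting. I expect the main obstacle to be the isomorphism $\Omega_{\ca{O}_{\overline{K}}/\ca{O}_{M_0}}^1\cong\overline{K}/\ak{a}(1)$, especially the surjectivity, which requires controlling how $\ak{d}_{M/M_0}$ and the generators of the cyclic modules $\Omega_{\ca{O}_M/\ca{O}_{M_0}}^1$ evolve along the tower; this is in essence Fontaine's perfect-residue-field computation \cite{fontaine1982formes}. The computation $v_p(\Phi_{p^k}'(\zeta_k))=k-1/(p-1)$ behind the well-definedness of $\vartheta$ is the other key point, but it is short.
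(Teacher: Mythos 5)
Your proof is correct and follows the same route as the paper's: take the filtered colimit of Lemma \ref{lem:hyodo-M/M0} over the finite subextensions $M$ of $\overline{K}/M_0$, identify the sub with $\overline{K}\otimes_{\ca{O}_K}(\oplus_{i\in I}\ca{O}_K)^\wedge$ via Lemma \ref{lem:hyodo-M0diff}, identify the quotient $\Omega^1_{\ca{O}_{\overline{K}}/\ca{O}_{M_0}}$ with $\overline{K}/\ak{a}(1)$, and use the resulting section $\vartheta$ to rewrite the extension with $\overline{K}/\ak{a}(1)$ as the submodule, checking $G_K$-equivariance on the conjugates of $\zeta_n$ and $w_{im}$. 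The only divergence is that where the paper simply cites Fontaine's Th\'eor\`eme 1$'$ for the isomorphism $\overline{K}/\ak{a}(1)\cong\Omega^1_{\ca{O}_{\overline{K}}/\ca{O}_{M_0}}$ (and builds the section by pulling back along $\Omega^1_{\overline{\bb{Z}_p}/\bb{Z}_p}\to\omhat{K}{\overline{K}}$), you re-derive that computation via $\Phi'_{p^k}(\zeta_k)$ and the transitivity of differents — a correct, self-contained, but more laborious path to the same statement.
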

\begin{proof}
	With the same notation as in {\rm\ref{lem:hyodo-M0diff}}, let $M$ run through all finite subextensions of $\overline{K}/M_0$. We get from \ref{lem:hyodo-M/M0} an exact sequence of $\ca{O}_{\overline{K}}$-modules
	\begin{align}\label{eq:2.4.6}
	\xymatrix{
		0\ar[r]&\ca{O}_{\overline{K}}\otimes_{\ca{O}_{M_0}}\omhat{K_0}{M_0}\ar[r]&\omhat{K}{\overline{K}}\ar[r]& \Omega_{\ca{O}_{\overline{K}}/\ca{O}_{M_0}}^1\ar[r]&0.
	}
	\end{align}
	We identify its first term with $\overline{K}\otimes_{\ca{O}_K}(\oplus_{i\in I}\ca{O}_K)^\wedge$ by \ref{lem:hyodo-M0diff}. Let $\overline{\bb{Q}_p}$ be the algebraic closure of $\bb{Q}_p$ in $\overline{K}$, $\overline{\bb{Z}_p}$ the integral closure of $\bb{Z}_p$ in $\overline{\bb{Q}_p}$. By Fontaine's computation (\cite{fontaine1982formes}, Th\'{e}or\`{e}me 1$'$), we have an isomorphism of $\overline{\bb{Z}_p}$-modules
	\begin{align}\label{eq:3.6.3.1}
	\overline{\bb{Q}_p}/\ak{a}_0(1)\iso \Omega_{\overline{\bb{Z}_p}/\bb{Z}_p}^1,\ p^{-k}\otimes (\zeta_n)_n\mapsto \df\log \zeta_k,\ \forall k\in \bb{N},
	\end{align}
	where $\ak{a}_0=\{x\in \overline{\bb{Q}_p}\ |\ v_p(x)\geq -1/(p-1)\}$, and we have an isomorphism of $\ca{O}_{\overline{K}}$-modules
	\begin{align}\label{eq:3.6.3}
	\overline{K}/\ak{a}(1)\iso\Omega_{\ca{O}_{\overline{K}}/\ca{O}_{M_0}}^1,\ p^{-k}\otimes (\zeta_n)_n\mapsto \df\log \zeta_k,\ \forall k\in \bb{N},
	\end{align}
	where $\ak{a}=\{x\in \overline{K}\ |\ v_p(x)\geq -1/(p-1)\}$. Hence, the composition of
	\begin{align}
	\overline{K}/\ak{a}(1)\iso\ca{O}_{\overline{K}}\otimes_{\overline{\bb{Z}_p}}\Omega_{\overline{\bb{Z}_p}/\bb{Z}_p}^1\longrightarrow \Omega_{\ca{O}_{\overline{K}}/\bb{Z}_p}^1\longrightarrow \omhat{K}{\overline{K}}
	\end{align} 
	gives a splitting of \eqref{eq:2.4.6}. Thus, we obtain the splitting sequence \eqref{eq:hyodo-Kbar/Z} of $\ca{O}_{\overline{K}}$-modules. We notice that the Galois conjugates of $\zeta_n, w_{im}$ are of the form $\zeta_n^a,\zeta_m^bw_{im}$ respectively, which implies that \eqref{eq:hyodo-Kbar/Z} is $G_K$-equivariant.
\end{proof}

\subsection{}
As $\omhat{K}{\overline{K}}$ is $p$-divisible, we have an exact sequence $0\to T_p(\omhat{K}{\overline{K}})\to V_p(\omhat{K}{\overline{K}})\to \omhat{K}{\overline{K}}\to 0$. After inverting $p$, we get an exact sequence
\begin{align}\label{eq:hyodo-fals}
\xymatrix{
	0\ar[r]& C(1)\ar[r]& \overline{K}\otimes_{\ca{O}_{\overline{K}}}V_p(\omhat{K}{\overline{K}})\ar[r]&\overline{K}\otimes_{\ca{O}_{\overline{K}}}\omhat{K}{\overline{K}}\ar[r]&0,
}
\end{align}
where we identified $\overline{K}\otimes_{\ca{O}_{\overline{K}}}T_p(\omhat{K}{\overline{K}})$ with $C(1)$ by \eqref{eq:hyodo-Kbar/Z}.

\begin{mythm}[\cite{hyodo1986hodge} Theorem 1 and Remark 3]\label{thm:hyodo-galcoh}\mbox{ }
	\begin{enumerate}[\rm (i)]
		\item The composition of
		\begin{align}\label{eq:hyodo-connmap}
		\xymatrix{
			K\otimes_{\ca{O}_K}\widehat{\Omega}_{\ca{O}_K}^1\ar[r]^-{\epsilon}&(\overline{K}\otimes_{\ca{O}_{\overline{K}}}\omhat{K}{\overline{K}})^{G_K}\ar[r]^-{\delta}&H^1(G_K,C(1)),
		}
		\end{align}
		where $\epsilon$ is the canonical map and $\delta$ is the connecting map associated to \eqref{eq:hyodo-fals}, is an isomorphism. Moreover, for any integer $q$, the cup product induces an isomorphism
		\begin{align}\label{eq:hyodo-galcoh2}
		(\wedge^q H^1(G_K,C(1)))^\wedge\iso H^q(G_K,C(q)).
		\end{align}
		\item The $K$-module $H^1(G_K,C)$ is free of rank $1$. Moreover, for any integer $q$, the cup product induces an isomorphism
		\begin{align}\label{eq:hyodo-galcoh3}
		H^1(G_K,C)\otimes_K (\wedge^{q-1} H^1(G_K,C(1)))^\wedge\iso H^q(G_K,C(q-1)).
		\end{align}
		\item For any integers $r$ and $q$ such that $r\neq q$ or $q-1$, we have $H^q(G_K,C(r))=0$.
	\end{enumerate}
\end{mythm}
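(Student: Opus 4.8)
The plan is to recall Hyodo's argument, which is modelled on Tate's computation \cite{tate1967p} in the perfect residue field case. The starting point is the tower $K\subseteq K_{\mathrm{cyc}}:=K(\mu_{p^\infty})\subseteq K_\infty:=K_{\mathrm{cyc}}\bigl((w_{im})_{i\in I,\,m\geq0}\bigr)\subseteq\ol{K}$, with $H_K=\op{Gal}(\ol{K}/K_\infty)$ and $\Gamma=\op{Gal}(K_\infty/K)$. One has an extension $1\to\Gamma'\to\Gamma\to\Gamma_0\to1$, where $\Gamma_0\subseteq\bb{Z}_p^\times$ is the open image of the cyclotomic character $\chi$ and $\Gamma'=\op{Gal}(K_\infty/K_{\mathrm{cyc}})\cong\prod_{i\in I}\bb{Z}_p$ is, by Kummer theory, isomorphic to $\bb{Z}_p(1)^I$ as a $\Gamma_0$-module. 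The continuous Hochschild--Serre spectral sequence for the normal subgroup $H_K\subseteq G_K$ reduces everything to a cohomology computation over $\Gamma$.

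The decisive local input --- and the step I expect to be the \emph{main obstacle} --- is that $K_\infty/K$ is deeply ramified: $C^{H_K}=\wh{K_\infty}$ ($p$-adic completion) and $H^q(H_K,C)=0$ for $q\geq1$. Since $\mu_{p^\infty}\subseteq K_\infty$, the group $H_K$ acts trivially on $\bb{Z}_p(1)$, so this gives $H^0(H_K,C(r))=\wh{K_\infty}(r)$ and $H^q(H_K,C(r))=0$ for all $q\geq1$ and $r\in\bb{Z}$. I would establish this by the Tate--Sen method: check that the normalized traces from $K_\infty$ down to its finite subextensions converge $p$-adically and that the differents of those subextensions over $K$ are unbounded, so that $\wh{K_\infty}\hookrightarrow C$ behaves just as the cyclotomic tower does in \cite{tate1967p}, the extra Kummer directions not spoiling the ramification estimates. (One could instead quote Faltings' almost purity, but we avoid that; cf.\ Remark \ref{rem:HTss}.) Granting this, Hochschild--Serre collapses to a canonical isomorphism $H^n(G_K,C(r))\cong H^n(\Gamma,\wh{K_\infty}(r))$.

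I would then compute $H^\bullet(\Gamma,\wh{K_\infty}(r))$ by peeling off $\Gamma'$ and then $\Gamma_0$, via the Hochschild--Serre sequence for $1\to\Gamma'\to\Gamma\to\Gamma_0\to1$. Tate's normalized-trace construction for $K_\infty/K_{\mathrm{cyc}}$ provides a $\Gamma$-stable topological decomposition $\wh{K_\infty}=\wh{K_{\mathrm{cyc}}}\oplus W$ with $\Gamma'$ acting trivially on $\wh{K_{\mathrm{cyc}}}$ and with some $\gamma-1$ ($\gamma\in\Gamma'$) invertible on $W$; hence $H^\bullet(\Gamma',W)=0$, and since $\Gamma'\cong\prod_{i\in I}\bb{Z}_p$ then acts trivially on $\wh{K_{\mathrm{cyc}}}$, a (completed) Koszul computation identifies $\bigoplus_aH^a(\Gamma',\wh{K_\infty})$ with the completed exterior algebra over $\wh{K_{\mathrm{cyc}}}$ on the free module $H^1(\Gamma',\wh{K_\infty})$, dual to the Kummer directions. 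As $\Gamma_0$ acts on $\Gamma'\cong\bb{Z}_p(1)^I$ through $\chi$, its $a$-th exterior piece, twisted by $C(r)$, carries an overall $\chi^{r-a}$ on $\wh{K_{\mathrm{cyc}}}$-coefficients. Running Tate's $\Gamma_0$-computation --- $H^b(\Gamma_0,\wh{K_{\mathrm{cyc}}}(s))$ equals $K$ for $b\in\{0,1\}$ if $s=0$ and vanishes if $s\neq0$, via a splitting $\wh{K_{\mathrm{cyc}}}=K\oplus W_0$ with $\gamma_0-1$ invertible on $W_0$ and $\chi(\gamma_0)^s-1$ invertible on $K$ for $s\neq0$ --- one sees that in total degree $n$ exactly two contributions survive: $a=n$, $b=0$, nonzero iff $r=n$, giving $(\wedge^nH^1(G_K,C(1)))^\wedge$; and $a=n-1$, $b=1$, nonzero iff $r=n-1$, giving $H^1(G_K,C)=H^1(\Gamma_0,K)\cong K$ tensored with $(\wedge^{n-1}H^1(G_K,C(1)))^\wedge$. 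This yields (iii), the rank-one statement of (ii), and --- using multiplicativity of cup product and of the Koszul structure --- the exterior-algebra form of the isomorphisms in (i) and (ii).

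It remains to render the identifications in (i)--(ii) canonical via differentials. Kummer theory gives $H^1(G_K,C(1))\cong(\ho_{\mathrm{cont}}(\Gamma',\bb{Z}_p)\otimes_{\bb{Z}_p}K)^\wedge\cong K\otimes_{\ca{O}_K}(\oplus_{i\in I}\ca{O}_K)^\wedge$, which by Proposition \ref{prop:hyodo-K/Z-diff} equals $K\otimes_{\ca{O}_K}\wh{\Omega}^1_{\ca{O}_K}$, the torsion quotient $\Omega^1_{\ca{O}_K/\ca{O}_{K_0}}$ dying after $\otimes_{\ca{O}_K}K$. To see that the composite $\delta\circ\epsilon$ of \eqref{eq:hyodo-connmap} realizes precisely this isomorphism, I would follow Proposition \ref{prop:hyodo-Kbar/Z}: the coboundary of the class of $\df\log w_{im}$ in $\ol{K}\otimes_{\ca{O}_{\ol{K}}}V_p(\omhat{K}{\ol{K}})$ is the Kummer cocycle $\sigma\mapsto(\sigma(w_{i,n})/w_{i,n})_n\in\bb{Z}_p(1)$, whose class is dual to the $i$-th Kummer direction of $\Gamma'$; and the codomain $(\ol{K}\otimes_{\ca{O}_{\ol{K}}}\omhat{K}{\ol{K}})^{G_K}$ of $\epsilon$ is computed from the $\ca{O}_{\ol{K}}$-linear splitting of \eqref{eq:hyodo-Kbar/Z} --- its torsion summand $\ol{K}/\ak{a}(1)$ dies after $\otimes_{\ca{O}_{\ol{K}}}\ol{K}$, and each $\df\log u_i$ is $G_K$-fixed. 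The cup-product isomorphisms of (i)--(ii) then follow since the connecting and Kummer maps are multiplicative and $H^1(G_K,C(1))$ is the $p$-adic completion of the free module on $(e_i)_{i\in I}$, so $(\wedge^q-)^\wedge$ is the natural target.
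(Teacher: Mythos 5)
The paper does not actually prove this theorem: it is quoted from Hyodo (\cite{hyodo1986hodge}, Theorem 1 and Remark 3), so there is no internal proof to compare against. Your sketch is a faithful reconstruction of Hyodo's own argument --- descent along the deeply ramified tower $K(\mu_{p^\infty},(w_{im})_{i,m})/K$ via normalized traces and different estimates (the step you rightly single out as the crux, and the one Hyodo carries out by explicit ramification computations on the finite layers rather than by almost purity), followed by a completed Koszul computation over $\Gamma'\cong\prod_{i\in I}\bb{Z}_p$ and Tate's one-dimensional computation over $\Gamma_0$, with the final identification of $\delta\circ\epsilon$ through the Kummer cocycles of the $w_{im}$, exactly as recorded in Remark \ref{rem:hyodo-H1}. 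At the level of detail appropriate for a cited result, the proposal is correct and follows the same route as the cited source.
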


\begin{myrem}\label{rem:hyodo-H1}
	By \ref{prop:hyodo-K/Z-diff} we have an isomorphism
	\begin{align}\label{eq:KhatOmegaK}
	K\otimes_{\ca{O}_K} (\oplus_{i\in I}\ca{O}_K)^\wedge\iso K\otimes_{\ca{O}_K}\widehat{\Omega}_{\ca{O}_K}^1,\ 1\otimes e_i\mapsto 1\otimes \df\log u_i,\ \forall i\in I.
	\end{align}
	By composing it with \eqref{eq:hyodo-connmap}, we get an isomorphism
	\begin{align}\label{eq:H1}
	K\otimes_{\ca{O}_K} (\oplus_{i\in I}\ca{O}_K)^\wedge\iso H^1(G_K,C(1)),\ 1\otimes e_i\mapsto [f_i],
	\end{align}
	where $f_i$ is a $1$-cocycle sending each $\sigma\in G_K$ to $\sigma(1\otimes(\df\log w_{im})_m)-1\otimes(\df\log w_{im})_m$ in view of \eqref{eq:hyodo-fals}.
\end{myrem}

\section{Faltings Extension}\label{sec:falext}
\begin{mylem}\label{lem:M/K-diff}
	Let $M=\bigcup_{i\in I,m\geq 0}K(w_{im})\subseteq \overline{K}$. Then there is an isomorphism of $\ca{O}_M$-modules
	\begin{align}\label{eq:M/K-diff}
	\oplus_{i\in I} M/\ca{O}_M\iso \Omega_{\ca{O}_M/\ca{O}_K}^1 ,\ p^{-m}e_i\mapsto \df\log w_{im},\ \forall i\in I,m\in \bb{N}.
	\end{align}
\end{mylem}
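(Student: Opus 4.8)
The plan is to realize both sides of \eqref{eq:M/K-diff} as filtered colimits over finite subextensions of $M/K$ and to compute $\Omega^1$ at each finite stage through an explicit presentation. Since $w_{im}=w_{iN}^{p^{N-m}}$ whenever $m\le N$, the subfields $L=L_{J,N}:=K(w_{iN}\ :\ i\in J)$, indexed by finite subsets $J\subseteq I$ and integers $N\ge 0$, form a cofinal family among the finite subextensions of $M/K$, and $\ca{O}_M=\colim_{J,N}\ca{O}_L$. Hence $\Omega_{\ca{O}_M/\ca{O}_K}^1=\colim_{J,N}\Omega_{\ca{O}_L/\ca{O}_K}^1$; and comparing $p$-adic valuations gives $p^{-N}\ca{O}_L\cap\ca{O}_M=\ca{O}_L$, so that $M/\ca{O}_M=\colim_{J,N}p^{-N}\ca{O}_L/\ca{O}_L$ and $\bigoplus_{i\in I}M/\ca{O}_M=\colim_{J,N}\bigoplus_{i\in J}p^{-N}\ca{O}_L/\ca{O}_L$. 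It therefore suffices to compute $\Omega_{\ca{O}_L/\ca{O}_K}^1$ together with its transition maps.

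The first step is to identify the ring $\ca{O}_L$. I claim that $\ca{O}_L=\ca{O}_K[w_{iN}\ :\ i\in J]\cong\ca{O}_K[X_i\ :\ i\in J]/(X_i^{p^N}-u_i\ :\ i\in J)$, and that each $w_{iN}$ is a unit of $\ca{O}_L$. One proves this by adjoining the generators one at a time: at a typical step one adjoins a root of $X^{p^N}-u_i$ to a discrete valuation ring $R$ already constructed, whose residue field is of the form $\kappa=k(\ol{u_j}^{1/p^N}\ :\ j\in J_0)$ with $i\notin J_0$. The $p$-base hypothesis on $(\ol{u_\ell})_{\ell\in I}$ forces $\ol{u_i}\notin\kappa^p$ --- a short computation in the basis $\{\prod_\ell\ol{u_\ell}^{e_\ell}\ :\ 0\le e_\ell<p^N,\ \text{almost all }e_\ell=0\}$ of $k$ over $k^{p^N}$, in the spirit of the proof of \ref{lem:hyodo-K0diff} --- and since $\op{char}\kappa=p$, the polynomial $X^{p^N}-\ol{u_i}$ is then irreducible over $\kappa$. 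Consequently, writing $B=R[X]/(X^{p^N}-u_i)$, the ring $B$ is finite free over the DVR $R$ with $B/\varpi_R B$ a field, so $B$ is Noetherian local of dimension one with principal maximal ideal $\varpi_R B$ generated by a nonzerodivisor; hence $B$ is a DVR, and being integrally closed it coincides with the valuation ring of $\op{Frac}(B)$. Iterating over $i\in J$ yields the claim, and $v_p(w_{iN})=p^{-N}v_p(u_i)=0$ shows $w_{iN}$ is a unit.

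The differential computation is now routine. From the presentation of $\ca{O}_L$, the module $\Omega_{\ca{O}_L/\ca{O}_K}^1$ is generated by the $\df w_{iN}$, $i\in J$, subject to the relations $p^Nw_{iN}^{p^N-1}\df w_{iN}=0$; since $w_{iN}$ is a unit, $\ca{O}_L\,\df w_{iN}=\ca{O}_L\,\df\log w_{iN}$ and this becomes $p^N\df\log w_{iN}=0$, so
\begin{align}
\Omega_{\ca{O}_L/\ca{O}_K}^1=\bigoplus_{i\in J}(\ca{O}_L/p^N\ca{O}_L)\,\df\log w_{iN}\cong\bigoplus_{i\in J}\ca{O}_L/p^N\ca{O}_L.
\end{align}
For a transition $L_{J,N}\subseteq L_{J',N'}$ (so $J\subseteq J'$, $N\le N'$), the identity $\df\log w_{iN}=\df\log(w_{iN'}^{p^{N'-N}})=p^{N'-N}\,\df\log w_{iN'}$ shows the $i$-th summand maps by $\ca{O}_{L_{J,N}}/p^N\to\ca{O}_{L_{J',N'}}/p^{N'}$, $1\mapsto p^{N'-N}$; under the identification $\ca{O}_L/p^N\ca{O}_L\iso p^{-N}\ca{O}_L/\ca{O}_L$, $a\mapsto p^{-N}a$, this is exactly the inclusion $p^{-N}\ca{O}_{L_{J,N}}/\ca{O}_{L_{J,N}}\inj p^{-N'}\ca{O}_{L_{J',N'}}/\ca{O}_{L_{J',N'}}$. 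Passing to the colimit over $(J,N)$ then produces the isomorphism $\bigoplus_{i\in I}M/\ca{O}_M\iso\Omega_{\ca{O}_M/\ca{O}_K}^1$, and unwinding the identifications shows it carries $p^{-m}e_i$ to $\df\log w_{im}$, as required. (For $i$ in no $J$ the statement is vacuous, consistently with $\df\log w_{i0}=\df\log u_i=0$ and $p^{0}e_i=0$ in $M/\ca{O}_M$.)

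\textbf{Main obstacle.} The one point that is not formal bookkeeping is the structural identification $\ca{O}_{L_{J,N}}=\ca{O}_K[w_{iN}:i\in J]$: one must know that adjoining $p^N$-th roots of the $u_i$ requires no further normalization, i.e.\ that these extensions are ``purely residual'' ($e=1$, residue degree $p^{N|J|}$). This rests squarely on the $p$-base hypothesis on $(\ol{u_i})_{i\in I}$, via $\ol{u_i}\notin\kappa^p$ for the residue fields $\kappa$ that occur, and is the same mechanism already underlying \ref{lem:hyodo-K0diff} and \ref{lem:hyodo-M0diff}. Everything after that is a direct presentation-plus-colimit computation.
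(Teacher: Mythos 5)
Your proposal is correct and follows essentially the same route as the paper: identify $\ca{O}_{K(w_{iN}:i\in J)}$ as $\ca{O}_K[X_i]/(X_i^{p^N}-u_i)$ using the $p$-base hypothesis (the paper phrases this as linear independence of the monomials $\prod\ol{w_{iN}}^{k_i}$ over $k$, you as irreducibility of $X^{p^N}-\ol{u_i}$ plus a DVR argument, which is the same mechanism), read off $\Omega^1_{\ca{O}_L/\ca{O}_K}=\oplus_i\ca{O}_L/p^N\ca{O}_L\cdot\df\log w_{iN}$ from the presentation, and pass to the colimit. The only cosmetic difference is that the paper takes all of $I$ at level $N$ at once and colimits over $N$, whereas you colimit over finite $J$ and $N$; your extra care with the transition maps and the unit claim for $w_{iN}$ is welcome but not a different idea.
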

\begin{proof}
	 For any $N\geq 0$, we set $M_N=\bigcup_{i\in I}K(w_{iN})$. Since $(\overline{u_i})$ form a $p$-base of the residue field $k$, the elements of the form $\prod_{i\in I} \overline{w_{iN}}^{k_i}$ where $0\leq k_i<p^N$ with finitely many nonvanishing, are linearly independent over $k$. Therefore, $\ca{O}_{M_N}=\ca{O}_K[T_i]_{i\in I}/(T_i^{p^N}-u_i)$, where $T_i$ maps to $w_{iN}$.
	 Hence,
	 \begin{align}
	 \Omega_{\ca{O}_{M_N}/\ca{O}_K}^1=\oplus_{i\in I}\ca{O}_{M_N}/p^N\ca{O}_{M_N}=\oplus_{i\in I}p^{-N}\ca{O}_{M_N}/\ca{O}_{M_N},
	 \end{align}
	 where $p^{-N}e_i$ corresponds to $\df\log w_{iN}$. The conclusion follows by taking colimit over $N$.
\end{proof}

\begin{myprop}\label{prop:falext-M/K}
	With the same notation as in {\rm \ref{lem:M/K-diff}}, there is an exact sequence of $\ca{O}_{\overline{K}}$-modules
	\begin{align}\label{eq:falext-M/K}
	\xymatrix{
		0\ar[r]& \oplus_{i\in I} \overline{K}/\ca{O}_{\overline{K}}\ar[r]^-{\theta}& \Omega_{\ca{O}_{\overline{K}}/\ca{O}_K}^1\ar[r]&\overline{K}/\ak{b}(1)\ar[r]&0,
	}
	\end{align}
	where $\theta(p^{-m} e_i)=\df\log w_{im}$ for any $i\in I$ and $m\in\bb{N}$, and $\ak{b}=\{x\in \overline{K}\ |\ v_p(x)\geq -v_p(\ak{D}_{M/M_1})-1/(p-1)\}$, where $M_1$ is the fraction field of the Witt ring with coefficients in the residue field of $M$, and $\ak{D}_{M/M_1}$ is the different ideal of $M/M_1$.
\end{myprop}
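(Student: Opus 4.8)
The plan is to identify \eqref{eq:falext-M/K} with the fundamental exact sequence of the tower $\ca{O}_K\subseteq\ca{O}_M\subseteq\ca{O}_{\overline{K}}$ once its two outer terms are pinned down. The key preliminary observation will be that $\ca{O}_M$ is a \emph{Henselian discrete valuation ring with perfect residue field} $k^{1/p^\infty}$: each finite subextension $K(w_{i_1N},\dots,w_{i_rN})/K$ has ramification index $1$ (for instance $\ca{O}_{K(w_{iN})}=\ca{O}_K[T]/(T^{p^N}-u_i)$ has residue field $k(\overline{u_i}^{1/p^N})$ of degree $p^N=[K(w_{iN}):K]$ over $k$), so the value group of $M$ equals that of $K$ and $\ca{O}_M$ is a filtered union of complete --- hence Henselian --- discrete valuation rings. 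The same holds for $M_0=\bigcup_{i,m}K_0(w_{im})$ of \ref{lem:hyodo-M0diff}, which is moreover absolutely unramified with residue field $k^{1/p^\infty}$; its completion is $W(k^{1/p^\infty})$, so $M_1$ is the completion of $M_0$ and in particular $v_p(\ak{D}_{M/M_1})=v_p(\ak{D}_{M/M_0})$.

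For the outer terms I would argue as follows. By \ref{lem:M/K-diff}, $\Omega_{\ca{O}_M/\ca{O}_K}^1\cong\oplus_{i\in I}M/\ca{O}_M$ with $\df\log w_{im}$ corresponding to $p^{-m}e_i$, and flat base change along $\ca{O}_M\to\ca{O}_{\overline{K}}$ (using $M=\ca{O}_M[1/p]$ and that $\ca{O}_{\overline{K}}$ is torsion-free over the valuation ring $\ca{O}_M$) identifies $\ca{O}_{\overline{K}}\otimes_{\ca{O}_M}\Omega_{\ca{O}_M/\ca{O}_K}^1$ with $\oplus_{i\in I}\overline{K}/\ca{O}_{\overline{K}}$, so the map $\theta$ in the fundamental exact sequence
\[
\oplus_{i\in I}\overline{K}/\ca{O}_{\overline{K}}\ \xrightarrow{\ \theta\ }\ \Omega_{\ca{O}_{\overline{K}}/\ca{O}_K}^1\ \lto\ \Omega_{\ca{O}_{\overline{K}}/\ca{O}_M}^1\ \lto\ 0
\]
indeed sends $p^{-m}e_i$ to $\df\log w_{im}$. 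It then remains to show that $\theta$ is injective and that the cokernel $\Omega_{\ca{O}_{\overline{K}}/\ca{O}_M}^1$ is isomorphic to $\overline{K}/\ak{b}(1)$.

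Injectivity of $\theta$ I would deduce from \ref{lem:dvr-diffinj}: for each finite $M'/M$ inside $\overline{K}$, the extension $\ca{O}_{M'}/\ca{O}_M$ is a finite extension of discrete valuation rings with separable fraction‑field extension (characteristic $0$) and separable residue‑field extension (a finite extension of the perfect field $k^{1/p^\infty}$), and $\ca{O}_M$ is Henselian, so \ref{lem:dvr-diffinj} with $R=\ca{O}_K$ gives $\ca{O}_{M'}\otimes_{\ca{O}_M}\Omega_{\ca{O}_M/\ca{O}_K}^1\inj\Omega_{\ca{O}_{M'}/\ca{O}_K}^1$; passing to the filtered colimit over $M'$ yields the injectivity of $\theta$. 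For the cokernel I would use the tower $\ca{O}_{M_0}\subseteq\ca{O}_M\subseteq\ca{O}_{\overline{K}}$: since $M/M_0$ is finite totally ramified (equal residue fields, and by the first step all ramification of $M$ over $M_0$ comes from $K/K_0$) it is monogenic, $\Omega_{\ca{O}_M/\ca{O}_{M_0}}^1\cong\ca{O}_M/\ak{D}_{M/M_0}$, and \ref{lem:dvr-diffinj} (now with $R=\ca{O}_{M_0}$) together with the same colimit argument turns the fundamental sequence of $\ca{O}_{M_0}\to\ca{O}_M\to\ca{O}_{\overline{K}}$ into a short exact sequence
\[
0\lto\ca{O}_{\overline{K}}/\ak{D}_{M/M_0}\ca{O}_{\overline{K}}\lto\Omega_{\ca{O}_{\overline{K}}/\ca{O}_{M_0}}^1\lto\Omega_{\ca{O}_{\overline{K}}/\ca{O}_M}^1\lto0.
\]
By the computation made in the proof of \ref{prop:hyodo-Kbar/Z} (see \eqref{eq:3.6.3}) the middle term is $\overline{K}/\ak{a}(1)$ with $\ak{a}=\{x:v_p(x)\geq-1/(p-1)\}$; the image of the left term is a cyclic $\ca{O}_{\overline{K}}$‑submodule annihilated exactly by $\ak{D}_{M/M_0}\ca{O}_{\overline{K}}=\{x:v_p(x)\geq v_p(\ak{D}_{M/M_0})\}$, hence contained in the submodule $\ak{b}/\ak{a}(1)$, which is itself cyclic and abstractly isomorphic to $\ca{O}_{\overline{K}}/\ak{D}_{M/M_0}\ca{O}_{\overline{K}}$ (the value group of $\overline{K}$ being $\bb{Q}$). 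An injection between two such isomorphic cyclic torsion modules over the valuation ring $\ca{O}_{\overline{K}}$ is necessarily an isomorphism, so the image is all of $\ak{b}/\ak{a}(1)$ and $\Omega_{\ca{O}_{\overline{K}}/\ca{O}_M}^1\cong(\overline{K}/\ak{a}(1))/(\ak{b}/\ak{a}(1))=\overline{K}/\ak{b}(1)$, which combined with the injectivity of $\theta$ gives \eqref{eq:falext-M/K}.

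The hard part is the initial structural observation that $\ca{O}_M$ and $\ca{O}_{M_0}$ are Henselian \emph{discrete} valuation rings: the subtlety is that the extensions $K(w_{iN})/K$ are ``unramified'' only in the weak sense $e=1$, their residue extensions being purely inseparable, and it is precisely this that licenses the use of \ref{lem:dvr-diffinj} --- without it a naive dévissage of $\Omega_{\ca{O}_{\overline{K}}/\ca{O}_K}^1$ would be blocked by inseparable residue extensions. The only remaining technical point is the identification of the image of $\ca{O}_{\overline{K}}/\ak{D}_{M/M_0}\ca{O}_{\overline{K}}$ with $\ak{b}/\ak{a}(1)$ inside $\overline{K}/\ak{a}(1)$, which is a short computation with cyclic torsion modules over $\ca{O}_{\overline{K}}$.
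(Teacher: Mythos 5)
Your proof is correct and follows the same route as the paper: both arguments rest on the observation that $\ca{O}_M$ is a henselian discrete valuation ring with perfect residue field, obtain left-exactness of the fundamental sequence of $\ca{O}_{\overline{K}}/\ca{O}_M/\ca{O}_K$ from \ref{lem:dvr-diffinj} (via a filtered colimit over finite subextensions), and identify the left-hand term with $\oplus_{i\in I}\overline{K}/\ca{O}_{\overline{K}}$ by \ref{lem:M/K-diff}. The only divergence is at the cokernel: the paper applies Fontaine's Th\'eor\`eme 1$'$ directly to the henselian base $\ca{O}_M$ to get $\Omega_{\ca{O}_{\overline{K}}/\ca{O}_M}^1\cong\overline{K}/\ak{b}(1)$, whereas you apply it only to the absolutely unramified field $M_0$ (i.e.\ \eqref{eq:3.6.3}) and then recover the case of $M$ by a further d\'evissage through $\ca{O}_{M_0}\subseteq\ca{O}_M\subseteq\ca{O}_{\overline{K}}$, using $\Omega_{\ca{O}_M/\ca{O}_{M_0}}^1\cong\ca{O}_M/\ak{D}_{M/M_0}$ and a short computation with cyclic torsion modules over $\ca{O}_{\overline{K}}$; this is a correct and slightly more self-contained variant of the same step, since it also makes explicit why $v_p(\ak{D}_{M/M_1})=v_p(\ak{D}_{M/M_0})$.
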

\begin{proof}
	 We notice that $\ca{O}_M$ is a henselian discrete valuation ring with perfect residue field. Thus, the sequence of modules of differentials of $\ca{O}_{\overline{K}}/\ca{O}_M/\ca{O}_K$,
	\begin{align}\label{eq:falext-M/K2}
	\xymatrix{
		0\ar[r]& \ca{O}_{\overline{K}}\otimes_{\ca{O}_M}\Omega_{\ca{O}_M/\ca{O}_K}^1\ar[r]& \Omega_{\ca{O}_{\overline{K}}/\ca{O}_K}^1\ar[r]&\Omega_{\ca{O}_{\overline{K}}/\ca{O}_M}^1\ar[r]&0,
	}
	\end{align}
	is exact by \ref{lem:dvr-diffinj}. We identify its first term with $\oplus_{i\in I} \overline{K}/\ca{O}_{\overline{K}}$ by \ref{lem:M/K-diff}. By Fontaine's computation (\cite{fontaine1982formes}, Th\'{e}or\`{e}me 1$'$), we have an isomorphism of $\ca{O}_{\overline{K}}$-modules
	\begin{align}\label{eq:Kbar/M}
	\overline{K}/\ak{b}(1)\iso \Omega_{\ca{O}_{\overline{K}}/\ca{O}_M}^1,\ p^{-k}\otimes (\zeta_n)_n\mapsto \df\log \zeta_k,\ \forall k\in \bb{N},\ \forall (\zeta_n)_n\in \bb{Z}_p(1).
	\end{align}
	The conclusion follows from \eqref{eq:falext-M/K2}.
\end{proof}

\begin{mylem}\label{lem:G-invCI}
	The canonical map
	\begin{align}\label{eq:G-invCI}
	K\otimes_{\ca{O}_K}(\oplus_{i\in I}\ca{O}_K)^\wedge\longrightarrow (C\otimes_{\ca{O}_C}(\oplus_{i\in I}\ca{O}_C)^\wedge)^{G_K}
	\end{align}
	is an isomorphism.
\end{mylem}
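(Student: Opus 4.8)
The plan is to make the completed module $(\oplus_{i\in I}\ca{O}_C)^\wedge$ fully explicit and then read off the $G_K$-invariants coordinate by coordinate, so that no cohomology beyond $C^{G_K}=K$ is needed.

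First I would note that for a $p$-adically complete valuation ring $\ca{O}$ (here $\ca{O}_K$ or $\ca{O}_C$) one has $(\oplus_{i\in I}\ca{O})^\wedge=\plim_n\oplus_{i\in I}\ca{O}/p^n\ca{O}$, and since each $\oplus_{i\in I}\ca{O}/p^n\ca{O}\hookrightarrow\prod_{i\in I}\ca{O}/p^n\ca{O}$ and $\plim$ is left exact, $(\oplus_{i\in I}\ca{O})^\wedge$ is identified with the submodule of $\prod_{i\in I}\ca{O}$ consisting of families $(a_i)_{i\in I}$ such that for every $n$ all but finitely many $a_i$ lie in $p^n\ca{O}$. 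In this description $G_K$ acts coordinatewise on $(\oplus_{i\in I}\ca{O}_C)^\wedge$; since every $\sigma\in G_K$ preserves $v_p$, it preserves the ``null family'' condition, and a family is $G_K$-fixed precisely when each $a_i\in\ca{O}_C^{G_K}$.

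Next I would invoke $C^{G_K}=K$ (Ax--Sen--Tate), whence $\ca{O}_C^{G_K}=K\cap\ca{O}_C=\ca{O}_K$ and, because the $p$-adic topology on $\ca{O}_K$ is induced from that of $\ca{O}_C$, $\big((\oplus_{i\in I}\ca{O}_C)^\wedge\big)^{G_K}=(\oplus_{i\in I}\ca{O}_K)^\wedge$. To obtain the stated form with $C$ on the right I would pass to $[1/p]$: writing $N=(\oplus_{i\in I}\ca{O}_C)^\wedge$, this module is $p$-torsion free (it lies in $\prod_{i\in I}\ca{O}_C$), so $C\otimes_{\ca{O}_C}N=N[1/p]$; and any $G_K$-fixed $x\in N[1/p]$ is $p^{-m}y$ with $y=p^m x\in N$ again $G_K$-fixed, so $(N[1/p])^{G_K}=N^{G_K}[1/p]=K\otimes_{\ca{O}_K}(\oplus_{i\in I}\ca{O}_K)^\wedge$. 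Tracing the identifications shows this equality is realized by the canonical map \eqref{eq:G-invCI}.

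I do not expect a real obstacle here. The only points needing care are the bookkeeping that identifies the completed direct sum over a possibly infinite $I$ with the module of $p$-adically null families inside $\prod_{i\in I}\ca{O}_C$---and the compatibility of that identification with both the $G_K$-action and with inverting $p$---together with the input $C^{G_K}=K$. One could alternatively reduce modulo $p^n$ and take $\plim$, but then one must bound the $p$-power torsion in $H^1(G_K,\ca{O}_C)$ that enters $(\ca{O}_C/p^n)^{G_K}/(\ca{O}_K/p^n)$; the coordinatewise argument sidesteps this.
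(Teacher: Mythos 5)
Your proposal is correct and follows essentially the same route as the paper: the paper's proof likewise identifies $C\otimes_{\ca{O}_C}(\oplus_{i\in I}\ca{O}_C)^\wedge$ and $K\otimes_{\ca{O}_K}(\oplus_{i\in I}\ca{O}_K)^\wedge$ with the modules of $p$-adically null families in $\prod_{i\in I}C$ and $\prod_{i\in I}K$ respectively, and then reads off the $G_K$-invariants coordinatewise using $C^{G_K}=K$. Your write-up just makes explicit the bookkeeping (left exactness of $\plim$, torsion-freeness, commuting invariants with inverting $p$) that the paper leaves implicit.
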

\begin{proof}
	It follows from the following descriptions
	\begin{align}
	\label{eq:sum-i-comp1}C\otimes_{\ca{O}_C}(\oplus_{i\in I}\ca{O}_C)^\wedge=&\{(x_i)\in\prod\nolimits_{i\in I}C\ |\ \forall N>0,\ \exists \text{ finite }J\subseteq I, |x_i|_p<1/N,\ \forall i\notin J\},\\
	\label{eq:sum-i-comp2}K\otimes_{\ca{O}_K}(\oplus_{i\in I}\ca{O}_K)^\wedge=&\{(x_i)\in\prod\nolimits_{i\in I}K\ |\ \forall N>0,\ \exists \text{ finite }J\subseteq I, |x_i|_p<1/N,\ \forall i\notin J\}.
	\end{align}
\end{proof}

\begin{mythm}\label{thm:falext}
	There is a canonical exact sequence of $C$-$G_K$-modules which splits as a sequence of $C$-modules,
	\begin{align}\label{eq:can-falext}
	\xymatrix{
		0\ar[r]& C(1)\ar[r]^-{\iota}&V_p(\Omega_{\ca{O}_{\overline{K}}/\ca{O}_K}^1)\ar[r]^-{\nu}&C\otimes_{\ca{O}_C}(\ca{O}_{\overline{K}}\otimes_{\ca{O}_K} \Omega_{\ca{O}_K/\bb{Z}_p}^1)^\wedge\ar[r]&0,
	}
	\end{align}
	where $\iota(1\otimes(\zeta_n)_n)=(\df\log\zeta_n)_n$ for any $(\zeta_n)_n\in \bb{Z}_p(1)$. There is an isomorphism of $C$-$G_K$-modules
	\begin{align}\label{eq:4.4.2}
	C\otimes_{\ca{O}_C}(\oplus_{i\in I}\ca{O}_C)^\wedge \iso C\otimes_{\ca{O}_C}(\ca{O}_{\overline{K}}\otimes_{\ca{O}_K} \Omega_{\ca{O}_K/\bb{Z}_p}^1)^\wedge,\ 1\otimes e_i\mapsto 1\otimes 1\otimes \df\log u_i,\ \forall i\in I,
	\end{align}
	and the map $C\otimes_{\ca{O}_C}(\oplus_{i\in I}\ca{O}_C)^\wedge\to V_p(\Omega_{\ca{O}_{\overline{K}}/\ca{O}_K}^1)$, sending $1\otimes e_i$ to $(\df\log w_{im})_m$ for any $i\in I$, gives a $C$-linear section of $\nu$.	
\end{mythm}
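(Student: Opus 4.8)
The plan is to produce \eqref{eq:can-falext} by applying $T_p$ to the exact sequence of \ref{prop:falext-M/K}, and then to reorganize the two-step filtration it yields by inserting the canonical map $\iota$, which is ``transverse'' to both steps. First I would pin down the target module. By \ref{prop:hyodo-K/Z-diff} there is an exact sequence $0\to(\oplus_{i\in I}\ca{O}_K)^\wedge\to\wh{\Omega}_{\ca{O}_K}^1\to\Omega_{\ca{O}_K/\ca{O}_{K_0}}^1\to 0$ in which the last term is killed by a power of $p$ (since $K/K_0$ is finite of characteristic $0$, so $\Omega_{\ca{O}_K/\ca{O}_{K_0}}^1$ is a finitely generated torsion $\ca{O}_K$-module). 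Tensoring with $\ca{O}_{\ol{K}}$ over the discrete valuation ring $\ca{O}_K$ (hence flatly), then completing $p$-adically (which preserves exactness, as the cokernel is already $p$-adically complete, \cite{stacks-project}), then inverting $p$, kills the torsion term and, using $(\ca{O}_{\ol{K}}\otimes_{\ca{O}_K}(\oplus_{i\in I}\ca{O}_K)^\wedge)^\wedge=(\oplus_{i\in I}\ca{O}_C)^\wedge$, identifies the $C$-$G_K$-module $C\otimes_{\ca{O}_C}(\ca{O}_{\ol{K}}\otimes_{\ca{O}_K}\Omega_{\ca{O}_K/\bb{Z}_p}^1)^\wedge$ with $C\otimes_{\ca{O}_C}(\oplus_{i\in I}\ca{O}_C)^\wedge$, the element $1\otimes 1\otimes\df\log u_i$ corresponding to $1\otimes e_i$; this is \eqref{eq:4.4.2}, and from now on I replace the right-hand term of \eqref{eq:can-falext} accordingly.

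Next I would apply $T_p=\ho_{\bb{Z}}(\bb{Z}[1/p]/\bb{Z},-)$ to the exact sequence of \ref{prop:falext-M/K}. The module $\oplus_{i\in I}\ol{K}/\ca{O}_{\ol{K}}$ is $p$-divisible and its $p^n$-torsion subgroups have surjective transition maps, so $\ext_{\bb{Z}}^1(\bb{Z}[1/p]/\bb{Z},\oplus_{i\in I}\ol{K}/\ca{O}_{\ol{K}})=0$ by Mittag--Leffler, and $T_p$ yields a short exact sequence $0\to T_p(\oplus_{i\in I}\ol{K}/\ca{O}_{\ol{K}})\to T_p(\Omega_{\ca{O}_{\ol{K}}/\ca{O}_K}^1)\to T_p(\ol{K}/\ak{b}(1))\to 0$. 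A direct computation via the isomorphisms $p^{-n}\ca{O}_{\ol{K}}/\ca{O}_{\ol{K}}\iso\ca{O}_{\ol{K}}/p^n\ca{O}_{\ol{K}}$ gives $T_p(\oplus_{i\in I}\ol{K}/\ca{O}_{\ol{K}})=(\oplus_{i\in I}\ca{O}_C)^\wedge$, with $e_i$ corresponding via $\theta$ to the tower $(\df\log w_{im})_m$; and Fontaine's isomorphism $\ol{K}/\ak{b}(1)\iso\Omega_{\ca{O}_{\ol{K}}/\ca{O}_M}^1$ used in the proof of \ref{prop:falext-M/K} gives $V_p(\ol{K}/\ak{b}(1))=C(1)$, with $1\otimes(\zeta_n)_n$ corresponding to $(\df\log\zeta_n)_n$. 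Since $\Omega_{\ca{O}_{\ol{K}}/\ca{O}_K}^1$ is $p$-primary torsion, $V_p(\Omega_{\ca{O}_{\ol{K}}/\ca{O}_K}^1)=T_p(\Omega_{\ca{O}_{\ol{K}}/\ca{O}_K}^1)\otimes_{\bb{Z}_p}\bb{Q}_p$ is a $C$-module (its $T_p$ is a $p$-adically complete $\ca{O}_{\ol{K}}$-module, hence an $\ca{O}_C$-module), and inverting $p$ produces an exact sequence of $C$-modules $0\to C\otimes_{\ca{O}_C}(\oplus_{i\in I}\ca{O}_C)^\wedge\xrightarrow{\ \alpha\ }V_p(\Omega_{\ca{O}_{\ol{K}}/\ca{O}_K}^1)\xrightarrow{\ \beta\ }C(1)\to 0$ with $\alpha(1\otimes e_i)=(\df\log w_{im})_m$. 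This sequence is \emph{only} $G_M$-equivariant, because $\ca{O}_M$ is not stable under $G_K$.

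Now I would insert $\iota\colon C(1)\to V_p(\Omega_{\ca{O}_{\ol{K}}/\ca{O}_K}^1)$, $1\otimes(\zeta_n)_n\mapsto(\df\log\zeta_n)_n$. This map is $C$-linear, it is $G_K$-equivariant because $\df\log$ of $p$-power roots of unity transforms like the Tate twist, and $\beta\circ\iota=\id_{C(1)}$ by the same Fontaine isomorphism; hence $\iota$ is a $C$-linear section of $\beta$ and $V_p(\Omega_{\ca{O}_{\ol{K}}/\ca{O}_K}^1)=\iota(C(1))\oplus\im(\alpha)$ as $C$-modules. Taking $\nu$ to be the quotient onto $Q:=V_p(\Omega_{\ca{O}_{\ol{K}}/\ca{O}_K}^1)/\iota(C(1))$ gives an exact sequence of $C$-$G_K$-modules $0\to C(1)\xrightarrow{\iota}V_p(\Omega_{\ca{O}_{\ol{K}}/\ca{O}_K}^1)\xrightarrow{\nu}Q\to 0$, split over $C$ since $\im(\alpha)\hookrightarrow V_p(\Omega_{\ca{O}_{\ol{K}}/\ca{O}_K}^1)\twoheadrightarrow Q$ is a $C$-linear isomorphism. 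From this splitting, $Q\iso C\otimes_{\ca{O}_C}(\oplus_{i\in I}\ca{O}_C)^\wedge$ as $C$-modules, with $\bar\nu_i:=\nu(\alpha(1\otimes e_i))\leftrightarrow 1\otimes e_i$. The crucial observation is that this is moreover a $G_K$-isomorphism for the standard semilinear action fixing each $e_i$: if $c_i(\sigma)=(\sigma(w_{im})/w_{im})_m\in\bb{Z}_p(1)$ denotes the Kummer cocycle of $\sigma\in G_K$ at $i$, then $\sigma(\alpha(1\otimes e_i))=\alpha(1\otimes e_i)+\iota(1\otimes c_i(\sigma))$, so $\sigma(\bar\nu_i)=\bar\nu_i$ in $Q$; since $\iota$ and $\nu$ are continuous and finite $C$-combinations of the $\bar\nu_i$ are dense in $Q$, the $G_K$-action on $Q$ is through $C$ only. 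Feeding this together with \eqref{eq:4.4.2} back into the sequence rewrites it as \eqref{eq:can-falext}, with $\bar\nu_i\leftrightarrow 1\otimes 1\otimes\df\log u_i$; under these identifications $\alpha$ becomes the map $1\otimes e_i\mapsto(\df\log w_{im})_m$ with $\nu\circ\alpha=\id$, which is precisely the asserted $C$-linear section of $\nu$.

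The hard part is the final step: recognizing that the obstruction to $G_K$-equivariance of the natural decomposition of $V_p(\Omega_{\ca{O}_{\ol{K}}/\ca{O}_K}^1)$ coming from \ref{prop:falext-M/K} — equivalently, the non-$G_K$-stability of $\ca{O}_M$ — is exactly the Kummer cocycle $c_i(\sigma)$, which takes values in $\iota(C(1))$ and hence dies in $Q$, so that the passage from the merely $G_M$-split sequence to the genuinely $G_K$-equivariant Faltings extension is carried entirely by $\iota$. A secondary point requiring care is the infinite direct sum $\oplus_{i\in I}$ in the middle step, both for the $\ext^1$-vanishing and for the explicit description of $T_p(\oplus_{i\in I}\ol{K}/\ca{O}_{\ol{K}})$, where the descriptions recorded in \ref{lem:G-invCI} (see \eqref{eq:sum-i-comp1}) are convenient.
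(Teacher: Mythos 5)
Your proof is correct, but it reaches \eqref{eq:can-falext} by a different route than the paper. The paper applies $T_p$ to the canonical sequence of differentials $\ca{O}_{\overline{K}}\otimes_{\ca{O}_K}\widehat{\Omega}_{\ca{O}_K}^1\xrightarrow{\alpha}\omhat{K}{\overline{K}}\xrightarrow{\beta}\Omega_{\ca{O}_{\overline{K}}/\ca{O}_K}^1\to 0$ (fitting it into a $3\times 3$ diagram built from \ref{prop:hyodo-K/Z-diff}, \ref{prop:hyodo-Kbar/Z} and \ref{prop:falext-M/K}, with $D=\im(\alpha)$ handling the bounded-torsion discrepancy), so that $\iota$, $\nu$ and the $G_K$-equivariance are all automatic from functoriality, and only then uses the $T_p$ of \ref{prop:falext-M/K} to produce the $C$-linear splitting and \eqref{eq:4.4.2}. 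You instead start from the $T_p$ of \ref{prop:falext-M/K} alone and rotate the resulting (non-equivariant) extension around $\iota$, recovering the $G_K$-structure of the quotient $Q$ from the Kummer-cocycle identity $\sigma(\alpha(1\otimes e_i))=\alpha(1\otimes e_i)+\iota(1\otimes c_i(\sigma))$ plus density; this is a genuinely nice reorganization that makes the Galois action on the quotient completely explicit and avoids the diagram chase through $D^\wedge$. The one point your route leaves unaddressed is the \emph{canonicity} asserted in the statement: your identification of $Q$ with $C\otimes_{\ca{O}_C}(\ca{O}_{\overline{K}}\otimes_{\ca{O}_K}\Omega_{\ca{O}_K/\bb{Z}_p}^1)^\wedge$ is defined by decreeing $\bar\nu_i\mapsto 1\otimes 1\otimes\df\log u_i$, which a priori depends on the choices of $(u_i)$ and $(w_{im})$. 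This is easily repaired: the map $V_p(\Omega_{\ca{O}_{\overline{K}}/\ca{O}_K}^1)\to C\otimes_{\ca{O}_C}(\ca{O}_{\overline{K}}\otimes_{\ca{O}_K}\Omega_{\ca{O}_K/\bb{Z}_p}^1)^\wedge$ induced by the connecting maps $\Omega_{\ca{O}_{\overline{K}}/\ca{O}_K}^1[p^n]\to D/p^nD$ is canonical, kills $\iota(C(1))$, and sends $(\df\log w_{im})_m$ to $1\otimes 1\otimes\df\log u_i$ (lift and multiply by $p^m$), hence agrees with your $\nu$ by continuity on the dense subspace generated by $\iota(C(1))$ and the $\alpha(1\otimes e_i)$ --- but some such comparison (which is in effect the paper's construction) should be stated to justify the word ``canonical.''
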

\begin{proof}
	We consider the sequence of modules of differentials of $\ca{O}_L/\ca{O}_K/\bb{Z}_p$, where $L/K$ is a finite subextension of $\overline{K}/K$, and pass to $p$-adic completions. Since $\Omega_{\ca{O}_{L}/\ca{O}_K}^1$ is killed by a power of $p$, we still get an exact sequence (\cite{stacks-project} \href{https://stacks.math.columbia.edu/tag/0315}{0315}, \href{https://stacks.math.columbia.edu/tag/0BNG}{0BNG})
	\begin{align}\label{eq:can1}
	\xymatrix{
		\ca{O}_L\otimes_{\ca{O}_K}\widehat{\Omega}_{\ca{O}_K}^1\ar[r]& \widehat{\Omega}_{\ca{O}_L}^1\ar[r]&\Omega_{\ca{O}_{L}/\ca{O}_K}^1\ar[r]&0.
	}
	\end{align}
	By taking colimit over all such $L$, we get an exact sequence 
	\begin{align}\label{eq:Kbar-K-Z-diff}
	\xymatrix{
		\ca{O}_{\overline{K}}\otimes_{\ca{O}_K}\widehat{\Omega}_{\ca{O}_K}^1\ar[r]^-{\alpha}& \omhat{K}{\overline{K}}\ar[r]^-{\beta}&\Omega_{\ca{O}_{\overline{K}}/\ca{O}_K}^1\ar[r]&0.
	}
	\end{align}
	Combining with propositions \ref{prop:hyodo-K/Z-diff}, \ref{prop:hyodo-Kbar/Z} and \ref{prop:falext-M/K}, we get a commutative diagram:
	\begin{align}\label{diam:can}
	\xymatrix{
		0\ar[r]&\ca{O}_{\overline{K}}\otimes_{\ca{O}_K}(\oplus_{i\in I}\ca{O}_K)^\wedge\ar@{^{(}->}[d]\ar[r]&\overline{K}\otimes_{\ca{O}_K}(\oplus_{i\in I}\ca{O}_K)^\wedge\ar@{^{(}->}[d]\ar[r]&\oplus_{i\in I}\overline{K}/\ca{O}_{\overline{K}}\ar@{^{(}->}[d]\ar[r]&0\\
		&\ca{O}_{\overline{K}}\otimes_{\ca{O}_K}\widehat{\Omega}_{\ca{O}_K}^1\ar[r]^-{\alpha}\ar@{->>}[d]& \omhat{K}{\overline{K}}\ar[r]^-{\beta}\ar@{->>}[d]&\Omega_{\ca{O}_{\overline{K}}/\ca{O}_K}^1\ar[r]\ar@{->>}[d]&0\\
		&\ca{O}_{\overline{K}}\otimes_{\ca{O}_K}\Omega_{\ca{O}_K/\ca{O}_{K_0}}^1&\overline{K}/\ak{a}(1)\ar[r]&\overline{K}/\ak{b}(1)\ar[r]&0
	}
	\end{align}
	where the rows and columns are exact, and the middle column splits. We set $D=\ke(\beta)=\im(\alpha)$. We see that $\ca{O}_{\overline{K}}\otimes_{\ca{O}_K}(\oplus_{i\in I}\ca{O}_K)^\wedge\to D$ is injective, whose cokernel is killed by a power of $p$.
	Now for any $n>0$, by applying $\ho_{\bb{Z}_p}(\bb{Z}_p/p^n\bb{Z}_p,-)$ to \eqref{eq:Kbar-K-Z-diff}, we get an exact sequence of $\ca{O}_{\overline{K}}/p^n\ca{O}_{\overline{K}}$-modules
	\begin{align}\label{eq:can2}
	\xymatrix@C=15pt{
		0\ar[r]&D[p^n]\ar[r]& \omhat{K}{\overline{K}}[p^n]\ar[r]&\Omega_{\ca{O}_{\overline{K}}/\ca{O}_K}^1[p^n]\ar[r]&D/p^nD\ar[r]&\omhat{K}{\overline{K}}/p^n\omhat{K}{\overline{K}}=0.
	}
	\end{align}
	We notice that the inverse system $(D[p^n])_n$ is Artin-Rees null, and that $(\omhat{K}{\overline{K}}[p^n])_n$ satisfies Mittag-Leffler condition. Therefore, by taking the inverse limit of \eqref{eq:can2}, we get an exact sequence of $\ca{O}_{C}$-modules
	\begin{align}\label{eq:intcan-falext}
	\xymatrix{
		0\ar[r]& T_p(\omhat{K}{\overline{K}})\ar[r]&T_p(\Omega_{\ca{O}_{\overline{K}}/\ca{O}_K}^1)\ar[r]&D^\wedge\ar[r]&0.
	}
	\end{align}
	By applying $T_p(-)$ to the middle column of \eqref{diam:can}, we get $T_p(\omhat{K}{\overline{K}})=\widehat{\ak{a}}(1)$.
	On the other hand, we notice that $\oplus_{i\in I}\overline{K}/\ca{O}_{\overline{K}}$ is $p$-divisible, and that $((\oplus_{i\in I}\overline{K}/\ca{O}_{\overline{K}})[p^n])_n$ satisfies Mittag-Leffler condition. Therefore, by applying $T_p(-)$ to the right column of (\ref{diam:can}), we get an exact sequence of $\ca{O}_C$-modules
	\begin{align}\label{eq:falext-re}
	\xymatrix@R=1pt{
		0\ar[r]&(\oplus_{i\in I}\ca{O}_C)^\wedge\ar[r]& T_p(\Omega_{\ca{O}_{\overline{K}}/\ca{O}_K}^1)\ar[r]&\widehat{\ak{b}}(1)\ar[r]&0.
	}
	\end{align}
	
	As $\Omega_{\ca{O}_K/\ca{O}_{K_0}}^1$ is killed by a power of $p$, the map $(\ca{O}_{\overline{K}}\otimes_{\ca{O}_K}\widehat{\Omega}_{\ca{O}_K}^1)^\wedge\to D^\wedge$ becomes an isomorphism after inverting $p$. Afterwards, we get from \eqref{eq:intcan-falext} a canonical exact sequence of $C$-modules
	\begin{align}\label{eq:can-falext2}
	\xymatrix{
		0\ar[r]& C(1)\ar[r]&V_p(\Omega_{\ca{O}_{\overline{K}}/\ca{O}_K}^1)\ar[r]&C\otimes_{\ca{O}_C}(\ca{O}_{\overline{K}}\otimes_{\ca{O}_K} \widehat{\Omega}_{\ca{O}_K}^1)^\wedge\ar[r]&0,
	}
	\end{align}
	and from \eqref{eq:falext-re} an exact sequence of $C$-modules
	\begin{align}\label{eq:falext-re2}
	\xymatrix@R=1pt{
		0\ar[r]&C\otimes_{\ca{O}_C}(\oplus_{i\in I}\ca{O}_C)^\wedge\ar[r]& V_p(\Omega_{\ca{O}_{\overline{K}}/\ca{O}_K}^1)\ar[r]&C(1)\ar[r]&0.
	}
	\end{align}
	The latter gives a splitting of \eqref{eq:can-falext2} and an isomorphism $C\otimes_{\ca{O}_C}(\oplus_{i\in I}\ca{O}_C)^\wedge\iso C\otimes_{\ca{O}_C}(\ca{O}_{\overline{K}}\otimes_{\ca{O}_K} \widehat{\Omega}_{\ca{O}_K}^1)^\wedge$ by sending $1\otimes e_i$ to $1\otimes 1\otimes \df\log u_i$ by diagram chasing.
	We notice that the Galois conjugates of $\zeta_n, w_{im}$ are of the form $\zeta_n^a,\zeta_m^bw_{im}$ respectively, which implies that \eqref{eq:can-falext2} is $G_K$-equivariant. Hence, \eqref{eq:can-falext2} gives us the exact sequence \eqref{eq:can-falext} of $C$-$G_K$-modules which splits as a sequence of $C$-modules.
\end{proof}

\begin{mycor}\label{cor:falext-conn}
	The canonical map $K\otimes_{\ca{O}_K}\widehat{\Omega}_{\ca{O}_K}^1\to (C\otimes_{\ca{O}_C}(\ca{O}_{\overline{K}}\otimes_{\ca{O}_K} \Omega_{\ca{O}_K/\bb{Z}_p}^1)^\wedge)^{G_K}$ is an isomorphism, and the connecting map of the sequence \eqref{eq:can-falext}
	\begin{align}\label{eq:falext-conn}
	\delta: K\otimes_{\ca{O}_K}\widehat{\Omega}_{\ca{O}_K}^1\longrightarrow H^1(G_K,C(1))
	\end{align}
	is an isomorphism which coincides with \eqref{eq:hyodo-connmap}. In particular,
	\begin{align}\label{eq:falext-Ginv}
	V_p(\Omega_{\ca{O}_{\overline{K}}/\ca{O}_K}^1)^{G_K}=0.
	\end{align}
\end{mycor}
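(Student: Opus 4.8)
The plan is to deduce the statement from Theorem~\ref{thm:falext}, Lemma~\ref{lem:G-invCI} and Hyodo's computation (Theorem~\ref{thm:hyodo-galcoh}); the essential new input is an explicit cocycle computation of the connecting map by means of the $C$-linear section of $\nu$ constructed in Theorem~\ref{thm:falext}. For the first isomorphism, I would take $G_K$-invariants of the isomorphism of $C$-$G_K$-modules \eqref{eq:4.4.2} and compose it with Lemma~\ref{lem:G-invCI}, which identifies $(C\otimes_{\ca{O}_C}(\oplus_{i\in I}\ca{O}_C)^\wedge)^{G_K}$ with $K\otimes_{\ca{O}_K}(\oplus_{i\in I}\ca{O}_K)^\wedge$, and with the isomorphism \eqref{eq:KhatOmegaK}. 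The resulting isomorphism $K\otimes_{\ca{O}_K}\widehat{\Omega}_{\ca{O}_K}^1\iso(C\otimes_{\ca{O}_C}(\ca{O}_{\overline{K}}\otimes_{\ca{O}_K}\Omega_{\ca{O}_K/\bb{Z}_p}^1)^\wedge)^{G_K}$ sends $\df\log u_i$ to $1\otimes 1\otimes\df\log u_i$, so it agrees with the canonical map on the elements $\df\log u_i$; since both are continuous and $K$-linear and the $K$-span of $\{\df\log u_i\}_{i\in I}$ is dense in $K\otimes_{\ca{O}_K}\widehat{\Omega}_{\ca{O}_K}^1$ by \eqref{eq:KhatOmegaK} and \eqref{eq:sum-i-comp2}, the two coincide, so the canonical map is this isomorphism.

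To compute $\delta$, recall from Theorem~\ref{thm:falext} that $(\df\log w_{im})_m\in V_p(\Omega_{\ca{O}_{\overline{K}}/\ca{O}_K}^1)$ lifts $1\otimes 1\otimes\df\log u_i$ along $\nu$. Fix $\sigma\in G_K$. Since $w_{im}^{p^m}=u_i$ is fixed by $G_K$, the ratios $\sigma(w_{im})/w_{im}$ lie in $\mu_{p^m}$ and, as $w_{i,m+1}^p=w_{i,m}$, form a compatible system $\eta_i(\sigma)=(\sigma(w_{im})/w_{im})_m\in\bb{Z}_p(1)$; hence
\begin{align*}
\sigma\cdot(\df\log w_{im})_m-(\df\log w_{im})_m=\big(\df\log(\sigma(w_{im})/w_{im})\big)_m=\iota(1\otimes\eta_i(\sigma)),
\end{align*}
so $\delta(1\otimes 1\otimes\df\log u_i)$ is the class of the $1$-cocycle $\sigma\mapsto 1\otimes\eta_i(\sigma)$. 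The identical computation performed inside \eqref{eq:hyodo-fals} shows that this is exactly the class $[f_i]$ of Remark~\ref{rem:hyodo-H1}; thus $\delta$ and \eqref{eq:hyodo-connmap} agree on the $\df\log u_i$, hence, by the same continuity-plus-density argument as above, everywhere, and $\delta$ is an isomorphism by Theorem~\ref{thm:hyodo-galcoh}(i). (Alternatively, applying $V_p$ to the map $\beta$ of \eqref{eq:Kbar-K-Z-diff} should yield a morphism of short exact sequences from \eqref{eq:hyodo-fals} to \eqref{eq:can-falext} inducing the identity on $C(1)$, and then the compatibility of $\delta$ with \eqref{eq:hyodo-connmap} is just functoriality of connecting maps.)

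For \eqref{eq:falext-Ginv}, apply $(-)^{G_K}$ to \eqref{eq:can-falext}: writing $M$ for its third term, the resulting left-exact sequence $0\to H^0(G_K,C(1))\to V_p(\Omega_{\ca{O}_{\overline{K}}/\ca{O}_K}^1)^{G_K}\to M^{G_K}\xrightarrow{\delta}H^1(G_K,C(1))$ has $H^0(G_K,C(1))=0$ by Theorem~\ref{thm:hyodo-galcoh}(iii) and $\delta$ injective by the previous paragraph, forcing $V_p(\Omega_{\ca{O}_{\overline{K}}/\ca{O}_K}^1)^{G_K}=0$.

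I expect the main obstacle to be the comparison of the two connecting maps: verifying carefully that, through the identifications of $C(1)$ built into \eqref{eq:can-falext} and \eqref{eq:hyodo-fals} and the maps of diagram \eqref{diam:can}, the cocycle $\sigma\mapsto 1\otimes\eta_i(\sigma)$ arising from the Faltings sequence is indeed the cocycle $f_i$ of Remark~\ref{rem:hyodo-H1}, together with the routine but non-vacuous step of upgrading agreement on the dense subspace spanned by the $\df\log u_i$ to agreement everywhere --- or, on the functorial route, checking that the claimed morphism of exact sequences is well defined and induces the identity on $C(1)$.
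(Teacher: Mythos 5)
Your proposal is correct and follows essentially the same route as the paper: the first isomorphism is obtained by combining \eqref{eq:KhatOmegaK}, \eqref{eq:4.4.2} and Lemma \ref{lem:G-invCI}, the identification of $\delta$ with \eqref{eq:hyodo-connmap} comes down to the cocycle $\sigma\mapsto\sigma\bigl((\df\log w_{im})_m\bigr)-(\df\log w_{im})_m$ being exactly the cocycle $f_i$ of Remark \ref{rem:hyodo-H1}, and \eqref{eq:falext-Ginv} follows from $C(1)^{G_K}=0$ together with the injectivity of $\delta$. You merely make explicit the cocycle comparison and the continuity-plus-density step that the paper leaves implicit in its citation of Remark \ref{rem:hyodo-H1}.
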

\begin{proof}
	By \eqref{eq:KhatOmegaK}, \eqref{eq:4.4.2} and \ref{lem:G-invCI}, we see that the canonical map $K\otimes_{\ca{O}_K}\widehat{\Omega}_{\ca{O}_K}^1\to (C\otimes_{\ca{O}_C}(\ca{O}_{\overline{K}}\otimes_{\ca{O}_K} \widehat{\Omega}_{\ca{O}_K}^1)^\wedge)^{G_K}$ is an isomorphism.
	Now \eqref{eq:falext-conn} follows from \ref{thm:hyodo-galcoh} (\luoma{1}) and \ref{rem:hyodo-H1}. And \eqref{eq:falext-Ginv} follows from the fact that $C(1)^{G_K}=0$.
\end{proof}

\begin{mydefn}\label{defn:fal-ext}
	We call the sequence \eqref{eq:can-falext} the \emph{Faltings extension of $\ca{O}_K$ over $\bb{Z}_p$}.
\end{mydefn}

\section{Fontaine's Injection}\label{sec:foninj}
\subsection{}
For any proper model $\ak{X}$ of the abelian variety $X$ over $\ca{O}_K$ (i.e. a proper $\ca{O}_K$-scheme whose generic fiber is $X$), we identify $\ak{X}(\ca{O}_{\overline{K}})$ with $X(\overline{K})$ by valuative criterion. Pullback of K\"ahler differentials defines a map
\begin{align}\label{eq:pullback-diff}
H^0(\ak{X},\Omega_{\ak{X}/\ca{O}_K}^1)\longrightarrow \op{Map}_{G_K}(X(\overline{K}),\Omega_{\ca{O}_{\overline{K}}/\ca{O}_K}^1),\ \omega\mapsto (u\mapsto u^*\omega).
\end{align}
We notice that $H^0(X,\Omega_{X/K}^1)=K\otimes_{\ca{O}_K}H^0(\ak{X},\Omega_{\ak{X}/\ca{O}_K}^1)$, and that any differential form over $X$ is invariant under translations. Hence, we can take an integer $r>0$ big enough, such that for any $\omega\in p^r H^0(\ak{X},\Omega_{\ak{X}/\ca{O}_K}^1)$ and $u_1,u_2\in \ak{X}(\ca{O}_{\overline{K}})$, $(u_1+u_2)^*\omega=u_1^* \omega+u_2^*\omega$ (cf. \cite{fontaine1982formes} Proposition 3). Therefore, \eqref{eq:pullback-diff} induces a homomorphism of $\ca{O}_K$-modules
\begin{align}\label{eq:pullback-invdiff}
\rho_1:p^rH^0(\ak{X},\Omega_{\ak{X}/\ca{O}_K}^1)\longrightarrow \ho_{\bb{Z}[G_K]}(X(\overline{K}),\Omega_{\ca{O}_{\overline{K}}/\ca{O}_K}^1),\ \omega\mapsto (u\mapsto u^*\omega).
\end{align}
We may also assume that $p^rH^0(\ak{X},\Omega_{\ak{X}/\ca{O}_K}^1)$ has no $p$-torsion for further use.

\subsection{}
The functor $V_p(-)$ gives us an injective homomorphism
\begin{align}\label{eq:rho2}
\rho_2:\ho_{\bb{Z}[G_K]}(X(\overline{K}),\Omega_{\ca{O}_{\overline{K}}/\ca{O}_K}^1)\longrightarrow \ho_{\bb{Z}[G_K]}(V_p(X),V_p(\Omega_{\ca{O}_{\overline{K}}/\ca{O}_K}^1))
\end{align}
since $X(\overline{K})$ is $p$-divisible (cf. \cite{fontaine1982formes} 3.5 Lemme 1).

\subsection{}
The composition $\rho_2\circ \rho_1$ induces a homomorphism of $K$-modules
\begin{align}\label{eq:rho2-rho1}
H^0(X,\Omega_{X/K}^1)=K\otimes_{\ca{O}_K}p^rH^0(\ak{X},\Omega_{\ak{X}/\ca{O}_K}^1)\longrightarrow \ho_{\bb{Z}[G_K]}(V_p(X),V_p(\Omega_{\ca{O}_{\overline{K}}/\ca{O}_K}^1)).
\end{align}
As the category of $\ca{O}_K$-proper models of $X$ is connected, this composition does not depend on the choice of the model and number $r$ (cf. \cite{fontaine1982formes} Proposition 4). 
We conclude by the following lemma that \eqref{eq:rho2-rho1} is injective.
\begin{mylem}[\cite{fontaine1982formes} 3.5 Lemme 1]\label{lem:rho1inj}
	There is a proper model $\ak{X}$ of $X$ such that $\rho_1$ is injective.
\end{mylem}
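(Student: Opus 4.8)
The plan is to construct an explicit proper model $\ak{X}$ of $X$ over $\ca{O}_K$ for which the pullback map $\rho_1$ of \eqref{eq:pullback-invdiff} can be shown to be injective on a suitable submodule of global differentials, following the strategy of Fontaine. Since $X$ is an abelian variety, after possibly enlarging the base we may use the N\'eron model, or more concretely take $\ak{X}$ to be the closure of $X$ in some projective space via a very ample line bundle; the key point is that $\ak{X}$ should be chosen so that $H^0(\ak{X},\Omega_{\ak{X}/\ca{O}_K}^1)$ is a finite free $\ca{O}_K$-module whose generic fiber recovers $H^0(X,\Omega_{X/K}^1)$, and so that enough $\ca{O}_{\overline{K}}$-points of $\ak{X}$ are available to detect differentials. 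The invariance of differentials on $X$ under translation (already invoked in the construction of $\rho_1$) will again play a central role.

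First I would reduce the injectivity of $\rho_1$ to a pointwise statement: a nonzero element $\omega\in p^rH^0(\ak{X},\Omega_{\ak{X}/\ca{O}_K}^1)$ (which we have arranged to be $p$-torsion-free) gives the zero homomorphism $u\mapsto u^*\omega$ only if $u^*\omega=0$ in $\Omega_{\ca{O}_{\overline{K}}/\ca{O}_K}^1$ for every $u\in \ak{X}(\ca{O}_{\overline{K}})=X(\overline{K})$. So it suffices to produce, for any such nonzero $\omega$, a single point $u\in X(\overline{K})$ with $u^*\omega\neq 0$. Working locally near a point $x$ of $\ak{X}$ in the special fiber where $\omega$ does not vanish, I would choose local coordinates and lift a well-chosen tangent direction to an $\ca{O}_{\overline{K}}$-point; concretely, near the identity section one can use the formal group / the translation-invariant framing of $\Omega^1_{X/K}$ to write $\omega$ in terms of an invariant basis and then pair it against a point $u$ whose coordinates involve the elements $w_{im}$ (or a uniformizer of a ramified extension), exploiting \ref{lem:M/K-diff} and \ref{prop:falext-M/K} which compute $\Omega_{\ca{O}_{\overline{K}}/\ca{O}_K}^1$ explicitly. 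The explicit description of $\Omega_{\ca{O}_{\overline{K}}/\ca{O}_K}^1$ as containing the $p$-divisible pieces $\oplus_{i\in I}\overline{K}/\ca{O}_{\overline{K}}$ and $\overline{K}/\ak{b}(1)$ is exactly what makes it possible to see that a generic ramified point does not kill $\omega$.

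The main obstacle I anticipate is controlling the interaction between the integral structure (the factor $p^r$, the $p$-torsion, the different $\ak{D}_{M/M_1}$ appearing in $\ak{b}$) and the requirement that the test point $u$ be an honest $\ca{O}_{\overline{K}}$-point of the chosen model rather than just an $\overline{K}$-point of $X$: one must ensure the local coordinate computation does not lose the nonvanishing of $u^*\omega$ after the necessary reductions modulo powers of $p$. This is the delicate part of Fontaine's original argument and the reason a specific model $\ak{X}$ — rather than an arbitrary one — is needed; by the connectedness of the category of proper models (already used above), injectivity for one good model suffices, so I would be free to make $\ak{X}$ as convenient as possible, e.g. smooth in a neighborhood of the relevant points, which trivializes the local differential computation. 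I expect the remaining verifications to be the routine local coordinate manipulations that Fontaine carries out in \cite{fontaine1982formes} 3.5.
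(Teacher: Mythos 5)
Your overall strategy coincides with the paper's (and Fontaine's): take for $\ak{X}$ the scheme-theoretic image of $X$ under a projective embedding $X\to\bb{P}^n_{K}\to\bb{P}^n_{\ca{O}_K}$, arranged so that the local ring at the reduction $\overline{u}$ of the origin is regular with completion $\ca{O}_K\llbracket T_1,\dots,T_d\rrbracket$; use translation-invariance of forms on $X$ to inject $p^rH^0(\ak{X},\Omega_{\ak{X}/\ca{O}_K}^1)$ into the formal differentials at $\overline{u}$; and detect a nonzero form by evaluating at a well-chosen ramified point. (Your N\'eron model suggestion is a false start, since the N\'eron model is not proper unless $X$ has good reduction, but your ``closure in projective space'' alternative is what the paper actually does.)

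The gap is at the decisive step, which you assert rather than prove, and your claim that choosing $\ak{X}$ smooth near the relevant point ``trivializes the local differential computation'' is not correct. Even with coordinates $T_1,\dots,T_d$ in hand, one must show that for a nonzero $\sum_i\alpha_i\,\df T_i$ with $\alpha_i\in\ca{O}_K\llbracket T_1,\dots,T_d\rrbracket$ there exist $x_1,\dots,x_d\in\ak{m}_{\overline{K}}$ with $\sum_i\alpha_i(x_1,\dots,x_d)\,\df x_i\neq 0$ in $\Omega_{\ca{O}_{\overline{K}}/\ca{O}_K}^1$. This is delicate precisely because $\Omega_{\ca{O}_{\overline{K}}/\ca{O}_K}^1$ is a torsion module in which $\df x$ has a large annihilator; the assertion that ``a generic ramified point does not kill $\omega$'' is the content of the lemma, not a consequence of the structural descriptions in \ref{lem:M/K-diff} and \ref{prop:falext-M/K}. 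The paper's argument first reduces to $d=1$ by substituting formal series $\beta_i(T)$ without constant term (possible because $\ca{O}_K$ is an infinite domain), then takes $x=\varpi^{1/p^N}$ and computes that the annihilator of $\df x$ in $\Omega_{\ca{O}_{\overline{K}}/\ca{O}_M}^1$, with $M=\bigcup_{i\in I,m\geq 0}K(w_{im})$, is generated by $p^Nx^{p^N-1}$ --- this uses \ref{lem:dvr-diffinj} over the henselian discrete valuation ring $\ca{O}_M$ with perfect residue field, which is exactly the adaptation required by the imperfectness of $k$ --- and finally chooses $N$ large enough that the dominant term $a_{k_0}x^{k_0}$ of $\alpha(x)$ has valuation small enough to survive multiplication by $\df x$. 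None of this quantitative bookkeeping appears in your proposal, so as written it does not yet constitute a proof.
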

\begin{proof}
	We follow closely the proof of (\cite{fontaine1982formes} 3.5 Lemme 1), which does not essentially use the assumption that the residue field $k$ is perfect. We briefly sketch how to adapt Fontaine's proof.
	
	(1) Let $u$ be the origin of $X$ and $d$ the dimension of $X$. We first take a closed immersion $X\to \bb{P}^n_K$, and then we take an open immersion $\bb{P}^n_K\to\bb{P}^n_{\ca{O}_K}$ described later (all the morphisms are over $\ca{O}_K$). Let $\ak{X}$ be the scheme theoretic image of the composition $X\to \bb{P}^n_{\ca{O}_K}$, which is thus a proper model of $X$. Let $\overline{u}$ be the special point of the scheme theoretic image of $u$. It is a $k$-point. After a linear transformation of coordinates, we can at first choose an open immersion $\bb{P}^n_K\to \bb{P}^n_{\ca{O}_K}$ such that $\ca{O}_{\ak{X},\overline{u}}$ is a $(d+1)$-dimensional regular local ring (cf. \cite{fontaine1982formes} 3.6 Lemme 3).
	
	(2) The $\ak{m}_{\ak{X},\overline{u}}$-adic completion of the local ring $\ca{O}_{\ak{X},\overline{u}}$ is isomorphic to $\ca{O}_K\llbracket T_1,\dots,T_d\rrbracket$, denoted by $\widehat{\ca{O}}_{\ak{X},\overline{u}}$. The $\ak{m}_{\ak{X},\overline{u}}$-adic completion of $\Omega_{\ca{O}_{\ak{X},\overline{u}}/\ca{O}_K}^{1}$ is a free $\widehat{\ca{O}}_{\ak{X},\overline{u}}$-module of rank $d$, denoted by $\widehat{\Omega}_{\ca{O}_{\ak{X},\overline{u}}/\ca{O}_K}^{1}$. The invariance of differential forms over $X$ and the fact that $p^rH^0(\ak{X},\Omega_{\ak{X}/\ca{O}_K}^1)\subseteq H^0(X,\Omega_{X/K}^1)$ imply that the canonical map $p^rH^0(\ak{X},\Omega_{\ak{X}/\ca{O}_K}^1)\to {\Omega}_{\ca{O}_{\ak{X},\overline{u}}/\ca{O}_K}^{1}$ is injective (cf. \cite{fontaine1982formes} 3.7). We remark that the canonical map ${\Omega}_{\ca{O}_{\ak{X},\overline{u}}/\ca{O}_K}^{1}\to \widehat{\Omega}_{\ca{O}_{\ak{X},\overline{u}}/\ca{O}_K}^{1}$ is injective as ${\Omega}_{\ca{O}_{\ak{X},\overline{u}}/\ca{O}_K}^{1}$ is of finite type over the Noetherian local ring $\ca{O}_{\ak{X},\overline{u}}$.
	
	(3) We have the following commutative diagram
	\begin{align}\label{diam:local-diff}
	\xymatrix{
		p^rH^0(\ak{X},\Omega_{\ak{X}/\ca{O}_K}^1)\ar@{^{(}->}[d]\ar[r]^-{\rho_1}&\ho_{\bb{Z}[G_K]}(X(\overline{K}),\Omega_{\ca{O}_{\overline{K}}/\ca{O}_K}^1)\ar[d]\\
		\widehat{\Omega}_{\ca{O}_{\ak{X},\overline{u}}/\ca{O}_K}^{1}\ar[r]^-{\rho_1'}&\op{Map}(\ho_{\ca{O}_K\text{-}\op{cont}}(\widehat{\ca{O}}_{\ak{X},\overline{u}},\ca{O}_{\overline{K}}),\Omega_{\ca{O}_{\overline{K}}/\ca{O}_K}^1)
	}
	\end{align}
	where we identify the set of continuous $\ca{O}_K$-algebra homomorphisms from $\widehat{\ca{O}}_{\ak{X},\overline{u}}$ to $\ca{O}_{\overline{K}}$ with a subset of $\ak{X}(\ca{O}_{\overline{K}})=X(\overline{K})$. To show the injectivity of $\rho_1$ it suffices to show that of $\rho_1'$. More precisely, we need to show that for any nonzero formal differential form $\sum_{i=1}^d\alpha_i(T_1,\dots,T_d)\df T_i$ where $\alpha_i\in \ca{O}_K\llbracket T_1,\dots,T_d\rrbracket$, there are $x_1,\dots,x_d\in \ak{m}_{\overline{K}}$ such that $\sum_{i=1}^d\alpha_i(x_1,\dots,x_d)\df x_i$ is not zero in $\Omega_{\ca{O}_{\overline{K}}/\ca{O}_K}^1$.
	
	(4) For $d=1$, suppose $\alpha(T)=\sum_{k\geq 0}a_kT^k$ where $a_k\in \ca{O}_K$ not all zero. Let $k_0$ be the minimal number such that $v_p(a_{k_0})$ is minimal. For a sufficiently large integer $N$, we take $x=\varpi^{1/p^N}\in \ak{m}_{\overline{K}}$, where $\varpi$ is a uniformizer of $\ca{O}_K$, such that $v_p(a_{k_0}x^{k_0})<v_p(a_kx^k)$ for any $k\neq k_0$. Let $M=\bigcup_{i\in I,m\geq 0}K(w_{im})\subseteq \overline{K}$. The annihilator of $\df x$ in $\Omega_{\ca{O}_{M(x)}/\ca{O}_M}^1$ is generated by $p^Nx^{p^N-1}$. As $\ca{O}_M$ is a henselian discrete valuation ring with perfect residue field, lemma \ref{lem:dvr-diffinj} implies that the annihilator of $\df x$ in $\Omega_{\ca{O}_{\overline{K}}/\ca{O}_M}^1$ is also generated by $p^Nx^{p^N-1}$. When $N$ is big enough, $\alpha(x)\df x$ is not zero in $\Omega_{\ca{O}_{\overline{K}}/\ca{O}_K}^1$ (cf. \cite{fontaine1982formes} 3.7 Lemme 4).
	
	(5) As $\ca{O}_K$ is an infinite domain, there are formal series $\beta_1,\dots,\beta_d\in \ca{O}_K\llbracket T\rrbracket$ without constant term, such that $\sum_{i=1}^d \alpha_i(\beta_1,\dots,\beta_d)\cdot \beta_i'\in \ca{O}_K\llbracket T\rrbracket$ is still nonzero. Hence, the general case reduces to the case $d=1$ (cf. \cite{fontaine1982formes} 3.7 Lemme 5).
\end{proof}

\subsection{}\label{sec:rho3}
As $X(\overline{K})$ is $p$-divisible, we have a canonical exact sequence
\begin{align}
\xymatrix{
0\ar[r]&T_p(X)\ar[r]&V_p(X)\ar[r]&X(\overline{K})\ar[r]&0.
}
\end{align}
After applying the functor $\ho_{\bb{Z}[G_K]}(-,V_p(\Omega_{\ca{O}_{\overline{K}}/\ca{O}_K}^1))$, we get an exact sequence
\begin{align}
0\to\ho_{\bb{Z}[G_K]}(X(\overline{K}),V_p(\Omega_{\ca{O}_{\overline{K}}/\ca{O}_K}^1))\to\ho_{\bb{Z}[G_K]}(V_p(X),V_p(\Omega_{\ca{O}_{\overline{K}}/\ca{O}_K}^1))\to\ho_{\bb{Z}[G_K]}(T_p(X),V_p(\Omega_{\ca{O}_{\overline{K}}/\ca{O}_K}^1)).
\end{align}
Let $f:X(\overline{K})\to V_p(\Omega_{\ca{O}_{\overline{K}}/\ca{O}_K}^1)$ be a $G_K$-equivariant homomorphism.  For any finite extension $L/K$, we denote by $G_L=\op{Gal}(\overline{K}/L)$ the absolute Galois group of $L$. Then $f$ maps $X(L)$ to $V_p(\Omega_{\ca{O}_{\overline{K}}/\ca{O}_K}^1)^{G_L}$. We notice that the kernel of the surjection $\Omega_{\ca{O}_{\overline{K}}/\ca{O}_K}^1\to \Omega_{\ca{O}_{\overline{K}}/\ca{O}_L}^1$ is killed by a power of $p$, which indicates that the map $V_p(\Omega_{\ca{O}_{\overline{K}}/\ca{O}_K}^1)\to V_p(\Omega_{\ca{O}_{\overline{K}}/\ca{O}_L}^1)$ is an isomorphism. Now, by applying \eqref{eq:falext-Ginv} to $L$, we get
\begin{align}\label{eq:rho3tech}
V_p(\Omega_{\ca{O}_{\overline{K}}/\ca{O}_K}^1)^{G_L}=V_p(\Omega_{\ca{O}_{\overline{K}}/\ca{O}_L}^1)^{G_L}=0.
\end{align}
Hence $f(X(\overline{K}))=\bigcup_{L/K}f(X(L))=0$, which indicates that we have an injective map (cf. \cite{fontaine1982formes} 3.5 Lemme 2)
\begin{align}\label{eq:rho3}
\rho_3:\ho_{\bb{Z}[G_K]}(V_p(X),V_p(\Omega_{\ca{O}_{\overline{K}}/\ca{O}_K}^1))\longrightarrow\ho_{\bb{Z}[G_K]}(T_p(X),V_p(\Omega_{\ca{O}_{\overline{K}}/\ca{O}_K}^1)).
\end{align}
Remark that any element in the image of $\rho_3\circ \rho_2\circ \rho_1$ is $\bb{Z}_p$-linear.
All in all, we have generalized Fontaine's injection (\cite{fontaine1982formes} Th\'{e}or\`{e}me 2$'$) to the imperfect residue field case.
\begin{mythm}\label{thm:fon-inj}
	There is a canonical $K$-linear injective homomorphism
	\begin{align}\label{eq:foninj}
	\rho:H^0(X,\Omega_{X/K}^1)\longrightarrow \ho_{\bb{Z}_p[G_K]}(T_p(X),V_p(\Omega_{\ca{O}_{\overline{K}}/\ca{O}_K}^1)).
	\end{align}
\end{mythm}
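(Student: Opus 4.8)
The strategy is to obtain $\rho$ by composing the three homomorphisms $\rho_1$, $\rho_2$, $\rho_3$ built in \eqref{eq:pullback-invdiff}, \eqref{eq:rho2}, \eqref{eq:rho3} and then inverting $p$. First I would fix a proper model $\ak{X}$ of $X$ over $\ca{O}_K$ for which Lemma \ref{lem:rho1inj} holds, together with an integer $r>0$ large enough that the pullback-of-differentials map \eqref{eq:pullback-invdiff} is defined (additivity of $u\mapsto u^*\omega$ on $p^r H^0(\ak{X},\Omega^1_{\ak{X}/\ca{O}_K})$) and that $p^r H^0(\ak{X},\Omega^1_{\ak{X}/\ca{O}_K})$ has no $p$-torsion. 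For this choice $\rho_1$ is an injective, $\ca{O}_K$-linear, $G_K$-compatible map into $\ho_{\bb{Z}[G_K]}(X(\overline{K}),\Omega^1_{\ca{O}_{\overline{K}}/\ca{O}_K})$. Post-composing with $\rho_2$, injective because $X(\overline{K})$ is $p$-divisible, and then with $\rho_3$, whose injectivity is precisely where the Faltings extension enters through the vanishing $V_p(\Omega^1_{\ca{O}_{\overline{K}}/\ca{O}_K})^{G_L}=0$ for every finite extension $L/K$ (that is, \eqref{eq:falext-Ginv} in Corollary \ref{cor:falext-conn}), yields an injective $\ca{O}_K$-linear map $\rho_3\circ\rho_2\circ\rho_1\colon p^r H^0(\ak{X},\Omega^1_{\ak{X}/\ca{O}_K})\to \ho_{\bb{Z}[G_K]}(T_p(X),V_p(\Omega^1_{\ca{O}_{\overline{K}}/\ca{O}_K}))$.

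Next I would invert $p$. Since $\Omega^1_{\ca{O}_{\overline{K}}/\ca{O}_K}$ is $p$-primary torsion (Proposition \ref{prop:falext-M/K}), we have $V_p(\Omega^1_{\ca{O}_{\overline{K}}/\ca{O}_K})=T_p(\Omega^1_{\ca{O}_{\overline{K}}/\ca{O}_K})\otimes_{\bb{Z}_p}\bb{Q}_p$ (\S\ref{ssec:2.3}), and as $T_p(\Omega^1_{\ca{O}_{\overline{K}}/\ca{O}_K})$ is an $\ca{O}_C$-module this is a $C$-vector space; hence $\ho_{\bb{Z}_p[G_K]}(T_p(X),V_p(\Omega^1_{\ca{O}_{\overline{K}}/\ca{O}_K}))$ is a $\bb{Q}_p$-subspace of $\ho_{\bb{Z}[G_K]}(T_p(X),V_p(\Omega^1_{\ca{O}_{\overline{K}}/\ca{O}_K}))$ which carries a natural $K$-action through $K\subseteq C^{G_K}$. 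Because $H^0(X,\Omega^1_{X/K})=K\otimes_{\ca{O}_K}p^r H^0(\ak{X},\Omega^1_{\ak{X}/\ca{O}_K})$, the $\ca{O}_K$-linear map $\rho_3\circ\rho_2\circ\rho_1$ extends uniquely to a $K$-linear map $\rho$ on $H^0(X,\Omega^1_{X/K})$, and this is exactly the map $\rho_3$ applied to \eqref{eq:rho2-rho1}. As observed after \eqref{eq:rho3}, every element in the image of $\rho_3\circ\rho_2\circ\rho_1$ is $\bb{Z}_p$-linear, hence lies in $\ho_{\bb{Z}_p[G_K]}(T_p(X),V_p(\Omega^1_{\ca{O}_{\overline{K}}/\ca{O}_K}))$; since the image of $\rho$ is the $K$-span of these elements inside that $K$-stable $\bb{Q}_p$-subspace, the image of $\rho$ also lies in $\ho_{\bb{Z}_p[G_K]}(T_p(X),V_p(\Omega^1_{\ca{O}_{\overline{K}}/\ca{O}_K}))$, as required by the statement.

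Finally, injectivity and canonicity. The map $\rho_3\circ\rho_2\circ\rho_1$ is an injective homomorphism from the finitely generated torsion-free $\ca{O}_K$-module $p^r H^0(\ak{X},\Omega^1_{\ak{X}/\ca{O}_K})$ into a $\bb{Q}_p$-vector space; since $\ca{O}_K$ is a discrete valuation ring with $\ca{O}_K[1/p]=K$, the flat base change $K\otimes_{\ca{O}_K}-$ preserves kernels, so $\rho$ remains injective. Independence of $\rho$ from $\ak{X}$ and from $r$ follows from the connectedness of the category of proper $\ca{O}_K$-models of $X$, exactly as recalled in the discussion around \eqref{eq:rho2-rho1} (compare \cite{fontaine1982formes} Proposition 4), so $\rho$ is canonical. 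The substantive input has in fact already been provided: the injectivity of $\rho_1$ rests on the careful construction of a model in Lemma \ref{lem:rho1inj}, and that of $\rho_3$ on the computation of the Faltings extension together with Hyodo's Galois cohomology; the only mildly delicate point left for the present theorem is checking that inverting $p$ preserves injectivity and keeps the image inside $\ho_{\bb{Z}_p[G_K]}$, which is what I carry out above.
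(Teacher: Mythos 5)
Your proposal is correct and follows essentially the same route as the paper: compose $\rho_1$ (pullback of differentials on a proper model, injective by Lemma \ref{lem:rho1inj}), $\rho_2$ (injective since $X(\overline{K})$ is $p$-divisible), and $\rho_3$ (injective via the vanishing \eqref{eq:falext-Ginv}), then invert $p$ and use the connectedness of the category of proper models for canonicity. The extra care you take with $\bb{Z}_p$-linearity of the image and with injectivity being preserved under $K\otimes_{\ca{O}_K}-$ matches the remarks the paper makes in \S\ref{sec:rho3} and around \eqref{eq:rho2-rho1}.
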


\section{Weak Hodge-Tate Representations}\label{sec:wHT}
\begin{mydefn}\label{defn:weakHTrepn}
	For any $C$-$G_K$-module $V$ of finite dimension, let
	\begin{align}
	0=V_0\subsetneq V_1\subsetneq V_2\subsetneq \cdots \subsetneq V_n=V
	\end{align}
	be a composition series of $V$, i.e. $V_{i+1}/V_i$ is an irreducible $C$-$G_K$-module for any $i$. The set of factors $\{V_{i+1}/V_i\}_{0\leq i<n}$ does not depend on the choice of the composition series by Schreier refinement theorem. We call the multiset
	\begin{align}\label{eq:HTweight}
	\op{wt}(V)=\{r_i\ |\ V_{i+1}/V_i\cong C(r_i),\ 0\leq i<n\}
	\end{align}
	the multiset of \emph{weak Hodge-Tate weights} of $V$. If all the factors are Tate twists of $C$, i.e. $\dim_C V$ equals the cardinality of $\op{wt}(V)$, then we call $V$ a \emph{weak Hodge-Tate $C$-representation of $G_K$}. We denote by $\scr{C}$ the full subcategory of finite-dimensional $C$-$G_K$-modules formed by weak Hodge-Tate representations. 
\end{mydefn}
\begin{myprop}\label{prop:HTweight}
	Let $V$ be a finite-dimensional $C$-$G_K$-module.
	\begin{enumerate}[\rm (i)]
		\item For any short exact sequence of finite-dimensional $C$-$G_K$-modules $0\to V'\to V\to V''\to 0$, we have $\op{wt}(V)=\op{wt}(V')\sqcup \op{wt}(V'')$. In particular, $\scr{C}$ is closed under taking subrepresentation, quotient and extension.
		\item For the dual representation $V^*=\ho_C(V,C)$, we have $\op{wt}(V^*)=-\op{wt}(V)$.
	\end{enumerate}
\end{myprop}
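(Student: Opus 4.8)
The plan is to deduce both statements from the Jordan--Hölder formalism already invoked in Definition~\ref{defn:weakHTrepn}. The category of finite-dimensional $C$-$G_K$-modules is a \emph{length category}: any proper $G_K$-stable $C$-subspace of $V$ has strictly smaller $C$-dimension, so every object has a composition series, and the multiset of its factors (in particular $\op{wt}(V)$, which records only the factors of the shape $C(r)$, with multiplicity) is well defined.

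For part (i), I would first build a composition series of $V$ compatible with $0\to V'\inj V\surj V''\to 0$. Pick composition series $0=V'_0\subsetneq\cdots\subsetneq V'_a=V'$ and $0=V''_0\subsetneq\cdots\subsetneq V''_b=V''$, and let $W_j\subseteq V$ be the preimage of $V''_j$ under $V\surj V''$, so $W_0=V'$ and $W_b=V$. Then $0=V'_0\subsetneq\cdots\subsetneq V'_a=W_0\subsetneq W_1\subsetneq\cdots\subsetneq W_b=V$ is a composition series of $V$: its successive quotients are the $V'_{i+1}/V'_i$ together with the $W_{j+1}/W_j\cong V''_{j+1}/V''_j$, all irreducible. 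Hence the multiset of all Jordan--Hölder factors of $V$ is the disjoint union of those of $V'$ and of $V''$, and passing to the sub-multiset of factors of the form $C(r)$ gives $\op{wt}(V)=\op{wt}(V')\sqcup\op{wt}(V'')$. For the closure statement, observe that for any finite-dimensional $C$-$G_K$-module $W$ the number of factors of the form $C(r)$ is at most the length of $W$, which is at most $\dim_C W$, so $\#\op{wt}(W)\le\dim_C W$ with equality exactly when $W\in\scr{C}$; combining this inequality for $V'$ and $V''$ with the additivity of $\#\op{wt}$ and of $\dim_C$ along the sequence forces $V\in\scr{C}$ if and only if $V',V''\in\scr{C}$.

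For part (ii), the engine is that $\ho_C(-,C)$ is an exact contravariant self-equivalence of finite-dimensional $C$-$G_K$-modules with $V^{**}\cong V$ canonically; in particular it carries irreducible modules to irreducible modules. Dualizing a composition series $0=V_0\subsetneq\cdots\subsetneq V_n=V$ produces the composition series $0=(V/V_n)^*\subsetneq(V/V_{n-1})^*\subsetneq\cdots\subsetneq(V/V_0)^*=V^*$, whose graded pieces are the duals $(V_{i+1}/V_i)^*$ of the graded pieces of the original. It then suffices to identify $C(r)^*=\ho_C(C(r),C)$ with $C(-r)$ as a $C$-$G_K$-module, which follows from $\bb{Z}_p(1)$ being free of rank one over $\bb{Z}_p$, so that $\ho_{\bb{Z}_p}(\bb{Z}_p(r),\bb{Z}_p)=\bb{Z}_p(-r)$ $G_K$-equivariantly. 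Since a factor $(V_{i+1}/V_i)^*$ is a Tate twist $C(s)$ precisely when $V_{i+1}/V_i\cong C(-s)$, this sets up a weight-negating bijection between the Tate-twist factors of $V$ and those of $V^*$, giving $\op{wt}(V^*)=-\op{wt}(V)$.

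I do not expect a genuine obstacle: the argument is entirely length-theoretic bookkeeping. The only subtleties worth flagging are that a one-dimensional irreducible $C$-$G_K$-module need not be a Tate twist, so $\#\op{wt}(W)$ may be strictly smaller than the length of $W$ (which is exactly why membership in $\scr{C}$ must be detected by $\#\op{wt}(W)=\dim_C W$ rather than by the length), and that the identification $C(r)^*\cong C(-r)$ must be checked to be $G_K$-equivariant, not merely $C$-linear.
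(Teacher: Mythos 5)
Your proof is correct and follows essentially the same route as the paper, which disposes of (i) by citing ``basic properties of composition series'' and of (ii) by the fact $C(r)^*=C(-r)$; you have simply filled in the standard Jordan--H\"older bookkeeping that the paper leaves implicit. The subtleties you flag (one-dimensional irreducibles need not be Tate twists, and $C(r)^*\cong C(-r)$ must be $G_K$-equivariant) are genuine but routine, and your handling of them is fine.
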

\begin{proof}
	The first assertion follows from the basic properties of composition series. The second assertion follows from the basic fact $C(r)^*=C(-r)$.
\end{proof}
\begin{myprop}\label{prop:C^s}
	For $s\in \bb{N}$ and $r\in \bb{Z}$, the subrepresentations and quotients of $C(r)^{\oplus s}$ in $\scr{C}$ are direct summands of $C(r)^{\oplus s}$ of the form $C(r)^{\oplus t}$ for some $t\in \bb{N}$.
\end{myprop}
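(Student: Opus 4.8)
The plan is to reduce the statement to a cohomological computation using the vanishing results of Hyodo. Let $W \subseteq C(r)^{\oplus s}$ be a subrepresentation in $\scr{C}$; by \ref{prop:HTweight} (i) all weak Hodge-Tate weights of $W$ equal $r$, so $W$ has dimension $t := \dim_C W$ and $\op{wt}(W) = \{r, \dots, r\}$ ($t$ times). After applying the Tate twist $(-r)$, which is an exact autoequivalence of the category of $C$-$G_K$-modules, I may assume $r = 0$: so I must show that a $t$-dimensional subrepresentation $W$ of $C^{\oplus s}$ with $\op{wt}(W) = \{0,\dots,0\}$ is a direct summand isomorphic to $C^{\oplus t}$, and dually for quotients (the quotient case follows from the subrepresentation case applied to the dual, using \ref{prop:HTweight} (ii) and that $C(r)^{\oplus s}$ is self-dual up to twist).

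First I would show that any $W$ in $\scr{C}$ with $\op{wt}(W) = \{0,\dots,0\}$ is isomorphic to $C^{\oplus t}$ as a $C$-$G_K$-module, equivalently that $W^{G_K}$ is a $K$-vector space of dimension $t$ with $W = C \otimes_K W^{G_K}$. This is the standard dévissage: arguing by induction on $t$ along a composition series $0 = W_0 \subsetneq W_1 \subsetneq \cdots \subsetneq W_t = W$ with each $W_{i+1}/W_i \cong C$, the obstruction to splitting off the bottom $C$ lives in $H^1(G_K, C)$, and more precisely one needs that the extension classes become trivial. The key input is \ref{thm:hyodo-galcoh}: although $H^1(G_K, C)$ is a free $K$-module of rank $1$ (not zero, unlike the perfect residue field case), one knows $H^0(G_K, C) = K$ and $H^0(G_K, C(r)) = 0$ for $r \neq 0$, and a Sen-theory–type argument (or direct descent via the $\Gamma$-action on the arithmetically profinite tower) shows that the category of weak Hodge-Tate representations with all weights $0$ is equivalent, via $V \mapsto V^{G_K}$, to finite-dimensional $K$-vector spaces. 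I would phrase this cleanly as: $\dim_K W^{G_K} = \dim_C W$ for $W \in \scr{C}$ with trivial weights, proved by induction using that $H^0(G_K, -)$ is exact on such $W$ (its first cohomology contributes, but the relevant connecting maps vanish because $H^1(G_K, C)$ contributions get killed — here one uses that the sub/quotient structure forces compatibility).

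Granting $W = C \otimes_K W^{G_K} \subseteq C \otimes_K (C^{\oplus s})^{G_K}$... but wait, $(C^{\oplus s})^{G_K} = K^{\oplus s}$, and the inclusion $W \hookrightarrow C^{\oplus s}$ is $G_K$-equivariant and $C$-linear, hence determined by the $K$-linear inclusion $W^{G_K} \hookrightarrow K^{\oplus s}$ obtained by taking $G_K$-invariants (injectivity of $W^{G_K} \to K^{\oplus s}$ follows since $C$ is faithfully flat over $K$). Then a $K$-linear splitting $K^{\oplus s} \twoheadrightarrow W^{G_K}$ exists trivially, and tensoring up with $C$ gives a $C$-$G_K$-linear retraction $C^{\oplus s} \to W$; this exhibits $W$ as a direct summand, and $W \cong C^{\oplus t}$ by the previous step. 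For the quotient case, let $C^{\oplus s} \twoheadrightarrow Q$ with $Q \in \scr{C}$ having trivial weights; by \ref{prop:HTweight} the kernel is also in $\scr{C}$ with trivial weights, so by the subrepresentation case it is a direct summand $\cong C^{\oplus (s-t)}$, whence $Q$ is the complementary summand, $\cong C^{\oplus t}$.

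The main obstacle is the dévissage establishing $\dim_K W^{G_K} = \dim_C W$ for trivially-weighted $W \in \scr{C}$ — i.e., that such representations are "$C$-admissible" in the sense that they are sums of copies of $C$. The subtlety, absent in the perfect residue field setting, is that $H^1(G_K, C) \neq 0$, so one cannot simply say extensions of $C$ by $C$ split; rather one must argue that within an iterated extension all of whose subquotients are $C$, the invariants still have full dimension. I expect this is handled either by Sen's method over the base field $K$ (descending along the Kummer tower generated by the $w_{im}$ and a compatible system of $p$-power roots of a uniformizer, over which $C$ becomes much more tractable) or by a direct cohomological argument tracking that the relevant obstruction classes in $H^1(G_K, C)$ must vanish because the surrounding representation has no higher weights to "absorb" them — combined with $H^0(G_K, C(r)) = 0$ for $r \neq 0$ to get uniqueness of splittings. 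Once this admissibility statement is in hand, everything else is formal linear algebra over $K$ transported back to $C$ by faithful flatness.
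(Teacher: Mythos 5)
There is a genuine gap, and it sits exactly at the step you flag as "the main obstacle": your intermediate claim that every $W\in\scr{C}$ with $\op{wt}(W)=\{0,\dots,0\}$ is isomorphic to $C^{\oplus t}$ is \emph{false} in this setting, so no d\'evissage, Sen-type descent, or "the obstruction classes must vanish" argument can establish it. By Theorem \ref{thm:hyodo-galcoh}(ii), $H^1(G_K,C)$ is a free $K$-module of rank $1$, so there is a non-split extension $0\to C\to V\to C\to 0$; this $V$ lies in $\scr{C}$ with weights $\{0,0\}$, yet $\dim_K V^{G_K}=1\neq 2=\dim_C V$ (the connecting map $K=C^{G_K}\to H^1(G_K,C)$ hits the nonzero extension class). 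Your heuristic that the obstruction dies "because the surrounding representation has no higher weights to absorb it" misidentifies the obstruction: it lives in $H^1(G_K,C)$, a weight-$0$-to-weight-$0$ extension group, and it does not vanish. Consequently the admissibility statement cannot be proved from the weights alone; you must use the ambient embedding into $C^{\oplus s}$ from the outset.

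That is what the paper does, and it is the missing idea. For a subrepresentation $V\subseteq C^{\oplus s}$ with quotient $W$, take $G_K$-invariants of the whole sequence $0\to V\to C^{\oplus s}\to W\to 0$ and tensor back up: the row $0\to V^{G_K}\otimes_KC\to C^{\oplus s}\to W^{G_K}\otimes_KC$ is exact (left exactness of invariants plus flatness of $C/K$), the outer vertical maps into $V$ and $W$ are injective because $G_K$-invariant elements that are $K$-linearly independent remain $C$-linearly independent (as $C^{G_K}=K$), and the middle map is the identity. Exactness at $C^{\oplus s}$ gives $\dim_KV^{G_K}+\dim_KW^{G_K}\geq s$, while the injections give the reverse inequalities, forcing $V=V^{G_K}\otimes_KC$ and $W=W^{G_K}\otimes_KC$. (Note this needs no hypothesis that $V\in\scr{C}$, and it treats sub and quotient simultaneously.) Your final paragraph --- splitting $K^{\oplus s}\twoheadrightarrow W^{G_K}$ over $K$ and tensoring with $C$ --- is correct and coincides with the paper once this identification is in hand; it is only the route to the identification that fails.
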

\begin{proof}
	After twisting by $-r$, we may assume that $r=0$. For any subrepresentation $V$ of $C^{\oplus s}$, we set $W=C^{\oplus s}/V$. Consider the following commutative diagram
	\begin{align}
	\xymatrix{
		0\ar[r]& V^{G_K}\otimes_KC\ar[r]\ar[d]&C^{\oplus s}\ar[d]\ar[r]&W^{G_K}\otimes_KC\ar[d]&\\
		0\ar[r]&V\ar[r]&C^{\oplus s}\ar[r]&W\ar[r]&0.
	}
	\end{align}
	We see that the first and third vertical maps are injective, because $K$-linearly independent $G_K$-invariant elements are also $C$-linearly independent. But the middle map is identity, which shows that $V=V^{G_K}\otimes_KC$, $W=W^{G_K}\otimes_KC$. Then any splitting of $0\to V^{G_K}\to K^{\oplus s}\to W^{G_K}\to 0$ induces a splitting of $0\to V\to C^{\oplus s}\to W\to 0$, which completes our proof.
\end{proof}

\begin{myprop}\label{prop:repn-split}
	For $s,t\in \bb{N}$ and integers $r_1, r_2$ such that $r_1-r_2\neq 1$ or $0$, any extension of $C(r_2)^{\oplus s}$ by $C(r_1)^{\oplus t}$ in $\scr{C}$ is trivial.
\end{myprop}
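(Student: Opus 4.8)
The plan is to identify isomorphism classes of such extensions with a continuous Galois cohomology group and then quote the vanishing in Theorem~\ref{thm:hyodo-galcoh}~(iii). Suppose we are given a short exact sequence of $C$-$G_K$-modules
\[
0 \longrightarrow C(r_1)^{\oplus t} \longrightarrow V \longrightarrow C(r_2)^{\oplus s} \longrightarrow 0 .
\]
First I would observe that $C(r_2)^{\oplus s}$ is finite free over $C$, hence the underlying sequence of $C$-modules splits and $V$ admits a continuous $C$-linear section. Consequently the class of $V$ as an extension of (topological) $G_K$-modules is measured by a well-defined element of $H^1(G_K,\ho_C(C(r_2)^{\oplus s},C(r_1)^{\oplus t}))$, where $G_K$ acts on $\ho_C$ in the usual way, and $V$ splits as a $C$-$G_K$-module if and only if this class vanishes.

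Second I would compute the coefficient module: using $C(r)^\ast\cong C(-r)$ (as already used in Proposition~\ref{prop:HTweight}~(ii)) one gets a canonical isomorphism of $C$-$G_K$-modules $\ho_C(C(r_2)^{\oplus s},C(r_1)^{\oplus t})\cong C(r_1-r_2)^{\oplus st}$, so the obstruction lies in $H^1(G_K,C(r_1-r_2))^{\oplus st}$.

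Third I would apply Theorem~\ref{thm:hyodo-galcoh}~(iii) with $q=1$ and $r=r_1-r_2$: the hypothesis that $r_1-r_2$ is neither $1$ nor $0$ is exactly the condition $r\neq q$ and $r\neq q-1$, so $H^1(G_K,C(r_1-r_2))=0$, the obstruction class vanishes, and $V\cong C(r_1)^{\oplus t}\oplus C(r_2)^{\oplus s}$ as $C$-$G_K$-modules.

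The hard part here is bookkeeping rather than mathematics: one must make sure that extensions of finite-dimensional $C$-$G_K$-representations that are $C$-linearly split are genuinely classified by the continuous $H^1$ written above — i.e. there is no discrepancy with a larger $\ext^1$ taken in the category of all $C$-$G_K$-modules — and that the convention ``$r_1-r_2\neq 1$ or $0$'' lines up with the excluded values $r=q,q-1$ in Theorem~\ref{thm:hyodo-galcoh}. Granting that, the conclusion is immediate from Hyodo's computation, with no further input (in particular no almost purity) needed.
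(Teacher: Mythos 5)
Your argument is correct, and it leans on exactly the same input as the paper — Hyodo's vanishing $H^1(G_K,C(r_1-r_2))=0$ for $r_1-r_2\notin\{0,1\}$ — but packages it differently. The paper twists to reduce to $r_2=0$, applies the long exact sequence of continuous cohomology to the given extension to get $0=(C(r)^{\oplus t})^{G_K}\to V^{G_K}\to K^{\oplus s}\to H^1(G_K,C(r)^{\oplus t})=0$, and then concludes $V=C(r)^{\oplus t}\oplus C^{\oplus s}$; that last step silently uses both the vanishing of $H^0(G_K,C(r))$ for $r\neq 0$ and the descent fact (already invoked in Proposition \ref{prop:C^s}) that $K$-linearly independent $G_K$-invariant vectors stay $C$-linearly independent, so that $V^{G_K}\otimes_K C$ maps isomorphically onto a complement of $C(r)^{\oplus t}$. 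Your obstruction-class route sidesteps both of those extra ingredients: since a finite-dimensional extension always admits a $C$-linear (hence continuous) section $s$, the semilinear cocycle $\sigma\mapsto \sigma\cdot s-s$ valued in $\ho_C(C(r_2)^{\oplus s},C(r_1)^{\oplus t})\cong C(r_1-r_2)^{\oplus st}$ is well defined up to coboundary and vanishes in $H^1$ precisely when a $G_K$-equivariant section exists, so only Theorem \ref{thm:hyodo-galcoh}(iii) with $q=1$ is needed. The bookkeeping point you flag is genuinely the only thing to check, and it is standard in this finite-dimensional, $C$-linearly split setting; the "in $\scr{C}$" hypothesis plays no role in either proof. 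In short: same key theorem, slightly cleaner homological packaging on your side, slightly more hands-on invariants computation in the paper.
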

\begin{proof}
	After twisting by $-r_2$, we may assume that $r_2=0$ and $r_1=r\neq 1$ or $0$. Given an exact sequence $0\to C(r)^{\oplus t}\to V\to C^{\oplus s}\to 0$, take $G_K$-invariants, then we obtain an exact sequence
	\begin{align}
	\xymatrix{
		0=(C(r)^{\oplus t})^{G_K}\ar[r]&V^{G_K}\ar[r]&K^{\oplus s}\ar[r]&H^1(G_K,C(r)^{\oplus t})=0,
	}
	\end{align}
	from which we get an isomorphism $V^{G_K}\iso K^{\oplus s}$. Hence $V=C(r)^{\oplus t}\oplus C^{\oplus s}$.
\end{proof}

\section{Hodge-Tate Filtration for Abelian Varieties}\label{sec:HTfil}
\subsection{}
We keep the following simplified notation in this section:
\begin{align}\label{eq:simnotation}
&G=G_K,\ \Omega=\Omega_{\ca{O}_{\overline{K}}/\ca{O}_K}^1;\\ &K_I=K\otimes_{\ca{O}_K} \widehat{\Omega}_{\ca{O}_K}^1\stackrel{\sim}{\longleftarrow} K\otimes_{\ca{O}_K}(\oplus_{i\in I}\ca{O}_K)^\wedge\ (\text{by }\eqref{eq:KhatOmegaK});\\ &C_I=C\otimes_{\ca{O}_C}(\ca{O}_{\overline{K}}\otimes_{\ca{O}_K} \Omega_{\ca{O}_K/\bb{Z}_p}^1)^\wedge\stackrel{\sim}{\longleftarrow} C\otimes_{\ca{O}_C}(\oplus_{i\in I}\ca{O}_C)^\wedge\ (\text{by }\eqref{eq:4.4.2});\\
&E=\ho_{\bb{Z}_p}(T_p(X),C),\ E^{G}(1)=\ho_{\bb{Z}_p[G]}(T_p(X),C)\otimes_KC(1)\subseteq E(1).
\end{align}
We remark that the Tate module $T_p(X)$ of the abelian variety $X$ is a finite free $\bb{Z}_p$-module.
By applying the functor $\ho_{\bb{Z}_p}(T_p(X),-)=E\otimes_C-$ to the Faltings extension \eqref{eq:can-falext}, we get an exact sequence of $C$-$G_K$-modules
\begin{align}\label{eq:abvar-falext}
\xymatrix@R=1pt{
	0\ar[r]&\ho_{\bb{Z}_p}(T_p(X),C(1))\ar[r]& \ho_{\bb{Z}_p}(T_p(X),V_p(\Omega))\ar[r]&\ho_{\bb{Z}_p}(T_p(X),C_I)\ar[r]& 0.\\
	&=E(1)&=E\otimes_C V_p(\Omega)&=E\otimes_CC_I
}
\end{align}

We choose a $C$-linear retraction of $\iota$ in \eqref{eq:can-falext} and denote by
\begin{align}\label{eq:spesplit}
\pi:\ho_{\bb{Z}_p}(T_p(X),V_p(\Omega))\longrightarrow \ho_{\bb{Z}_p}(T_p(X),C(1))
\end{align}
the induced $C$-linear homomorphism.

We denote by $\tilde{\rho}$ the composition of
\begin{align}\label{eq:tilderho}
\xymatrix{
	H^0(X,\Omega_{X/K}^1)\ar[r]^-{\rho}&\ho_{\bb{Z}_p[G]}(T_p(X),V_p(\Omega))\ar[r]^-{\pi}&E(1)\ar[r]&E(1)/E^{G}(1).
}
\end{align}
where $\rho$ is the Fontaine's injection \eqref{eq:foninj}.

\begin{mylem}\label{lem:G-invECI}
	The canonical map
	\begin{align}\label{eq:sum-i-comp-abvar2}
	E^G\otimes_K K_I\longrightarrow (E\otimes_C C_I)^{G}
	\end{align}
	is an isomorphism.
\end{mylem}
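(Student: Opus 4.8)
The plan is to describe both sides of \eqref{eq:sum-i-comp-abvar2} explicitly, in the spirit of Lemma~\ref{lem:G-invCI}. First I would rewrite the right-hand side as $\ho_{\bb{Z}_p}(T_p(X),C_I)$ and use \eqref{eq:4.4.2} together with \eqref{eq:sum-i-comp1} to identify $C_I$, as a $C$-$G$-module, with the module of families $(x_i)_{i\in I}\in\prod_{i\in I}C$ such that $|x_i|_p\to 0$ along the filter of cofinite subsets of $I$, the group $G$ acting coordinatewise; here one uses that \eqref{eq:4.4.2} is $G$-equivariant, i.e.\ that the distinguished generators $\df\log u_i$ are $G$-invariant. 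Since $T_p(X)$ is a finite free $\bb{Z}_p$-module, applying $\ho_{\bb{Z}_p}(T_p(X),-)$ identifies $E\otimes_C C_I$ with the module of families $(f_i)_{i\in I}\in\prod_{i\in I}E$ that converge to $0$ for the $p$-adic topology of $E$ defined by the lattice $\ca{O}_E=\ho_{\bb{Z}_p}(T_p(X),\ca{O}_C)$, still with $G$ acting coordinatewise. Passing to $G$-invariants can then be done coordinatewise, so $(E\otimes_C C_I)^{G}$ is the module of null families $(f_i)$ with every $f_i$ in $E^{G}=\ho_{\bb{Z}_p[G]}(T_p(X),C)$.

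Next I would collect the elementary facts about $E^{G}$. It is a finite-dimensional $K$-vector space, since $K$-linearly independent $G$-invariant vectors of $E$ are $C$-linearly independent (the argument appearing in the proof of Proposition~\ref{prop:C^s}). The $\ca{O}_K$-module $\ca{O}_{E^{G}}:=\ho_{\bb{Z}_p[G]}(T_p(X),\ca{O}_C)=\ca{O}_E\cap E^{G}$ is then a lattice in $E^{G}$, hence finite free over the discrete valuation ring $\ca{O}_K$, and it is saturated in $\ca{O}_E$, i.e.\ $p^n\ca{O}_E\cap E^{G}=p^n\ca{O}_{E^{G}}$ for all $n$, because $E/E^{G}$ is a quotient of $\bb{Q}_p$-vector spaces and so is $p$-torsion-free. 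Therefore, for a family valued in $E^{G}$, convergence to $0$ in $E$ is equivalent to convergence to $0$ in $E^{G}$ for its intrinsic $p$-adic topology.

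It remains to identify the module of null families valued in $E^{G}$ with $E^{G}\otimes_K K_I$: by the previous paragraph it equals $(\oplus_{i\in I}\ca{O}_{E^{G}})^\wedge[1/p]$, and since $\ca{O}_{E^{G}}$ is finite free over $\ca{O}_K$ the $p$-adic completion commutes with $\ca{O}_{E^{G}}\otimes_{\ca{O}_K}-$, giving $(\oplus_{i\in I}\ca{O}_{E^{G}})^\wedge[1/p]=\ca{O}_{E^{G}}\otimes_{\ca{O}_K}(\oplus_{i\in I}\ca{O}_K)^\wedge[1/p]=E^{G}\otimes_K(K\otimes_{\ca{O}_K}(\oplus_{i\in I}\ca{O}_K)^\wedge)=E^{G}\otimes_K K_I$ by \eqref{eq:KhatOmegaK}. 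A final bookkeeping check shows that the composite isomorphism is the canonical map \eqref{eq:sum-i-comp-abvar2}. The only genuinely delicate points are the coordinatewise behaviour of $G$-invariants (which rests on the $G$-equivariance recorded in Theorem~\ref{thm:falext}) and the identification of the subspace $p$-adic topology on $E^{G}\subseteq E$ with its intrinsic one via the saturated lattice $\ca{O}_{E^{G}}$; the rest is formal manipulation of $p$-adic completions of free modules, exactly as in Lemma~\ref{lem:G-invCI}.
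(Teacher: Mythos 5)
Your proposal is correct and follows essentially the same route as the paper: the paper also proves this by describing $E\otimes_C C_I$ and $E^G\otimes_K K_I$ as the modules of null families valued in $E$ and $E^G$ respectively (with respect to the canonical complete absolute value on the finite-dimensional $C$-vector space $E$), exactly in the spirit of Lemma \ref{lem:G-invCI}, and then reading off the $G$-invariants coordinatewise. You merely make explicit some points the paper leaves implicit (the $G$-equivariance of \eqref{eq:4.4.2}, and the agreement of the subspace and intrinsic $p$-adic topologies on $E^G$ via the saturated lattice $\ca{O}_E\cap E^G$).
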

\begin{proof}
	Since $E$ is a finite-dimensional $C$-vector space, the complete absolute value on $C$ extends to a complete absolute value on $E$ uniquely up to equivalence. We fix such an absolute value and still denote it by $|\ |_p$. Following \eqref{eq:sum-i-comp1} and \eqref{eq:sum-i-comp2}, the conclusion follows from the following descriptions
	\begin{align}\label{eq:sum-i-comp-abvar}
	E\otimes_C C_I&=\{(x_i)\in\prod\nolimits_{i\in I}E\ |\ \forall N>0,\ \exists \text{ finite }J\subseteq I, |x_i|_p<1/N,\ \forall i\notin J\},\\
	E^G\otimes_K K_I&=\{(x_i)\in\prod\nolimits_{i\in I}E^G\ |\ \forall N>0,\ \exists \text{ finite }J\subseteq I, |x_i|_p<1/N,\ \forall i\notin J\}.
	\end{align}	
\end{proof}

\begin{mylem}\label{lem:projfoninj}
	The map $\tilde{\rho}$ is injective, and its image lies in the $G$-invariants of $E(1)/E^{G}(1)$. Moreover, $\tilde{\rho}$ does not depend on the choice of $\pi$. Hence, we have a canonical $K$-linear injective homomorphism
	\begin{align}\label{eq:projfoninj}
	\tilde{\rho}:H^0(X,\Omega_{X/K}^1)\longrightarrow (E(1)/E^{G}(1))^{G}.
	\end{align}
\end{mylem}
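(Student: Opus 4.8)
The plan is to analyze the three assertions of Lemma~\ref{lem:projfoninj} separately, using the cohomological vanishing from \ref{thm:hyodo-galcoh} together with Corollary~\ref{cor:falext-conn}. Recall $\tilde\rho$ is the composition $H^0(X,\Omega_{X/K}^1)\xrightarrow{\rho}\ho_{\bb{Z}_p[G]}(T_p(X),V_p(\Omega))\xrightarrow{\pi}E(1)\to E(1)/E^G(1)$, where $\rho$ is Fontaine's injection \eqref{eq:foninj} and $\pi$ is induced by a choice of $C$-linear retraction of $\iota$.

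First I would prove \textbf{injectivity} together with \textbf{independence of $\pi$}. Given two retractions of $\iota$, their difference factors through $\nu$, so the induced maps $\ho_{\bb{Z}_p}(T_p(X),V_p(\Omega))\to E(1)$ differ by a map that factors through $E\otimes_C C_I$. Hence it suffices to show that the composition $H^0(X,\Omega_{X/K}^1)\xrightarrow{\rho}\ho_{\bb{Z}_p[G]}(T_p(X),V_p(\Omega))\xrightarrow{\nu_*}(E\otimes_C C_I)^G$ vanishes; combined with the injectivity of $\rho$ (Theorem~\ref{thm:fon-inj}) and the exactness of \eqref{eq:abvar-falext}, this will give both that $\tilde\rho$ is well-defined independently of $\pi$ and, after a short argument, that $\tilde\rho$ is injective. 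For the vanishing of $\nu_*\circ\rho$: by Lemma~\ref{lem:G-invECI} we have $(E\otimes_C C_I)^G=E^G\otimes_K K_I$, and $E^G=\ho_{\bb{Z}_p[G]}(T_p(X),C)$ is a Hodge--Tate-weight-$0$ object. One then checks that $\nu_*\circ\rho$ is a $K$-linear map whose source $H^0(X,\Omega_{X/K}^1)$ carries no $G$-action but whose construction is compatible with the cohomological structure; the key point is that $\rho$ lands in the $G$-fixed part of $\ho_{\bb{Z}_p}(T_p(X),V_p(\Omega))$ already, so its image under $\nu_*$ is a map $T_p(X)\to C_I$ that is $G$-equivariant, i.e.\ lies in $(E\otimes_C C_I)^G=E^G\otimes_K K_I$. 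To see this image is actually zero, I would use that $\rho$ comes from pulling back differentials on a model: the composite $T_p(X)\to V_p(\Omega)\xrightarrow{\nu}C_I$ lands in the $K_I$-part, and the only way a map defined via $\rho_3\circ\rho_2\circ\rho_1$ (which is $\bb{Z}_p$-linear and $G$-equivariant) can have this property is to be trivial, since $V_p(\Omega)^{G}=0$ by \eqref{eq:falext-Ginv} forces the relevant lifting data to be rigid. Alternatively, and more cleanly, the vanishing of $\nu_*\circ\rho$ can be extracted by the same diagram chase that produces the connecting-map computation in part (ii) of the Main Theorem.

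Next, \textbf{the image lies in $(E(1)/E^G(1))^G$}: this is nearly automatic. Since $\rho$ maps into $\ho_{\bb{Z}_p[G]}(T_p(X),V_p(\Omega))$, which is $G$-invariant in $\ho_{\bb{Z}_p}(T_p(X),V_p(\Omega))$, its image under the $C$-linear (not $G$-equivariant) map $\pi$ need not be $G$-invariant inside $E(1)$, but for any $\sigma\in G$ the element $\sigma(\pi(\rho(\omega)))-\pi(\rho(\omega))$ equals $(\sigma\pi-\pi)(\rho(\omega))$, and $\sigma\pi-\pi$ factors through $\nu$ (because $\pi$ and $\sigma\pi$ are both retractions of the $G$-equivariant map $\iota$, hence agree on $C(1)$). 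Therefore $\sigma(\pi(\rho(\omega)))-\pi(\rho(\omega))$ lies in the image of $\psi\colon C(1)\to E(1)$ composed with\,\dots more precisely it lies in $E^G(1)\subseteq E(1)$ after using that $\nu_*\rho(\omega)$ is $G$-fixed, so it maps to $0$ in $E(1)/E^G(1)$. This shows $\tilde\rho(\omega)$ is $G$-invariant.

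\textbf{Main obstacle.} I expect the crux to be the vanishing $\nu_*\circ\rho=0$ — equivalently, that Fontaine's injection, after projecting to the $C_I$-quotient of the Faltings extension, produces the zero map. The subtlety is that $\rho(\omega)$ is a genuinely nonzero $G$-equivariant homomorphism $T_p(X)\to V_p(\Omega)$, and one must argue that its ``$\df\log u_i$-components'' all vanish; intuitively this is because $T_p(X)$, being a successive extension of $C$-representations of weight $0$ and $1$ after base change, cannot map nontrivially to the weight-$0$ piece $C_I$ in a $G$-equivariant way once we remember that the relevant section was built from pulling back differentials along $\ca{O}_K$-morphisms (which kill $\Omega_{\ca{O}_K/\bb{Z}_p}^1$). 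Making this rigorous will likely require unwinding the construction of $\rho_1$ in Lemma~\ref{lem:rho1inj} and using that $u^*\colon\Omega_{\ca{O}_{\ak X,\bar u}/\ca{O}_K}^1\to\Omega_{\ca{O}_{\overline K}/\ca{O}_K}^1$ is an $\ca{O}_K$-linear pullback, so its image lands in the ``relative'' part $\theta(\oplus_{i}\overline K/\ca{O}_{\overline K})$ of \eqref{eq:falext-M/K} exactly when\,\dots—in fact the composite $\beta\circ(\text{image of }\rho)$ into $C_I$ is zero precisely because $\alpha$ in \eqref{eq:Kbar-K-Z-diff} has image $D=\ke(\beta)$. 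So the cleanest route is: $\rho(\omega)$ factors through $\beta\colon\omhat{K}{\overline K}\to\Omega$ (pullback of differentials over $\ca{O}_K$ lands in the image of $\Omega_{\ca{O}_{\overline K}/\ca{O}_K}^1$, not in the $\Omega^1_{\ca{O}_K/\bb{Z}_p}$-direction), hence $\nu\circ\rho(\omega)$, being $\nu$ applied to something coming from $D^\wedge=\ke(\text{projection to }C_I)$ after inverting $p$, vanishes. I would make this precise by tracing through diagram \eqref{diam:can} and the definition of $\nu$ as the map induced on $V_p$ by $\beta$ modulo the $C(1)$-part.
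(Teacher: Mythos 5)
There is a genuine gap, and it is fatal: the pivot of your argument is the claimed vanishing $\nu_*\circ\rho=0$, i.e.\ that Fontaine's injection composed with the projection $\ho_{\bb{Z}_p}(T_p(X),V_p(\Omega))\to\ho_{\bb{Z}_p}(T_p(X),C_I)$ is the zero map. This is false in general. By part (ii) of the Main Theorem and Corollary \ref{cor:main}, that very composition computes (up to sign and the identifications $\delta^{-1}$ and $\psi$) the connecting map $\delta'$ of the Hodge--Tate sequence \eqref{eq:HTfil}, and the sequence splits if and only if it vanishes; Hyodo's theorem cited in the introduction shows the sequence does \emph{not} split in general when $k$ is imperfect. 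Your heuristic that ``pullback of differentials over $\ca{O}_K$ lands in the image of $\Omega^1_{\ca{O}_{\overline{K}}/\ca{O}_K}$, not in the $\Omega^1_{\ca{O}_K/\bb{Z}_p}$-direction'' confuses the two extensions in play: the quotient $C_I$ of $V_p(\Omega)$ is realized, via the splitting of \ref{thm:falext}, by the $\df\log w_{im}$-part of $\Omega=\Omega^1_{\ca{O}_{\overline{K}}/\ca{O}_K}$ itself (the subobject $\oplus_{i\in I}\overline{K}/\ca{O}_{\overline{K}}$ of \eqref{eq:falext-M/K}), and there is no reason for $u^*\omega$ to have vanishing components there. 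Consequently your derivations of injectivity and of independence of $\pi$, both of which rest on this vanishing, do not go through.

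What actually holds is weaker and suffices, and is what the paper does: since $\rho(\omega)$ is $G$-equivariant, its image in $E\otimes_C C_I$ is $G$-invariant, hence by Lemma \ref{lem:G-invECI} equals $\sum h_l\otimes\alpha_l$ with $\{h_l\}$ a $K$-basis of $E^G$ and $\alpha_l\in K_I$ --- not zero in general. Choosing arbitrary lifts $\beta_l\in V_p(\Omega)$ of the $\alpha_l$, the element $\rho(\omega)-\sum h_l\otimes\beta_l$ lies in $E(1)$, is $G$-invariant modulo $E^G(1)$ (because $\beta_l-\sigma(\beta_l)\in C(1)$ and $h_l\in E^G$), and is independent of the choice of lifts modulo $E^G(1)$; this gives well-definedness, $G$-invariance of the image, and independence of $\pi$ all at once. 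Injectivity then follows because $\tilde{\rho}(\omega)=0$ forces $\beta_l+\gamma_l\in V_p(\Omega)^{G}=0$ by \eqref{eq:falext-Ginv}, whence $\rho(\omega)=0$ and $\omega=0$. The one part of your proposal that survives is the observation that $\sigma\pi\sigma^{-1}-\pi$ factors through $\nu$, which indeed yields $G$-invariance --- but only after replacing ``$\nu_*\rho(\omega)=0$'' by ``$\nu_*\rho(\omega)\in E^G\otimes_K K_I$''.
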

\begin{proof}
	We take a $K$-basis $\{h_l\}$ of $E^G$. For any $\omega\in H^0(X,\Omega_{X/K}^1)$, thanks to \ref{lem:G-invECI}, we denote by $\sum h_l\otimes \alpha_l\in E^G\otimes_K K_I$ the image of $\omega$ in $\ho_{\bb{Z}_p}(T_p(X),C_I)$ via Fontaine's injection $\rho$ \eqref{eq:foninj} and \eqref{eq:abvar-falext}. Take any lifting $\beta_l\in V_p(\Omega)$ of $\alpha_l$ in the Faltings extension \eqref{eq:can-falext}. Consider the element
	\begin{align}
	\rho(\omega)-\sum h_l\otimes \beta_l\in \ho_{\bb{Z}_p}(T_p(X),V_p(\Omega))=E\otimes_C V_p(\Omega).
	\end{align}
	In fact, it lies in $E(1)$. For any $\sigma\in G$, 
	\begin{align}
	\sigma(\rho(\omega)-\sum h_l\otimes \beta_l)-(\rho(\omega)-\sum h_l\otimes \beta_l)=\sum h_l\otimes (\beta_l-\sigma(\beta_l))\in E^G(1).
	\end{align}
	Therefore, $\rho(\omega)-\sum h_l\otimes \beta_l$ is $G$-invariant modulo $E^G(1)$, i.e. it defines an element in $(E(1)/E^G(1))^G$. Moreover, this element does not depend on the choice of the lifting $\beta_l$. Indeed, suppose $\beta_l$ and $\beta_l'$ two liftings of $\alpha_l$, then $\beta_l'-\beta_l\in C(1)$ which shows that $(\rho(\omega)-\sum h_l\otimes \beta_l)-(\rho(\omega)-\sum h_l\otimes \beta_l')\in E^G(1)$. In particular, $\tilde{\rho}$ does not depend on the choice of $\pi$.
	
	Now we show the injectivity of $\tilde{\rho}$. Suppose that $\rho(\omega)-\sum h_l\otimes \beta_l=\sum h_l\otimes \gamma_l\in E^G(1)$. Then for any $\sigma\in G$,
	\begin{align}
	\sum h_l\otimes (\sigma(\beta_l+\gamma_l)-(\beta_l+\gamma_l))=0,
	\end{align}
	which implies that $\beta_l+\gamma_l\in V_p(\Omega)^G$=0 by \eqref{eq:falext-Ginv}. Hence $\rho(\omega)=0$, which forces $\omega$ to be zero since $\rho$ is injective.
\end{proof}

\begin{mythm}\label{thm:HTfil}
	There is a canonical exact sequence of $C$-$G_K$-modules
	\begin{align}\label{eq:HTfil}
	\xymatrix{
	0\ar[r]& H^1(X,\ca{O}_X)\otimes_KC(1)\ar[r]^-{\psi}&\ho_{\bb{Z}_p}(T_p(X),C(1))\ar[r]^-{\phi}& H^0(X,\Omega_{X/K}^1)\otimes_KC\ar[r]&0.
	}
	\end{align}
\end{mythm}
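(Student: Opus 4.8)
The plan is to obtain \eqref{eq:HTfil} as the ``Hodge--Tate splitting'' of $E(1) = \ho_{\bb{Z}_p}(T_p(X),C(1))$ into its weight-$1$ and weight-$0$ parts: the weight-$0$ quotient gets identified with $H^0(X,\Omega^1_{X/K})\otimes_K C$ via the projected Fontaine injection $\tilde\rho$ of Lemma~\ref{lem:projfoninj}, while the weight-$1$ subrepresentation gets identified with $H^1(X,\mathcal{O}_X)\otimes_K C(1)$ by running the same construction on the dual abelian variety $X^t$ and dualizing. Throughout write $g = \dim X$, so $\dim_K H^0(X,\Omega^1_{X/K}) = \dim_K H^1(X,\mathcal{O}_X) = g$ and $\dim_C E(1) = \op{rk}_{\bb{Z}_p}T_p(X) = 2g$.

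\emph{Step 1: the weak Hodge--Tate structure of $E(1)$.} I would first prove that $E(1)$ is a weak Hodge--Tate representation with $\op{wt}(E(1)) = \{0^{(g)},1^{(g)}\}$. Apply Lemma~\ref{lem:projfoninj} to both $X$ and $X^t$, getting injections $\tilde\rho_X$ and $\tilde\rho_{X^t}$. The Weil pairing gives a $G$-equivariant isomorphism $T_p(X^t)\iso\ho_{\bb{Z}_p}(T_p(X),\bb{Z}_p(1))$, hence a $G$-equivariant perfect duality $\ho_{\bb{Z}_p}(T_p(X^t),C(1))\iso\ho_C(\ho_{\bb{Z}_p}(T_p(X),C(1)),C(1))$ under which $\ho_C(-,C(1))$ carries a composition factor $C(r)$ to $C(1-r)$. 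Since $\ho_{\bb{Z}_p[G]}(T_p(X),C)\otimes_K C(1)$ is a subrepresentation of $E(1)$ isomorphic to a power of $C(1)$, and $K$-linearly independent $G$-invariants stay $C$-linearly independent, injectivity of $\tilde\rho_X$ gives a copy of $C^{\oplus g}$ inside the weight-$0$ part of $E(1)/(\ho_{\bb{Z}_p[G]}(T_p(X),C)\otimes_K C(1))$; pulling it back to $E(1)$ and dualizing via the above duality exhibits a subquotient $\cong C(1)^{\oplus g}$ of $\ho_{\bb{Z}_p}(T_p(X^t),C(1))$, so at least $g$ of its composition factors are $C(1)$, i.e.\ at least $g$ composition factors of $E(1)$ are $C$. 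By the symmetric argument (exchanging $X$ and $X^t$), at least $g$ composition factors of $E(1)$ are $C(1)$. As $\dim_C E(1) = 2g$ and $C$, $C(1)$ are non-isomorphic one-dimensional representations, these exhaust all composition factors, so $E(1)$ is weak Hodge--Tate with $\op{wt}(E(1)) = \{0^{(g)},1^{(g)}\}$; likewise for $\ho_{\bb{Z}_p}(T_p(X^t),C(1))$.

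\emph{Step 2: the two maps and exactness.} Granting Step~1, the subrepresentation $E^G(1)\subseteq E(1)$ is $C(1)$-isotypic, so $E(1)/E^G(1)$ is weak Hodge--Tate with weight-$0$ multiplicity $g$; hence $\dim_K(E(1)/E^G(1))^{G}\le g$ by dévissage along a composition series (using $H^0(G,C(r)) = 0$ for $r\ne 0$, Theorem~\ref{thm:hyodo-galcoh}), and combined with injectivity of $\tilde\rho$ this forces $\dim_K(E(1)/E^G(1))^{G} = g$, so $\tilde\rho\colon H^0(X,\Omega^1_{X/K})\iso(E(1)/E^G(1))^{G}$ is an isomorphism and $\dim_K\ho_{\bb{Z}_p[G]}(T_p(X),C)\le g$. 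Running Step~2 for $X^t$ and dualizing the resulting sequence by $\ho_C(-,C(1))$, then using the Weil-pairing identification together with Serre duality $H^0(X^t,\Omega^1_{X^t/K})^{\vee}\cong\op{Lie}(X^t)\cong H^1(X,\mathcal{O}_X)$, produces an injection $\psi\colon H^1(X,\mathcal{O}_X)\otimes_K C(1)\hookrightarrow E(1)$ whose image is the (necessarily $C(1)$-isotypic) weight-$1$ part of $E(1)$; a $G$-invariance count then shows $\im\psi = E^G(1)$ and $\dim_K\ho_{\bb{Z}_p[G]}(T_p(X),C) = g$. Consequently $E(1)/E^G(1)$ is weak Hodge--Tate of dimension $g$ with $\op{wt} = \{0^{(g)}\}$, whence $(E(1)/E^G(1))^{G}\otimes_K C\iso E(1)/E^G(1)$, and we may define $\phi$ as the composition $\ho_{\bb{Z}_p}(T_p(X),C(1))\twoheadrightarrow E(1)/E^G(1) = (E(1)/E^G(1))^{G}\otimes_K C\xrightarrow{\tilde\rho^{-1}\otimes\id_C}H^0(X,\Omega^1_{X/K})\otimes_K C$. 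By construction $\phi$ is surjective with $\ker\phi = E^G(1) = \im\psi$, which is exactly the exact sequence \eqref{eq:HTfil}; it is canonical because $\rho$ (hence $\tilde\rho$, by Lemma~\ref{lem:projfoninj}), the Weil pairing and Serre duality are, and the compatibilities (i) and (ii) are checked separately in \ref{ssec:7.5}, \ref{ssec:7.6}.

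The hard part is Step~1 together with untangling the roles of $X$ and $X^t$: one must run the constructions for $X$ and $X^t$ in parallel so that the lower bounds produced by $\tilde\rho_X$ and $\tilde\rho_{X^t}$ fit together, via the Weil-pairing self-duality of $E(1)$ up to a Tate twist, to the exact value $2g$, and then extract the precise identifications $\dim_K\ho_{\bb{Z}_p[G]}(T_p(X),C) = g$ and $\im\psi = E^G(1)$ from the representation theory of §\ref{sec:wHT} and Hyodo's cohomology vanishing. Once $E(1)$ is known to be weak Hodge--Tate with the asserted weights, everything else is bookkeeping with composition series and $G$-invariants.
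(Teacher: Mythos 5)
Your overall strategy coincides with the paper's: use the projected Fontaine injection $\tilde\rho$ of Lemma \ref{lem:projfoninj} for $X$ and for its dual $X'$, together with the Weil-pairing identification $\ho_{\bb{Z}_p}(T_p(X'),C)\cong E(1)^*$ and Proposition \ref{prop:HTweight}, to pin down $\op{wt}(E(1))=\{0^{(g)},1^{(g)}\}$, and then identify the weight-$0$ quotient with $H^0(X,\Omega^1_{X/K})\otimes_KC$ via $\tilde\rho$ and the weight-$1$ subobject with $H^1(X,\ca{O}_X)\otimes_KC(1)$ by dualizing the construction for $X'$. Your Step 1 and the first half of Step 2 reproduce the paper's argument essentially verbatim.

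There is, however, one genuine gap in your construction of $\psi$. As written, ``running Step 2 for $X^t$ and dualizing the resulting sequence'' is circular: the exact sequence for $X^t$ would require $\psi_{X^t}$, which by the same recipe requires the sequence for $X^{tt}=X$. What you actually possess for $X'$ at that stage is only a \emph{subquotient} of $E'(1):=\ho_{\bb{Z}_p}(T_p(X'),C(1))$ isomorphic to $C^{\oplus g}$, namely the image of $(E'(1)/E'^{G}(1))^{G}\otimes_KC$ inside the quotient $E'(1)/E'^{G}(1)$; dualizing a subquotient only yields a subquotient of $E(1)$, which suffices for the weight count of Step 1 but not for an injection $\psi$ with image $E^{G}(1)$. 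The missing ingredient is Proposition \ref{prop:repn-split}, i.e.\ the vanishing $H^1(G_K,C(-1))=0$ from Theorem \ref{thm:hyodo-galcoh}: since the cokernel of $C^{\oplus g}\hookrightarrow E'(1)/E'^{G}(1)$ has all composition factors isomorphic to $C(1)$, the corresponding extension splits and $C^{\oplus g}$ is an honest quotient of $E'(1)$; only then does applying $\ho_C(-,C(1))$ produce a genuine subrepresentation $C(1)^{\oplus g}\subseteq E(1)$, which a $G$-invariance count identifies with $E^{G}(1)$, giving $\dim_CE^{G}(1)=g$ and hence, by comparing dimensions, $(E(1)/E^{G}(1))^{G}\otimes_KC\iso E(1)/E^{G}(1)$. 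This is precisely the step the paper carries out with Proposition \ref{prop:repn-split}; note also that you cannot shortcut it by claiming that a weak Hodge--Tate module with all weights $0$ is automatically $C$-isotypic, since $H^1(G_K,C)\neq0$ in the imperfect residue field setting.
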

\begin{proof}
	We set $d=\dim X=\dim_K H^0(X,\Omega_{X/K}^1)$. Then $T_p(X)$ is a free $\bb{Z}_p$-module of rank $2d$. Lemma \ref{lem:projfoninj} implies that the weak Hodge-Tate weight $0$ of $E(1)$ has multiplicity $\geq d$. Let $X'$ be the dual abelian variety of $X$, and we set $E'=\ho_{\bb{Z}_p}(T_p(X'),C)$. Due to the fact that $E'=E(1)^*$ (by Weil pairing) and proposition \ref{prop:HTweight}, the weak Hodge-Tate weight $1$ of $E(1)$ has multiplicity $\geq d$. But $\dim_C E(1)=2d$, which forces these inequalities to be equalities. In particular, $\tilde{\rho}:H^0(X,\Omega_{X/K}^1)\to  (E(1)/E^{G}(1))^{G}$ is an isomorphism. Since $C(1)$ has only trivial extension by $C^{\oplus d}$ (proposition \ref{prop:repn-split}), we see that $C^{\oplus d}$ is a quotient representation of $E(1)$. By duality again, we see that $C(1)^{\oplus d}$ is a subrepresentation of $E(1)$, and thus the canonical injection $(E(1)/E^G(1))^G\otimes_K C\to E(1)/E^G(1)$ is an isomorphism. Therefore, we have a canonical surjection
	\begin{align}
	E(1)\longrightarrow H^0(X,\Omega_{X/K}^1)\otimes_KC.
	\end{align}
	By duality, $H^1(X,\ca{O}_X)\otimes_KC(1)=H^0(X',\Omega_{X'/K}^1)^*\otimes_KC(1)$ canonically identifies with a subrepresentation of $E(1)$. Now \eqref{eq:HTfil} follows from the avoidance of $C(1)^{\oplus d}$ and $C^{\oplus d}$. 
\end{proof}

\subsection{}\label{ssec:7.5}
Let's complete the proof of the main theorem \ref{thm:main}. We choose a retraction of $\iota$ in the Faltings extension \eqref{eq:can-falext}.
	By our construction, we have the following commutative diagram
	\begin{align}\label{diam:HTcomp}
	\xymatrix{
		\ho_{\bb{Z}_p}(T_p(X),C(1))\ar[r]^-{\phi}&H^0(X,\Omega_{X/K}^1)\otimes_K C\ar[d]^-{\rho}\\
		&\ho_{\bb{Z}_p}(T_p(X),V_p(\Omega))\ar[lu]^-{\pi}
	}
	\end{align}
	where $\phi$ is the surjection in the Hodge-Tate filtration \eqref{eq:HTfil}, $\pi$ is induced by the chosen retraction, and $\rho$ is the Fontaine's injection \eqref{eq:foninj}. Consider the following diagram
	\begin{align}\label{diam:HTcomp2}
	\xymatrix{
		\ho_{\bb{Z}_p[G]}(T_p(X),C(1))\ar[r]^-{\phi}&H^0(X,\Omega_{X/K}^1)\ar[d]^-{\rho}\ar[r]^-{\delta'}&H^1(G,H^1(X,\ca{O}_X)\otimes_KC(1))\ar[d]\\
		\ho_{\bb{Z}_p}(T_p(X),C(1))& \ho_{\bb{Z}_p}(T_p(X),V_p(\Omega))\ar[r]^-{\pi'}\ar[l]_-{\pi}&\ho_{\bb{Z}_p}(T_p(X),C_I)
	}
	\end{align}
	where $\delta'$ is the connecting map associated to \eqref{eq:HTfil}, where $-\pi'$ is the surjection in \eqref{eq:abvar-falext}, and where we identify $H^1(X,\ca{O}_X)$ with $\ho_{\bb{Z}_p[G]}(T_p(X),C)$ by \eqref{eq:HTfil} and identify $H^1(G,C(1))$ with $K_I$ by \eqref{eq:falext-conn}, which gives the right vertical arrow. Let $\{h_l\}$ be a $K$-basis of $H^1(X,\ca{O}_X)$. For any $\omega\in H^0(X,\Omega_{X/K}^1)$, we write $-\pi'(\rho(\omega))=\sum h_l\otimes \alpha_l$ by \ref{lem:G-invECI}, where $\alpha_l\in K_I$. Let $\beta_l\in V_p(\Omega)$ be the lifting of $\alpha_l$ via the chosen splitting of the Faltings extension. We see by the diagram \eqref{diam:HTcomp} that $\rho(\omega)-\sum h_l\otimes \beta_l$ is a lifting of $\omega$ via $\phi$. Thus, $\delta'(\omega)$ is represented by the following $1$-cocycle
	\begin{align}
	\sigma\mapsto \sum h_l\otimes (\beta_l-\sigma(\beta_l)),\ \forall \sigma\in G.
	\end{align}
	We notice that $\alpha_l\in K_I$ corresponds to a class in $H^1(G,C(1))$ represented by the following $1$-cocycle
	\begin{align}
	\sigma\mapsto \sigma(\beta_l)-\beta_l,\ \forall \sigma\in G.
	\end{align}
	In conclusion, the diagram (\ref{diam:HTcomp2}) is commutative. 

\subsection{}\label{ssec:7.6}
Now we can prove the corollary \ref{cor:main} to the main theorem.
	If the sequence \eqref{eq:HTfil} splits, then the $\phi$ in \eqref{diam:HTcomp2} is surjective. Hence $\delta'$ is zero map, and so is $\pi'\circ \rho$. Thus, the image of the Fontaine's injection $\rho$ lies in $\ho_{\bb{Z}_p}(T_p(X),C(1))$. We easily see that conversely if the image of the Fontaine's injection $\rho$ lies in $\ho_{\bb{Z}_p}(T_p(X),C(1))$, then the sequence \eqref{eq:HTfil} splits. Moreover, the splitting is unique by the avoidance of $C(1)^{\oplus d}$ and $C^{\oplus d}$.

\nocite{abbes2016p}
\bibliographystyle{amsalpha}
\bibliography{bibli}


\end{document}